\documentclass[11pt]{amsart}

\usepackage[T1]{fontenc}
\usepackage{lmodern}
\usepackage{microtype}
\usepackage{mathtools,amssymb,amsthm}
\usepackage{enumitem}
\usepackage{float}
\usepackage{xcolor}
\usepackage{tikz}
\usepackage{pgfplots}
\usetikzlibrary{positioning,calc,arrows.meta}
\pgfplotsset{compat=1.18}
\usepackage{aliascnt}
\usepackage{hyperref}
\usepackage[nameinlink,capitalize,noabbrev]{cleveref}
\usepackage[margin=1in]{geometry}

\hypersetup{
 colorlinks=true,
 linkcolor=blue!55!black,
 citecolor=blue!55!black,
 urlcolor=blue!55!black,
 pdftitle={Poisson laws and exterior stability for random alternating tensors},
 pdfauthor={Pakin Methawisal}
}

\newtheorem{theorem}{Theorem}[section]

\newaliascnt{proposition}{theorem}
\newtheorem{proposition}[proposition]{Proposition}
\aliascntresetthe{proposition}

\newaliascnt{lemma}{theorem}
\newtheorem{lemma}[lemma]{Lemma}
\aliascntresetthe{lemma}

\newaliascnt{corollary}{theorem}
\newtheorem{corollary}[corollary]{Corollary}
\aliascntresetthe{corollary}
\newaliascnt{conjecture}{theorem}
\newtheorem{conjecture}[conjecture]{Conjecture}
\aliascntresetthe{conjecture}

\crefname{conjecture}{Conjecture}{Conjectures}

\theoremstyle{remark}
\newaliascnt{remark}{theorem}
\newtheorem{remark}[remark]{Remark}
\aliascntresetthe{remark}

\newcommand{\F}{\mathbb F}
\newcommand{\E}{\mathbb E}
\newcommand{\Pp}{\mathbb P}
\newcommand{\Gr}{\operatorname{Gr}}
\newcommand{\Span}{\operatorname{span}}
\newcommand{\Pois}{\operatorname{Poisson}}
\newcommand{\qbinom}[2]{\genfrac{[}{]}{0pt}{}{#1}{#2}_{q}}

\title[Poisson laws for random alternating tensors]{Poisson laws and exterior stability for random alternating tensors}
\author{Pakin Methawisal}
\address{Mahidol University International College, Mahidol University,
999 Phutthamonthon 4 Rd., Salaya, Phutthamonthon,
Nakhon Pathom 73170, Thailand}
\email{pakin.met@student.mahidol.edu}
\date{}
\subjclass[2020]{15A75, 51E20, 60C05, 60F05, 60G55}
\keywords{alternating multilinear map, totally isotropic subspace, exterior algebra, finite field, Poisson convergence, Bernoulli process, Poisson point process, exterior stability}

\begin{document}

\begin{abstract}
For fixed $k\ge3$, we determine the critical law of totally isotropic $r$-spaces for a uniform random map $\Theta_N:\Lambda^k\F_q^N\to\F_q^m$. At the exact balance $m\binom rk=r(N-r)$, the entire null configuration is asymptotically an independent Bernoulli subset of $\Gr(r,\F_q^N)$ in total variation, uniformly in $q$ and $m$. Consequently, the counting measure $\Xi_r$ is asymptotically a Poisson point process, and its total mass $X_{N,r}$ is asymptotically Poisson. An exterior-rank stability theorem shows that near-extremal families decompose into Grassmann clusters with uniformly controlled span deficiency. We obtain quantitative rates and identify the first exterior-dependence scale. We also show that rare null $(r+1)$-spaces force high-order factorial-moment divergence, while the fixed-target bilinear cases $m=1,2$ exhibit non-Poisson critical behavior.
\end{abstract}

\maketitle

\section{Introduction}
Let $V_N=\F_q^N$ and choose uniformly $\Theta_N\in\operatorname{Hom}(\Lambda^kV_N,\F_q^m)$. Here $\Lambda^kV_N$ is the $k$th exterior power of $V_N$; equivalently, $\Theta_N$ is an $m$-tuple of alternating $k$-linear forms. By an $r$-space we mean an $r$-dimensional linear subspace of $V_N$. Such an $r$-space $H\le V_N$ is \emph{null} (equivalently, totally isotropic for $\Theta_N$) when $\Theta_N(\Lambda^kH)=0$. Write $\mathcal Z_{N,r}:=\{H\in\Gr(r,V_N):\Theta_N(\Lambda^kH)=0\}$ for the random null configuration, put $X_{N,r}=|\mathcal Z_{N,r}|$, and let $\alpha_N$ be the maximum null dimension. Since a fixed $r$-space is null with probability $q^{-m\binom rk}$,
\begin{equation}
 \mu_{N,r}:=\E X_{N,r}=\qbinom Nr q^{-m\binom rk},\qquad
 \log_q\mu_{N,r}=r(N-r)-m\binom rk+O(1).
 \label{eq:first-moment}
\end{equation}
The first moment already identifies the transition. Below the critical dimension the expected number of null $r$-spaces tends to infinity, suggesting that such a space should exist with high probability; above it the expectation tends to zero, so none exists with high probability. At the critical balance the expected count remains of constant order, and existence becomes random. The main result of the paper is a random-set description of this critical point.

Writing $N=r+c$, the Grassmannian contributes $r(N-r)=rc$ dimensions, while nullity imposes $m\binom rk$ linear conditions. We call the exact balance $m\binom rk=rc$ \emph{resonance}. Under this resonance condition, $c = \frac mr\binom rk \in \mathbb Z$. At resonance we prove the stronger statement that the whole random null configuration is asymptotically an independent Bernoulli subset of the Grassmannian; the scalar Poisson law is its cardinality projection.

The main obstruction to an immediate Poisson argument is dependence: distinct $r$-spaces may overlap, so their null events need not be independent. The role of the exterior-rank and stability theory is to control precisely these dependent configurations. Roughly, a dependent family either incurs a large exterior-rank penalty, making simultaneous nullity very unlikely, or it has small excess and is forced into a Grassmann-cluster structure, making such configurations sufficiently sparse to count.

For a finite ordered family $\mathcal H=(H_1,\ldots,H_t)$ of distinct $r$-spaces, write
\[
 \mathcal O:=\sum_{H\in\mathcal H}H,
 \qquad
 \mathcal E:=\sum_{H\in\mathcal H}\Lambda^kH,
 \qquad
 \beta:=\frac1r\binom rk.
\]
Here $\mathcal O$ is the ordinary sum and $\mathcal E$ is the corresponding exterior sum. For $j\ge2$, write
\[
 \mathcal O_{<j}:=\sum_{i=1}^{j-1}H_i,
 \qquad
 \mathcal E_{<j}:=\sum_{i=1}^{j-1}\Lambda^kH_i,
 \quad
 d_j:=\dim(H_j\cap\mathcal O_{<j}),
 \quad
 e_j:=\dim(\Lambda^kH_j\cap\mathcal E_{<j}),
\]
so in particular $d_2=\dim(H_1\cap H_2)$. We also write $\mathfrak O:=\dim(\mathcal O)$ and $\mathfrak E:=\dim(\mathcal E)$. 

The probability that all $H_i$ are null is $q^{-m\mathfrak E}$, so the key comparison is between $\mathfrak E$ and $\mathfrak O$. Our exterior-rank inequality shows that every non-direct family satisfies $\mathfrak E\ge\beta\bigl(\mathfrak O+1\bigr)$. At resonance this gives an extra factor $q^{-c}$ in the probability that all $H_i$ are null. When $c/r^2\to\infty$, this extra factor already dominates the number of competing configurations; we treat that case first in \cref{sec:factorial-moments}. We call it the \emph{high-target} regime. The regime in which $m$ is fixed is the \emph{fixed-target} regime. Here, $c\asymp r^{k-1}$. For $k\ge4$ this again dominates the $r^2$ configuration scale. For $k=3$ the two scales coincide, so the trilinear case requires additional structure.

Define the exterior excess by
\[
 G_k(\mathcal H)=\mathfrak E-\beta \mathfrak O,
\]
and say that $\mathcal H$ is $\Gamma$-low-excess if $G_k(\mathcal H)\le\Gamma r^{k-1}$, and $\Gamma$-high-excess otherwise.

For $r$-spaces $H,A$, write $d_{\mathrm{Gr}}(H,A)=r-\dim(H\cap A)$ for the Grassmann graph distance.

The exterior-rank inequality proved in \cref{thm:exterior-intro} gives $\mathfrak E\ge\beta\mathfrak O$, with equality exactly when the ordinary sum is direct. Thus $G_k(\mathcal H)\ge0$ measures the excess above this extremal lower bound; \cref{thm:stability-intro} gives a uniform structural description of families for which this excess is small.

Unless stated otherwise, all asymptotic statements involving $r$ are understood to hold for sufficiently large $r$. Set $C_q:=\prod_{j\ge1}(1-q^{-j})^{-1}$, a reciprocal $q$-Pochhammer constant arising from the Gaussian-binomial asymptotics.

At resonance, define
\[
\mathcal G_r:=\Gr(r,V_N), \quad p_r:=q^{-m\binom{r}{k}}=q^{-rc}, \quad \rho_{m,k}(r):=\sqrt m\,r^{(k-2)/2}, \quad \Psi_r:=\rho_{m,k}(r)\log\!\bigl(2+\rho_{m,k}(r)\bigr).
\]

Let $\mathcal B_r\subseteq\mathcal G_r$ be the Bernoulli subset obtained by retaining each element independently with probability $p_r$, and let $\Pi_r$ be the Poisson point process on $\mathcal G_r$ with intensity $p_r\sum_{H\in\mathcal G_r}\delta_H$.

Identifying subsets of $\mathcal G_r$ with their simple counting measures, set $\Xi_r:=\sum_{H\in\mathcal Z_{N,r}}\delta_H$.

For a random object $Z$, let $\mathcal L(Z)$ denote its probabilistic law and $d_{\mathrm{TV}}$ total-variation distance. We use this notation throughout whenever the resonant parameters are understood.

\begin{theorem}[Uniform critical random-set law]
\label{thm:fixed-target-intro}
Fix $k\ge3$. There exist $a_k^{\mathrm{TV}}>0$, depending only on $k$, and an absolute constant $B>0$ such that, at resonance and uniformly over all prime powers $q$ and integers $m\ge1$,
\begin{align}
d_{\mathrm{TV}}\!\left(
\mathcal L(\mathcal Z_{N,r}),
\mathcal L(\mathcal B_r)
\right)
&\le e^{-a_k^{\mathrm{TV}}\Psi_r},
\label{eq:bernoulli-subset-tv}
\\
d_{\mathrm{TV}}\!\left(
\mathcal L(\Xi_r),
\mathcal L(\Pi_r)
\right)
&\le e^{-a_k^{\mathrm{TV}}\Psi_r},
\label{eq:poisson-process-tv}
\\
d_{\mathrm{TV}}\!\left(
\mathcal L(X_{N,r}),
\Pois(\mu_{N,r})
\right)
&\le e^{-a_k^{\mathrm{TV}}\Psi_r},
\label{eq:critical-count-poisson}
\\
d_{\mathrm{TV}}\!\left(
\mathcal L(X_{N,r}),
\Pois(C_q)
\right)
&\le Bq^{-r}+e^{-a_k^{\mathrm{TV}}\Psi_r}.
\label{eq:uniform-count-to-Cq}
\end{align}
The constants and the threshold in $r$ are uniform in $q$ and $m$. Consequently, along every resonant sequence $q=q_r$ and $m=m_r\ge1$,
\[
d_{\mathrm{TV}}\!\left(
\mathcal L(X_{N,r}),\Pois(C_{q_r})
\right)\longrightarrow0.
\]
If $q_r\to\infty$, then $C_{q_r}\to1$ and hence $X_{N,r}\xrightarrow d\Pois(1)$.
\end{theorem}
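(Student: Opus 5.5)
The plan is to derive all four bounds from the first one, \eqref{eq:bernoulli-subset-tv}, and to prove that one by a Chen--Stein / Poisson-process argument on $\mathcal G_r$.

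\textbf{Reductions to the Bernoulli bound.} The Bernoulli-to-Poisson-process comparison is the standard coupling: $\mathcal B_r$ versus $\Pi_r$ costs at most $\sum_H p_r^2=\mu_{N,r}p_r$ in total variation. This quantity is $q^{-rc}$ times $O(1)$, so it is negligible against $e^{-a\Psi_r}$ since $rc\ge r\gg\Psi_r$. Together with \eqref{eq:bernoulli-subset-tv} this gives \eqref{eq:poisson-process-tv}. Projecting to total mass, which can only decrease total variation, gives \eqref{eq:critical-count-poisson}. For \eqref{eq:uniform-count-to-Cq}, I would use $\mu_{N,r}=\qbinom Nr q^{-rc}$. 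The Gaussian binomial satisfies $\qbinom{r+c}{r}=q^{rc}\prod_{j\le r}(1-q^{-(c+j)})/\prod_{j\le r}(1-q^{-j})$, so $|\mu_{N,r}-C_q|\le Bq^{-\min(r,c)}$ with $B$ absolute. At resonance $c=m\binom rk/r\ge r$ for large $r$, so this is $Bq^{-r}$. Finally $d_{\mathrm{TV}}(\Pois(\lambda),\Pois(\lambda'))\le|\lambda-\lambda'|$. The limiting statements then follow because $\Psi_r\to\infty$ uniformly in $m\ge1$ and $C_q\to1$ as $q\to\infty$.

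\textbf{Main step.} To prove \eqref{eq:bernoulli-subset-tv}, I would use the Arratia--Goldstein--Gordon process version of Chen--Stein for dependent indicators $I_H=\mathbf 1[H\text{ null}]$. For $H$ the neighbourhood is $\mathcal N_H=\{A:A\cap H\ne0\}$, and the error is bounded by $b_1+b_2+b_3$.

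First I would dispose of the dissociation term $b_3$. Conditioning on $I_H=1$ fixes $\Theta_N$ on $\Lambda^kH$. Restricted to $\Lambda^k$ of spaces meeting $H$ trivially, the map $\Theta_N$ is still uniform on a complement. This exact independence needs care, because $\Lambda^kA$ for $A\cap H=0$ is independent of $\Lambda^kH$, but the joint family of such $A$ need not be. A cleaner route is to compare with the Bernoulli process via a decomposition of the whole configuration. I would instead apply the total-variation bound through factorial moments of pairs: the law of a random subset is determined by the joint nullity probabilities $q^{-m\mathfrak E(\mathcal H)}$. I would therefore bound the distance between these and $p_r^{t}$ for disjoint-sum families via the exterior-rank inequality (\cref{thm:exterior-intro}), which says direct families have $\mathfrak E=\beta\mathfrak O=\binom rk t$ exactly. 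The terms $b_1\le\mu_{N,r}\max_H|\mathcal N_H|p_r$ are small because $|\mathcal N_H|p_r\lesssim q^{r+c-rc}\cdot$ (lower order).

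\textbf{Main obstacle.} The hard part is $b_2=\sum_H\sum_{A\in\mathcal N_H\setminus\{H\}}\Pp(H,A\text{ null})$. I would split by $d=\dim(H\cap A)$. The number of such $A$ is $\approx q^{(r-d)(c+d)}$. The pair's exterior dimension is $2\binom rk-\binom dk$. Counting then gives a summand of order $q^{-(r-d)(c+d)\cdot 0}\,q^{-rc+\dots}$, and I would reduce it to showing $m\bigl(\binom rk-\binom dk\bigr)-(r-d)(c+d)\gtrsim\Psi_r$-scale growth uniformly. For small $d$ the gain is $\ge c\ge$ large. For $d$ close to $r$, writing $s=r-d$, the gain is roughly $s\bigl(m\binom{r-1}{k-1}-c-r\bigr)\approx s\,c(k-1)-sr$. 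This is fine for $k\ge4$, but needs the fine $k=3$ balance near $s\sim\sqrt{\dots}$, where $\rho_{m,k}$ enters.

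For larger families, needed for the full set law rather than only pairs, I would invoke the stability theorem (\cref{thm:stability-intro}). High-excess families are killed by $q^{-m\Gamma r^{k-1}}$ against $q^{O(r^2t)}$ configurations. Low-excess families lie in Grassmann clusters whose count is controlled by the uniform span-deficiency bound. Optimizing the cluster radius against $\rho_{m,k}(r)$ should produce the exponent $a_k^{\mathrm{TV}}\Psi_r$, with the logarithm arising from the cluster-size combinatorics.
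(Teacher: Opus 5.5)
Your opening reductions are sound. From \eqref{eq:bernoulli-subset-tv}, the product coupling gives \eqref{eq:poisson-process-tv} at extra cost $\mu_{N,r}p_r$, and projecting to total mass gives \eqref{eq:critical-count-poisson}. Then $|\mu_{N,r}-C_q|\le Bq^{-r}$ (using $c\ge r$) together with $d_{\mathrm{TV}}(\Pois(\lambda),\Pois(\lambda'))\le|\lambda-\lambda'|$ gives \eqref{eq:uniform-count-to-Cq}. The paper runs the first two reductions in the opposite direction: count first, then upgrade.

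The gap is the main step, which never produces a bound on the random-set distance. You are right that $b_3\neq0$. Expanding $(v_1+u_1)\wedge\cdots\wedge(v_k+u_k)$ multilinearly, with $v_i\in H$ and suitable $u_i$, shows that the sum of the $\Lambda^kA$ over $A\cap H=0$ contains $\Lambda^kH$. So bounding $b_3$ means controlling $\E[I_H\mid\sigma(I_A:A\cap H=0)]$ given the whole outside configuration, including all the zero indicators, and the proposal never does this. This is where all the difficulty sits. Your $b_1,b_2$ are both $O(q^{-(c-r+1)})$, dominated by $d=1$ rather than $d=r-1$ as your estimate $q^{r+c-rc}$ suggests. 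So if $b_3$ were comparably small, you would get a rate $q^{-\Omega_k(mr^{k-1})}$: the exponential order predicted in \cref{conj:sharp-tv-exponent}, and far beyond what the paper proves.

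Your fallback, that the law is determined by the joint nullity probabilities $q^{-m\mathfrak E(\mathcal H)}$, is true but not quantitative. These probabilities equal $p_r^{|\mathcal H|}$ exactly on exterior-direct families. Turning the non-direct discrepancies into a total-variation bound over all subsets of $\mathcal G_r$ needs two things you do not supply: a truncated inclusion--exclusion with a controlled remainder, and a way to collapse the enormous state space. Your subsequent $b_1,b_2$ computations do not belong to this fallback at all.

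The three missing ingredients are the ones the paper uses. First, you need control of $\E(X_r)_t-\mu_r^t$, and of the full non-direct contribution, uniformly for all $t\le a_k^{\mathrm{mom}}\rho_{m,k}(r)$ (\cref{prop:growing-fixed-target-moments}). This requires the growing-excess stability of \cref{prop:growing-exterior-stability}, not the fixed-$\Gamma$ \cref{thm:stability-intro}. The reason is that the excess cutoff $\Gamma_*=2(1+t^2/(mr^{k-3}))$ is of order $r$ when $t\asymp\rho_{m,k}(r)$. The scale $\rho_{m,k}$ comes precisely from balancing the low-excess entropy $q^{C\Gamma t^2r}$ against the penalty $q^{-m\Gamma r^{k-1}}$.

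Second, you need Bonferroni/Brun inversion at order $T\asymp\rho_{m,k}(r)$ (\cref{lem:quantitative-brun}). Its remainder $(2C_2)^{T+1}/(T+1)!=e^{-T\log T+O(T)}$ is the source of the logarithm in $\Psi_r$, not cluster-size combinatorics.

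Third, you need the orbit reduction (\cref{lem:orbit-reduction}). Both $\mathcal L(\mathcal Z_{N,r})$ and $\mathcal L(\mathcal B_r)$ are $\mathrm{GL}(V_N)$-invariant, and $\mathrm{GL}(V_N)$ is transitive on direct $t$-subsets. So the random-set distance is at most the count distance plus the two non-directness probabilities. For $\mathcal Z_{N,r}$ that probability is at most $\Pp(X_r>T)+\sum_{s\le T}\E R_s/s!$, where $R_s$ counts non-direct ordered $s$-tuples of null spaces.

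Finally, you have put the $k=3$ difficulty in the wrong place. For pairs at distance $s$, your own gain $s((k-1)c-r)$ is enormous for every $k\ge3$, since $c\asymp_k mr^{k-1}$; so $\rho_{m,k}$ never enters the pair estimate. The trilinear subtlety arises only for $t$-tuples, where the crude entropy $q^{(t-1)^2r^2/4}$ competes with the single gain $q^{-c}$, with $c\asymp mr^2$.
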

The proof is given in \cref{subsec:proof-uniform-critical-random-set}.

The scale $\rho_{m,k}(r):=\sqrt m\,r^{(k-2)/2}$ arises from the entropy--stability balance proved in \cref{sec:fixed-target}: growing exterior stability \cref{prop:growing-exterior-stability} yields factorial-moment control through order $t\asymp\rho_{m,k}(r)$; see \cref{prop:growing-fixed-target-moments}. Applying quantitative Brun inversion at factorial order $T\asymp\rho_{m,k}(r)$, followed by Stirling's formula for the resulting factorial remainder, gives $\exp\!\left(-\Omega_k\!\left(\Psi_r\right)\right)$; see \cref{subsec:proof-uniform-critical-random-set}, specifically \cref{eq:critical-scalar-tv-rate}.

Since the approximation holds for the entire null configuration, every measurable statistic of the null $r$-spaces transfers to the independent Bernoulli model at the same total-variation scale. This random-set upgrade uses $\mathrm{GL}_N(q)$-symmetry and suppression of non-direct occupied configurations rather than a direct Stein--Chen argument; compare \cite{BarbourBrown1992}.

For $k=3$, the exact profile in \cref{eq:trilinear-exact-profile} gives the clearest instance of the entropy--stability mechanism. The same analysis also yields the resonant two-point law for the maximum null dimension; see \cref{cor:complete-maximum}. At the opposite end, \cref{prop:tv-lower-obstruction} shows that the approximation cannot converge arbitrarily faster: the first distance-one dependence already appears on the exponential scale
\[
 q^{-\left(\frac{k-1}{k!}+o(1)\right)m r^{k-1}}.
\]
Appendix~\ref{app:quantized-stability} identifies the same constant $(k-1)/k!$ as the first possible exterior-dependence threshold.

\begin{theorem}[Uniform low-excess exterior stability]
\label{thm:stability-intro}
Fix $k\ge2$ and $\Gamma>0$. There exist constants $C_{k,\Gamma},K_{k,\Gamma}>0$, depending only on $k$ and $\Gamma$, such that, over every field, every $\Gamma$-low-excess family $\mathcal H$ admits a partition $\mathcal H=\mathcal H_1\sqcup\cdots\sqcup\mathcal H_\kappa$ and representatives $A_i\in\mathcal H_i$ such that $d_{\mathrm{Gr}}(H,A_i)\le K_{k,\Gamma}$ for every $H\in\mathcal H_i$, while
\[
0\le \kappa r-\dim(A_1+\cdots+A_\kappa)\le C_{k,\Gamma}.
\]
\end{theorem}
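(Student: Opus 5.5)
We will prove the theorem by running the incremental (chain-rule) decomposition of $\mathfrak E$ and $\mathfrak O$ along the ordered family, and then greedily clustering. Write
\[
\mathfrak O=\sum_{j}(r-d_j),\qquad \mathfrak E=\sum_j\Bigl(\tbinom rk-e_j\Bigr),\qquad d_1=e_1=0.
\]
Then
\[
G_k(\mathcal H)=\sum_j g_j,\qquad g_j:=\beta d_j-e_j .
\]
The basic local estimate is the one underlying the exterior-rank inequality \cref{thm:exterior-intro}. If $D_j=H_j\cap\mathcal O_{<j}$ has dimension $d_j$, then
\[
\Lambda^kH_j\cap\mathcal E_{<j}\subseteq \Lambda^kH_j\cap\Lambda^k\mathcal O_{<j}=\Lambda^kD_j .
\]
Hence $e_j\le\binom{d_j}{k}$, and therefore
\[
g_j\ge \tfrac{d_j}{r}\tbinom rk-\tbinom{d_j}k\ge0 .
\]
(Strictly, $g_j$ is only controlled after reordering; I would first reorder so that each $H_j$ meets $\mathcal O_{<j}$ maximally. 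If needed, I would use the sum form $\mathfrak E\ge\beta\mathfrak O$ from \cref{thm:exterior-intro} to absorb boundary terms.) The function $f(d)=\frac dr\binom rk-\binom dk$ is concave-then-convex and vanishes at $d=0$ and $d=r$. A direct estimate gives two regimes:
\[
f(d)\ge c_k\,d\,r^{k-2}\quad\text{for } d\le r/2,\qquad f(d)\ge c_k\,(r-d)\,r^{k-2}\quad\text{for } d\ge r/2 .
\]
So each index $j$ contributes either $\gtrsim d_jr^{k-2}$ (small overlap with the past) or $\gtrsim (r-d_j)r^{k-2}$ (nearly contained in the past).

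Given the budget $G_k\le\Gamma r^{k-1}$, these bounds say two things. First, the total of the small overlaps, summed over indices with $d_j\le r/2$, is at most $C\Gamma r$. Second, the total codimension deficiency, summed over indices with $d_j>r/2$, is also at most $C\Gamma r$. This is too weak: we need $O(1)$ bounds, not $O(r)$. The refinement I plan to use is to bound $e_j$ by the part of $\Lambda^kD_j$ actually reachable from $\mathcal E_{<j}$, rather than by all of $\Lambda^kD_j$.

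Consider an index with small overlap $1\le d_j\le r/2$. Then $e_j\le\binom{d_j}k$ is tiny, while $\beta d_j\approx d_jr^{k-1}/k!$. So $g_j\ge c\,d_jr^{k-1}$, and the number of such indices, weighted by $d_j$, is at most $C_{k,\Gamma}$. This is exactly the span-deficiency count, since $\kappa r-\dim(\sum A_i)$ will be bounded by the total of these small $d_j$. Indices with $d_j=0$ open a new cluster and cost nothing.

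Now consider an index with large overlap, where $H_j$ lies in $\mathcal O_{<j}$ up to codimension $s_j=r-d_j$. Here I would show that $e_j$ close to $\beta d_j$ forces $D_j$ to sit essentially inside a single earlier space $H_i$. The alternative is that $D_j$ is spread across a sum of several earlier spaces, or across a direct sum of clusters. In that case $\Lambda^kD_j$ meets $\mathcal E_{<j}$ only in the "pure" wedges. A Künneth-type count,
\[
\Lambda^k(U\oplus W)=\bigoplus_{a+b=k}\Lambda^aU\otimes\Lambda^bW,
\]
then shows that the mixed components are missing. Those mixed components have size $\gtrsim r^{k-1}\cdot\min$ (the split dimension). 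So each such splitting costs $\gtrsim r^{k-1}$, and only $O_{k,\Gamma}(1)$ of them can occur. Consequently $H_j$ is at Grassmann distance $O(1)$ from some earlier $H_i$, because $s_j$ itself is bounded by $g_j\gtrsim s_jr^{k-1}$ in the upper regime.

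We then assign $H_j$ to the cluster of that $H_i$. Distances could accumulate along chains, so I will instead always compare to the cluster representative $A_i$. This requires a lemma showing that $\Lambda^kH_j\cap\mathcal E_{<j}$ is mostly supported on $\Lambda^k A_i$ plus an $O(1)$-dimensional neighbourhood. That lemma should follow from the same Künneth mixed-term count, applied to $H_j$ versus $A_i+(\text{cluster span})$, with the cluster span shown to have dimension $r+O(1)$.

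The main obstacle is this last point: obtaining uniform $O_{k,\Gamma}(1)$ bounds on both the cluster radius $K_{k,\Gamma}$ and the cluster span, independent of the cluster size, which may be unbounded. A Grassmann cluster of many spaces all inside a fixed $(r+1)$-space has $\mathfrak O=r+1$, but $\mathfrak E$ is about $\binom{r+1}k$, giving an excess of roughly $r^{k-1}$ per unit of span growth. So span growth inside a cluster is paid for at rate $r^{k-1}$, and this caps the cluster span at $r+O(\Gamma)$; that in turn caps $K$. I would formalize this via a monotone potential, the sum over clusters of $(\dim\text{span}-r)$, which each step can increase only by paying $\gtrsim r^{k-1}$ excess. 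The key input is the inequality
\[
\dim\Lambda^kW-\beta\dim W\ge c_k(\dim W-r)\,r^{k-2}\cdot r
\]
for $r\le\dim W\le 2r$, which is checked directly from the binomial expansion. Nonnegativity of the deficiency is immediate from $\dim(A_1+\cdots+A_\kappa)\le\kappa r$.
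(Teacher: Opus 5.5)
Your endpoint step works once repaired, but you misdiagnosed it. The identity $G_k(\mathcal H)=\sum_{j\ge2}(\beta d_j-e_j)$ with $e_j\le\binom{d_j}{k}$ holds in \emph{every} ordering, so no reordering is needed. The profile also satisfies $\beta d-\binom dk\ge\tau_k r^{k-1}\min\{d,r-d\}$, with $r^{k-1}$ rather than $r^{k-2}$. So $\sum_j\min\{d_j,r-d_j\}\le\Gamma/\tau_k$ is immediate; this is the paper's first step.

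The genuine gap is localization. A bounded $s_j=r-d_j$ only says that $H_j$ is almost contained in the whole prefix span $\mathcal O_{<j}$, not that it is close to any single earlier space. So ``because $s_j$ is bounded'' does not give $d_{\mathrm{Gr}}(H_j,H_i)=O(1)$. Your concentration mechanism, that a spread-out $D_j$ meets $\mathcal E_{<j}$ only in pure wedges, presupposes $\mathcal E_{<j}\le\bigoplus_i\Lambda^kU_i$ for an exact direct decomposition $\bigoplus_i U_i$. But $\mathcal E_{<j}$ contains $\Lambda^k$ of attachment steps and of new blocks that overlap earlier spaces, so it is not of this form, and the K\"unneth count cannot be applied.

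The fallback potential argument does not close this. The excess is computed from $\sum_H\Lambda^kH$, not from $\Lambda^kW$ of a cluster span, so your inequality for $\dim\Lambda^kW-\beta\dim W$ bounds nothing. For example, two complementary $r$-spaces have $G_k=0$, while $\dim\Lambda^k(H_1\oplus H_2)-2\beta r=\binom{2r}{k}-2\binom rk\asymp r^k$. Moreover, proving that a step enlarging a cluster span pays $\gtrsim r^{k-1}$ requires knowing which mixed wedges lie in $\mathcal E_{<j}$, and that is the localization itself.

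The missing idea is an \emph{exactly} pure comparison space of small codimension. For the earlier new-block spaces $A_1,\ldots,A_\ell$, take $A_i^\circ\le A_i$ complementary to $A_i\cap(A_1+\cdots+A_{i-1})$. Then $\mathcal T_0=\bigoplus_i\Lambda^kA_i^\circ\le\mathcal E_{<j}$. The endpoint budget plus the low excess of the prefix give $\dim\mathcal E_{<j}\le\ell\binom rk+O(r^{k-1})$ and $\dim\mathcal T_0\ge\ell\binom rk-O(r^{k-1})$, hence $\dim(\mathcal E_{<j}/\mathcal T_0)=O(r^{k-1})$. Therefore
$\dim(\Lambda^kL_j\cap\mathcal T_0)\ge e_j-O(r^{k-1})\ge\beta d_j-O(r^{k-1})$.
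The support-based pure-block lemma evaluates the left side exactly as $\sum_i\binom{a_i}{k}$ with $a_i=\dim(L_j\cap A_i^\circ)$. Capped convexity then forces some $a_i\ge r-K_{k,\Gamma}$. This localizes every attachment directly to a new-block representative, so chains never arise and no a priori cluster-span bound is needed.

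The cluster-span bound you single out as the main obstacle is in fact cheap. Order a connected component of the graph $d_{\mathrm{Gr}}<r/2$ breadth-first, so every later member is an attachment. Then apply the endpoint budget to that subfamily, using that $G_k$ of a subfamily is at most $G_k(\mathcal H)$. You would still need the $\mathcal T_0$ argument to show that representatives of distinct clusters are new-block steps relative to one another. That is exactly what gives $\kappa r-\dim(A_1+\cdots+A_\kappa)\le C_{k,\Gamma}$.
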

The proof is given in \cref{sec:low-excess-stability}, after the localization lemma \cref{lem:bounded-radius-clustering}.

In particular, near equality cannot arise from diffuse weak dependence among many subspaces: dependence is concentrated in uniformly bounded clusters. The stronger normal-form statement proved in \cref{sec:low-excess-stability} sharpens this to bounded-thickness Grassmann clusters: every cluster spans only $r+O(1)$ dimensions, different clusters are separated at scale $r$, and representatives from distinct clusters have only $O(1)$ total ordinary span deficiency.

The underlying endpoint budget is stronger: in any ordering, only $O_{k,\Gamma}(1)$ steps have a nonzero but non-full overlap with the preceding span, while all remaining steps are exactly direct or exactly contained. This control reduces the count of low-excess configurations enough to close the borderline $k=3$ factorial-moment argument; Appendix~\ref{app:quantized-stability} records the corresponding quantized bounds.

The critical balance is related to deterministic isotropy thresholds. Feldman--Propp proved the corresponding lower bound on the ambient dimension required to force an isotropic subspace \cite{FeldmanPropp1992}, and Chen--Xu--Ye determine the associated deterministic isotropy indices for alternating multilinear maps over algebraically closed fields of arbitrary characteristic \cite{ChenXuYe2026}. Those results concern deterministic existence thresholds. Here the map is random and we determine critical points; the main theorem is uniform even when the field size $q$ and target dimension $m$ vary with $r$.

A direct probabilistic precursor is Eberhard--Sabatini \cite[Proposition~4.1(2)]{EberhardSabatini2025}: their Poisson law for isotropic $3$-spaces of a random alternating bilinear map over $\F_p$ is the specialization $k=2$, $r=3$, $m=N-3$ of our high-target theorem (\cref{thm:poisson-general-intro}).

Related generic-isotropy, multilinear-variety, and extremal results include \cite{Tevelev2001,Anzaldo2026,ChenYe2025,ChenYeGeometry2026,Qiao2023,ConlonPohoataZakharov2021}; for broader exterior-algebra and subspace-arrangement methods, see also \cite{GhorpadePatilPillai2009,Kinser2011,ScottWilmer2021}.

The bilinear case appears to be different. For one alternating form the critical count can converge to zero in probability even though its expectation converges to $C_q$; for a pair of forms there is always a common isotropic $r$-space at the critical dimension and the second factorial moment diverges. These obstructions are proved in \cref{sec:bilinear-boundary}.

More generally, \cref{prop:rplusone-space-moment-obstruction} shows that null $(r+1)$-spaces are a universal source of high-moment divergence: they are asymptotically absent at the level of probability, yet when this obstruction applies they force factorial-moment divergence at order $O(m r^{k-2})$. This separates process-level Poisson approximation from Poisson factorial-moment approximation at growing orders and identifies a second scale beyond the stability scale. The constant $(k-1)/k!$ appearing at that second scale also governs the first possible exterior dependence and the total-variation obstruction proved below.

\section{First moments, threshold localization, and pair overlaps}\label{sec:preliminaries}
\subsection{Finite-field estimates}

For $0\le r\le N$, $\qbinom Nr=\prod_{i=0}^{r-1}(q^{N-i}-1)/(q^{r-i}-1)$. For $\ell\ge1$ set $C_{q,\ell}=\prod_{j=1}^{\ell}(1-q^{-j})^{-1}$, so $C_q=C_{q,\infty}$. Since $q\ge2$, one has $C_q\le C_2$. Then
\begin{equation}
 q^{r(N-r)}\le\qbinom Nr\le C_q q^{r(N-r)}. \label{eq:qbinom-bounds}
\end{equation}

\subsection{First moment and arithmetic locking}

Set $\Delta_N(r)=k(N-r)-m\binom{r-1}{k-1}$. Then
\begin{equation}
 r(N-r)-m\binom rk=\frac r k\Delta_N(r), \label{eq:locking-identity}
\end{equation}
\begin{equation}
 \Delta_N(r+1)-\Delta_N(r)=-k-m\binom{r-1}{k-2}<0. \label{eq:delta-step}
\end{equation}

\begin{proposition}[Arithmetic locking]\label{prop:locking}
Fix $q$, $k\ge3$, and $m$. Let $M_N=\max\{1\le r\le N:\Delta_N(r)\ge0\}$.
\begin{enumerate}[label=\textup{(\roman*)}]
 \item along every sequence $N\to\infty$ for which $\Delta_N(M_N)>0$, $\Pp(\alpha_N=M_N)\to1$;
 \item along every sequence $N\to\infty$ for which $\Delta_N(M_N)=0$, $\Pp(\alpha_N\in\{M_N-1,M_N\})\to1$.
\end{enumerate}
\end{proposition}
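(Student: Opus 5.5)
We prove (i) and (ii) with a first-moment bound from above and a second-moment bound from below. Throughout, $q$, $k\ge3$ and $m$ are fixed and $N\to\infty$. By the defining formula for $\Delta_N$, the critical $r$ satisfies $m\binom{r-1}{k-1}\approx kN$, so $M_N\asymp N^{1/(k-1)}$. In particular $N-M_N\to\infty$ and $r\ll N$ near the threshold.

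\emph{Upper bound.} Nullity is inherited by subspaces, so $\alpha_N\ge r+1$ forces a null $(r+1)$-space. By \eqref{eq:first-moment}, \eqref{eq:qbinom-bounds} and \eqref{eq:locking-identity},
\[
\Pp(\alpha_N\ge M_N+1)\le\mu_{N,M_N+1}\le C_q\,q^{\frac{M_N+1}{k}\Delta_N(M_N+1)}.
\]
By maximality $\Delta_N(M_N+1)\le -1$, so this is at most $C_q q^{-(M_N+1)/k}\to0$. In fact \eqref{eq:delta-step} gives $\Delta_N(M_N+1)\le\Delta_N(M_N)-k-m\binom{M_N-1}{k-2}$, which is far more negative, but the crude bound already suffices.

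\emph{Lower bound.} Set $r=M_N$ in case (i) and $r=M_N-1$ in case (ii). In case (ii), \eqref{eq:delta-step} gives $\Delta_N(M_N-1)\ge k+m\binom{M_N-2}{k-2}\ge1$. So in both cases $\Delta_N(r)\ge1$, and $\mu_{N,r}\ge q^{r/k}\to\infty$. We apply Paley--Zygmund, or Chebyshev, in the form
\[
\Pp(X_{N,r}=0)\le \frac{\operatorname{Var}X_{N,r}}{\mu_{N,r}^2}=\frac1{\mu_{N,r}}+\frac{1}{\mu_{N,r}^2}\sum_{H\ne H'}\Bigl(\Pp(H,H'\text{ null})-p^2\Bigr),\qquad p:=q^{-m\binom rk}.
\]
For a pair with $\dim(H\cap H')=d$, $\Lambda^k(H\cap H')=\Lambda^kH\cap\Lambda^kH'$, so $\mathfrak E=2\binom rk-\binom dk$. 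Hence the joint null probability is $p^2q^{m\binom dk}$, and the pairs with $d<k$ contribute nothing. The pair-count contribution with overlap $d$ is
\[
\frac{\#\{\text{pairs with overlap }d\}}{\qbinom Nr^2}\,q^{m\binom dk}\lesssim q^{-d(N-2r+d)}\,C\,q^{d(r-d)}\,q^{m\binom dk}
\]
up to $C_q$-type constants. Here the count is $\qbinom Nr\qbinom rd\qbinom{N-r}{r-d}q^{d(r-d)}$ and we compare with $\qbinom Nr^2$. This simplifies to $q^{-dN+O(dr)+m\binom dk}$.

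\emph{Main obstacle.} The key step is to show that $\sum_{d=k}^{r-1}q^{-d(N-cr)+m\binom dk}$, suitably corrected, tends to $0$, uniformly over the overlap range. For small $d$ the term $dN$ dominates, since $r=o(N)$. The delicate range is $d$ close to $r$, where $m\binom dk$ competes with $dN$. Here I would use convexity in $d$ of the exponent
\[
f(d)=-d(N-2r+d)+d(r-d)+m\binom dk,
\]
so its maximum over $[k,r-1]$ is attained at an endpoint. The endpoint $d=k$ gives $-\Omega(N)$. The endpoint $d=r-1$ gives a term equal to $\mu_{N,r}^{-1}$ times the number of neighbours, which is about $q^{N}$, times $q^{-m\binom{r-1}{k-1}}$. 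Writing $m\binom{r-1}{k-1}=kN-k r-\Delta_N(r)$ and using $k\ge3$, this is at most $q^{-(k-1)N+O(r)}\to0$. This is exactly where $k\ge3$ enters. The terms for $d=r-j$ with bounded $j$ need the sharper form of the same bookkeeping, and I expect the careful verification that the convexity/endpoint estimate absorbs the $C_q$ and $O(r^2)$ error terms to be the main technical work.

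Combining the two bounds: $\Pp(\alpha_N\ge M_N)\to1$ in case (i), and $\Pp(\alpha_N\ge M_N-1)\to1$ in case (ii). Together with $\Pp(\alpha_N\le M_N)\to1$ from the upper bound, this proves both statements.
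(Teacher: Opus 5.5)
Your upper bound matches the paper's argument exactly. So does your choice of second-moment level: $r=M_N$ in case (i) and $r=M_N-1$ in case (ii), with $\Delta_N(r)\ge1$ so that $\mu_{N,r}\ge q^{r/k}$.

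The gap is the step you call the main obstacle. You never show $\sum_{d=k}^{r-1}\pi_d\bigl(q^{m\binom dk}-1\bigr)\to0$, and the bookkeeping you set up does not do it as written.

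First, the pair count is wrong. The number of $H'$ with $\dim(H\cap H')=d$ is $\qbinom rd\qbinom{N-r}{r-d}q^{(r-d)^2}$, not $\qbinom rd\qbinom{N-r}{r-d}q^{d(r-d)}$. It gives exactly $\pi_d\le C_q^2q^{-d(N-2r+d)}$, with no extra factor $q^{d(r-d)}$.

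Second, and more seriously, the simplified exponent you pass to, $-dN+O(dr)+m\binom dk$, is too weak when $k=3$. There $r^2\asymp N/m$. At $d=r-1$, after writing $m\binom{r-1}{k}=(r-k)(N-r)-\bigl(\frac rk-1\bigr)\Delta_N(r)$, the $\pm r^2$ contributions cancel exactly, leaving $-(k-1)(N-r)+O(r)$. For instance, replacing $-d(N-2r+d)$ by $-dN+2dr$ gives roughly $-2N+6N/m$ at $d=r-1$ when $\Delta_N(r)=O(1)$, and this is positive for $m\le2$. So absorbing the $O(r^2)$ errors is not a formality: the exponent must be kept exact.

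With the exact exponent $g(d)=-d(N-2r+d)+m\binom dk$, your convexity idea does close the argument, and the terms $d=r-j$ need no separate treatment. The second difference of $g$ is $-2+m\binom{d-1}{k-2}\ge-2+\binom k2>0$ for $d\ge k+1$ when $k\ge3$. Hence $\max_{k\le d\le r-1}g(d)=\max\{g(k),g(r-1)\}$, and
\[
g(k)\le-k(N-2r)+m,\qquad g(r-1)=-(k-1)(N-r)+(r-1)-\Bigl(\frac rk-1\Bigr)\Delta_N(r).
\]
Since $\Delta_N(r)\ge0$ and $r=o(N)$, the sum of the at most $r$ terms is $O\bigl(rq^{-(k-1-o(1))N}\bigr)\to0$. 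Your variance display should also be an inequality, since you dropped $-\sum_Hp^2$.

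The paper proves this step differently, via the pair-overlap lemma. There $\binom dk\le(d/r)^k\binom rk$ together with $m\binom rk\le r(N-r)$ gives $g(d)\le-d(1-d/r)(N-2r)$. This decays geometrically from both ends, needs only $N-2r\to\infty$, and is uniform in $q$, $m$ and $k\ge2$. In the paper, $k\ge3$ is used only to get $M_N=o(N)$. Contrary to your remark, the $d=r-1$ endpoint is harmless even for $k=2$ once $N-2r\to\infty$.
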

The proof is deferred until after the pair-overlap estimate \cref{lem:pair-overlap}.

\subsection{Pair-overlap suppression}

For subspaces $A,B\le V$,
\begin{equation}
 \Lambda^kA\cap\Lambda^kB=\Lambda^k(A\cap B).
 \label{eq:exterior-intersection}
\end{equation}
Indeed, choose a basis of $A\cap B$, extend it separately to bases of $A$ and $B$, and then to a basis of $V$; the corresponding wedge basis of $\Lambda^kV$ gives the identity.

For two $r$-spaces $H_1,H_2$, we have $\dim(\Lambda^kH_2+\Lambda^kH_1)=2\binom rk-\binom {d_2}{k}$. For fixed $H_1$, the number of $H_2$ with intersection dimension $d_2$ is
\begin{equation}
 \mathbb{H}_{d_2}=\qbinom {r}{d_2}\qbinom{N-r}{r-d_2}q^{(r-d_2)^2}.
\label{eq:grassmann-intersection-count}
\end{equation}
Let $\pi_{d_2}=\mathbb{H}_{d_2}/\qbinom Nr$. By \cref{eq:qbinom-bounds},
\begin{equation}
 \pi_{d_2}\le C_q^2q^{-d_2(N-2r+d_2)}.
 \label{eq:pt-bound}
\end{equation}

Conditioning on the intersection dimension $d_2=\dim(H_1\cap H_2)$ and using \cref{eq:exterior-intersection} gives the exact pair identity
\begin{equation}
 \frac{\E[X_{N,r}(X_{N,r}-1)]}{\mu_{N,r}^2}
 =
 \sum_{d_2=0}^{r-1}\pi_{d_2}q^{m\binom {d_2}{k}}.
 \label{eq:exact-pair-factorial-moment}
\end{equation}

\begin{lemma}[Pair-overlap bound]\label{lem:pair-overlap}
Uniformly over all prime powers $q$, let $2\le k\le r$ and $m\ge1$ vary, and suppose $N-2r\to\infty$ and
\begin{equation}
 m\binom rk\le r(N-r).
 \label{eq:below-threshold}
\end{equation}
Then
\[
 \sum_{d_2=1}^{r-1}\pi_{d_2}q^{m\binom {d_2}{k}}
 =O\!\left(q^{-(N-2r)/2}\right)=o(1).
\]
\end{lemma}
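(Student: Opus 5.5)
The plan is to bound each term of the sum by the uniform estimate \cref{eq:pt-bound} together with a convexity bound on $\binom{d}{k}$ that uses \cref{eq:below-threshold}. The exponent will then be controlled by a quantity symmetric under $d\leftrightarrow r-d$, which reduces the whole sum to two geometric series.

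\emph{Step 1: the binomial ratio.} For $1\le d_2\le r-1$ and $k\ge2$ I will first show
\[
\frac{\binom{d_2}{k}}{\binom rk}=\prod_{i=0}^{k-1}\frac{d_2-i}{r-i}\le\Bigl(\frac{d_2}{r}\Bigr)^{k}\le\Bigl(\frac{d_2}{r}\Bigr)^{2}.
\]
The first inequality holds because each factor satisfies $(d_2-i)/(r-i)\le d_2/r$ when $d_2\le r$; if $d_2<k$ the left side is simply $0$. Multiplying by $m\binom rk$ and applying \cref{eq:below-threshold} gives
\[
m\binom{d_2}{k}\le \frac{d_2^2}{r^2}\,r(N-r)=\frac{d_2^2(N-r)}{r}.
\]

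\emph{Step 2: the exponent.} Combining this with \cref{eq:pt-bound}, the $d_2$-th term is at most $C_q^2q^{-E(d_2)}$, where
\[
E(d_2)=d_2(N-2r+d_2)-\frac{d_2^2(N-r)}{r}=(N-2r)\,\frac{d_2(r-d_2)}{r}.
\]
The $d_2^2$ terms cancel exactly, which is why the exponent factors so cleanly. Since $d_2(r-d_2)/r\ge\tfrac12\min(d_2,r-d_2)$, we get $E(d_2)\ge\tfrac12(N-2r)\min(d_2,r-d_2)$.

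\emph{Step 3: summation.} Split the range according to whether $d_2\le r/2$ or $d_2>r/2$, writing $j=\min(d_2,r-d_2)\ge1$ in each case. This bounds the sum by
\[
2C_q^2\sum_{j\ge1}q^{-j(N-2r)/2}.
\]
Because $N-2r\to\infty$ and $q\ge2$, the ratio $q^{-(N-2r)/2}$ is eventually at most $1/2$. Hence the geometric series is at most $2q^{-(N-2r)/2}$, and using $C_q\le C_2$ the total is $O(q^{-(N-2r)/2})=o(1)$ uniformly in $q$, $m$ and $k$.

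The only real point is Step 1, namely the loss-free comparison of $m\binom{d_2}{k}$ with $d_2^2(N-r)/r$. A cruder bound that is only linear in $d_2$ would fail near $d_2=r-1$, because there the Grassmann penalty $d_2(N-2r+d_2)$ is nearly exhausted by the nullity gain. The quadratic bound $(d_2/r)^2$ is exactly strong enough to leave the residual factor $(r-d_2)/r$, which keeps the terms near $d_2=r$ summable.
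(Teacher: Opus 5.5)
Your proof is correct and follows the paper's argument. Both proofs bound $\binom{d_2}{k}/\binom rk$ by a power of $d_2/r$, reach the same exponent bound $-(N-2r)\,d_2(r-d_2)/r$, and split at $d_2=r/2$ into two geometric tails. The only difference is cosmetic: you weaken $(d_2/r)^k$ to $(d_2/r)^2$ at the outset so the $d_2^2$ terms cancel directly, whereas the paper keeps $x^{k-1}$, factors $1-x^{k-1}=(1-x)(1+x+\cdots+x^{k-2})$, and only then discards the higher powers.
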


\begin{proof}
Put $c=N-r$ and $x=d_2/r$. Since $\binom {d_2}{k}/\binom rk\le\left(d_2/r\right)^k$, \cref{eq:below-threshold} gives $m\binom {d_2}{k}\le cd_2\,x^{k-1}$. Hence the exponent in \cref{eq:pt-bound} satisfies
\begin{align*}
 -d_2(c-r+d_2)+m\binom {d_2}{k}
 &\le -cd_2(1-x^{k-1})+rd_2(1-x)\\
 &=-d_2(1-x)\bigl(c(1+x+\cdots+x^{k-2})-r\bigr)\\
 &\le -d_2(1-x)(c-r).
\end{align*}
If $d_2\le r/2$, then $d_2(1-x)\ge d_2/2$; if $d_2\ge r/2$, then $d_2(1-x)\ge(r-d_2)/2$. Summing the two geometric tails and using $C_q\le C_2$ proves the claim with an absolute implied constant.
\end{proof}

Since $q^{m\binom {d_2}{k}}\ge1$, the lemma gives $\sum_{d_2=1}^{r-1}\pi_{d_2}=o(1)$, while $\pi_r=\qbinom Nr^{-1}=o(1)$; hence $\pi_0=1-o(1)$. Combining this with \cref{eq:exact-pair-factorial-moment} gives
\begin{equation}
 \frac{\E[X_{N,r}(X_{N,r}-1)]}{\mu_{N,r}^2}=1+o(1).
 \label{eq:second-moment-consequence}
\end{equation}
If in addition $\mu_{N,r}\to\infty$, then $\operatorname{Var}X_{N,r}=\E[X_{N,r}(X_{N,r}-1)]+\E X_{N,r}-(\E X_{N,r})^2$, so Chebyshev's inequality gives $\Pp(X_{N,r}>0)\to1$.

\begin{proof}[Proof of \cref{prop:locking}]
Since $\binom{r-1}{k-1}=r^{k-1}/(k-1)!+O_k(r^{k-2})$, putting $r=\lfloor aN^{1/(k-1)}\rfloor$ gives
\[
 \frac{\Delta_N(r)}{N}
 =k-\frac{m a^{k-1}}{(k-1)!}+o_{k,m,a}(1).
\]
Choosing $a$ respectively below and above $\bigl(k(k-1)!/m\bigr)^{1/(k-1)}$ brackets the unique crossing. Hence monotonicity from \cref{eq:delta-step} gives $M_N=\Theta_{k,m}(N^{1/(k-1)})=o(N)$. In particular, $M_N\to\infty$ and $N-2M_N\to\infty$.

Suppose $\Delta_N(M_N)>0$. Since $\Delta_N(M_N)$ is an integer, \cref{eq:first-moment,eq:qbinom-bounds,eq:locking-identity} give $\mu_{N,M_N}\ge q^{M_N/k}\to\infty$. The condition $\Delta_N(M_N)\ge0$ is exactly \cref{eq:below-threshold}, so \cref{eq:second-moment-consequence} gives $X_{N,M_N}>0$ with high probability. Maximality of $M_N$ gives $\Delta_N(M_N+1)\le-1$, hence $\mu_{N,M_N+1}\le C_qq^{-(M_N+1)/k}\to0$. Any null subspace of dimension larger than $M_N$ contains one of dimension $M_N+1$, so Markov's inequality yields $\alpha_N=M_N$ with high probability.

Now suppose $\Delta_N(M_N)=0$ and write $r=M_N$. By \cref{eq:delta-step}, $\Delta_N(r-1)=k+m\binom{r-2}{k-2}\ge k$, so $\mu_{N,r-1}\to\infty$ and \cref{eq:second-moment-consequence} gives $X_{N,r-1}>0$ with high probability. Also $\Delta_N(r+1)=-k-m\binom{r-1}{k-2}\le-k$, so $\mu_{N,r+1}\to0$. Therefore $\Pp(\alpha_N\in\{r-1,r\})\to1$.
\end{proof}

\section{Exterior rank geometry}\label{sec:exterior-geometry}
Throughout this section, the ambient field is arbitrary.

For $x\in\Lambda^kV$, define its support by
\[
 \operatorname{supp}(x)
 :=
 \Span\{
 \iota_{\phi_{k-1}}\cdots\iota_{\phi_1}x:
 \phi_1,\ldots,\phi_{k-1}\in V^*
 \},
\]
where $V^*=\operatorname{Hom}(V,\mathbb F)$ is the dual space and $\iota_\phi$ denotes contraction by $\phi$.

\begin{lemma}[Support]\label{lem:support-contraction}
The subspace $\operatorname{supp}(x)$ is the least $W\le V$ such that $x\in\Lambda^kW$.
\end{lemma}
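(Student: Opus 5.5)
We must prove that $\operatorname{supp}(x)$ is the least $W\le V$ with $x\in\Lambda^kW$. The plan is to establish two inclusions: first, $x\in\Lambda^kW$ forces $\operatorname{supp}(x)\le W$; second, $x\in\Lambda^k\operatorname{supp}(x)$. Together these show that $\operatorname{supp}(x)$ is itself such a $W$ and is contained in every other one.

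For the first inclusion, use that contraction $\iota_\phi$ maps $\Lambda^jW$ into $\Lambda^{j-1}W$ for every $\phi\in V^*$. This holds because $\iota_\phi$ is a graded derivation:
\[
\iota_\phi(w_1\wedge\cdots\wedge w_j)=\sum_i(-1)^{i-1}\phi(w_i)\,w_1\wedge\cdots\wedge\widehat{w_i}\wedge\cdots\wedge w_j .
\]
If $x\in\Lambda^kW$, then $k-1$ successive contractions land in $\Lambda^1W=W$. Hence every spanning vector of $\operatorname{supp}(x)$ lies in $W$, and so $\operatorname{supp}(x)\le W$.

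For the second inclusion, set $S=\operatorname{supp}(x)$. Choose a basis $e_1,\dots,e_s$ of $S$ and extend it to a basis $e_1,\dots,e_n$ of $V$, with dual basis $e_1^*,\dots,e_n^*$. Expand
\[
x=\sum_{I}x_I\,e_I ,
\]
where $I$ runs over $k$-subsets of $\{1,\dots,n\}$. Suppose some $x_I\ne0$ with $I\not\subseteq\{1,\dots,s\}$, and pick $j\in I$ with $j>s$. Contract $x$ by $e_i^*$ for the $k-1$ indices $i\in I\setminus\{j\}$ (in some fixed order). The result is
\[
\pm x_I\,e_j+\sum_{l\notin I}\pm x_{(I\setminus\{j\})\cup\{l\}}\,e_l ,
\]
since only basis monomials containing $I\setminus\{j\}$ survive, and each survives as the single remaining vector. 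The coefficient of $e_j$ is $\pm x_I\ne0$. But this vector lies in $S=\Span(e_1,\dots,e_s)$ by the definition of $\operatorname{supp}(x)$, and $j>s$, which is a contradiction. Hence every $I$ with $x_I\ne0$ lies in $\{1,\dots,s\}$, and $x\in\Lambda^kS$.

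Nothing here is a real obstacle. The only point needing care is the sign and coefficient bookkeeping in the iterated contraction, specifically checking that the $e_j$ coefficient is exactly $\pm x_I$ with no cancellation. This holds because distinct $k$-subsets containing $I\setminus\{j\}$ differ precisely in their remaining element. The argument works over any field, with no characteristic assumptions.
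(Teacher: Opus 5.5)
Your proof is correct and follows the paper's argument. The easy inclusion holds because contractions preserve $\Lambda^\bullet W$. For the reverse, you extend a basis of $\operatorname{supp}(x)$ to a basis of $V$ and contract any monomial involving a complementary direction against its other $k-1$ factors. Your coordinate bookkeeping, showing that the $e_j$-coefficient is exactly $\pm x_I$ with no cancellation, makes explicit the step the paper's one-line version leaves implicit.
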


\begin{proof}
If $x\in\Lambda^kU$, every displayed contraction lies in $U$, so $\operatorname{supp}(x)\le U$. Conversely, choose $V=\operatorname{supp}(x)\oplus Z$. If $x$ had a wedge component containing a vector from $Z$, contraction against the remaining $k-1$ factors would produce a vector with nonzero $Z$-component, contrary to the definition of $\operatorname{supp}(x)$.
\end{proof}

\begin{lemma}[Support in a direct sum]\label{lem:support-direct-sum}
Let $k\ge2$ and $V=V_1\oplus\cdots\oplus V_b$. If $x_i\in\Lambda^kV_i$, then $\operatorname{supp}(x_1+\cdots+x_b)=\bigoplus_{i=1}^b\operatorname{supp}(x_i)$.
\end{lemma}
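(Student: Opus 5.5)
We prove the two inclusions separately, using \cref{lem:support-contraction}, which characterizes $\operatorname{supp}(x)$ as the least $W$ with $x\in\Lambda^kW$. Write $x=x_1+\cdots+x_b$ and $S_i=\operatorname{supp}(x_i)\le V_i$. The sum $S_1+\cdots+S_b$ is direct because the $V_i$ are independent. Since $x_i\in\Lambda^kS_i\subseteq\Lambda^k(\bigoplus_j S_j)$ for every $i$, we get $x\in\Lambda^k(\bigoplus_jS_j)$. The minimality in \cref{lem:support-contraction} then gives $\operatorname{supp}(x)\le\bigoplus_iS_i$.

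For the reverse inclusion it suffices to show $S_i\le\operatorname{supp}(x)$ for each $i$. We do this directly from the contraction definition of support.

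\begin{enumerate}[label=\textup{(\arabic*)}]
\item Fix $i$ and let $\pi_i\colon V\to V_i$ be the projection along $\bigoplus_{j\ne i}V_j$. Every functional $\psi\in V_i^*$ extends to $\phi=\psi\circ\pi_i\in V^*$, which vanishes on $V_j$ for all $j\ne i$.
\item Take functionals $\phi_1,\ldots,\phi_{k-1}$ of this form. Contraction by $\phi_1$ maps $\Lambda^kV_j$ to $\Lambda^{k-1}V_j$ and annihilates it whenever $\phi_1|_{V_j}=0$. Hence for $j\ne i$ we have $\iota_{\phi_1}x_j=0$, and so $\iota_{\phi_1}x_j$ vanishes at the first step, before any further contraction. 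This is where $k\ge2$ enters: there is at least one contraction to perform.
\item Consequently
\[
 \iota_{\phi_{k-1}}\cdots\iota_{\phi_1}x=\iota_{\phi_{k-1}}\cdots\iota_{\phi_1}x_i .
\]
\item The contraction of $x_i\in\Lambda^kV_i$ by $\phi_\ell$ depends only on the restriction $\phi_\ell|_{V_i}=\psi_\ell$. Therefore, as $\psi_1,\ldots,\psi_{k-1}$ range over $V_i^*$, the right-hand side ranges over the spanning set defining $\operatorname{supp}(x_i)$, computed inside $V_i$.
\end{enumerate}

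The support of $x_i$ computed in $V_i$ agrees with its support computed in $V$. Indeed, by \cref{lem:support-contraction} both are the least subspace $W$ with $x_i\in\Lambda^kW$, and any such minimal $W$ lies in $V_i$, since $x_i\in\Lambda^k V_i$ and $\Lambda^kW\cap\Lambda^kV_i=\Lambda^k(W\cap V_i)$ by \cref{eq:exterior-intersection}.

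It follows that $S_i$ is spanned by contractions of $x$, so $S_i\le\operatorname{supp}(x)$. Summing over $i$ gives $\bigoplus_iS_i\le\operatorname{supp}(x)$, which combined with the first inclusion proves the equality.

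The main point requiring care is step (2). The vanishing $\iota_{\phi_1}x_j=0$ for $j\ne i$ uses that a functional vanishing on $V_j$ kills $\Lambda^kV_j$ under a single contraction. That fails for $k=1$, where the contraction is evaluation and a mixed functional would be needed; this is why the hypothesis $k\ge2$ is imposed. The remaining steps are routine linear algebra.
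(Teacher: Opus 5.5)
Your proof is correct and follows the paper's argument. The forward inclusion holds because $x$ lies in $\Lambda^k\bigl(\bigoplus_i\operatorname{supp}(x_i)\bigr)$. The reverse inclusion comes from extending covectors on $V_i$ by zero, so that the first contraction already kills every $x_j$ with $j\ne i$.

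Two minor remarks. Your detour comparing the support of $x_i$ computed in $V_i$ with that computed in $V$ is unnecessary, since a contraction of $x_i$ depends only on the restrictions of the covectors to $V_i$. Your closing explanation of the $k=1$ case is also off: for $k=1$ no contractions occur at all and $\operatorname{supp}(x)=\Span\{x\}$, which is the point you already made correctly in step~(2).
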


\begin{proof}
Every contraction of $x_1+\cdots+x_b$ lies in the right-hand side. Conversely, extending covectors on $V_i$ by zero on the other summands shows that every contraction of $x_i$ is also a contraction of $x_1+\cdots+x_b$.
\end{proof}

\begin{theorem}[Deficiency-sensitive exterior expansion]\label{thm:exterior-intro}
Let $2\le k\le r$, and write the ordinary span deficiency as $\Phi=tr-\mathfrak O=ar+b$ with $0\le b<r$. Then
\begin{equation}
 \mathfrak E
 \ge (t-a)\binom rk-\binom bk. \label{eq:deficiency-exterior-expansion}
\end{equation}
If $\Phi>0$ and $b=0$, then
\begin{equation}
 \mathfrak E
 \ge (t-a)\binom rk+\binom{r-1}{k-1}. \label{eq:multiple-deficiency-gain}
\end{equation}
Consequently
\begin{equation}
 \mathfrak E\ge\beta \mathfrak O,
 \label{eq:exterior-expansion}
\end{equation}
and, whenever the ordinary sum is not direct,
\begin{equation}
 \mathfrak E\ge\beta\left(\mathfrak O+1\right).
 \label{eq:strict-exterior-expansion}
\end{equation}
Equality in \cref{eq:exterior-expansion} holds exactly for an ordinary direct sum, and the normalized one-unit gap in \cref{eq:strict-exterior-expansion} is sharp.
\end{theorem}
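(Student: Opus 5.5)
The plan is an incremental (greedy) count along the given ordering, followed by a convexity argument. First I would note that $\mathfrak E=\sum_{j=1}^t\bigl(\binom rk-e_j\bigr)$ with $e_1=0$, and that
\[
\Lambda^kH_j\cap\mathcal E_{<j}\subseteq\Lambda^kH_j\cap\Lambda^k\mathcal O_{<j}=\Lambda^k(H_j\cap\mathcal O_{<j})
\]
by \cref{eq:exterior-intersection}. This gives $e_j\le\binom{d_j}{k}$. Since $\sum_j d_j=\Phi$ and $0\le d_j\le r$, the quantity $\mathfrak E$ is bounded below by the minimum of the concave function $\sum_j\bigl(\binom rk-\binom{d_j}{k}\bigr)$ over this box. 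The map $d\mapsto\binom dk$ is convex, so the minimum sits at an extreme point of the box: $a$ coordinates equal to $r$, one equal to $b$, and the rest $0$. This yields \cref{eq:deficiency-exterior-expansion}.

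For \cref{eq:multiple-deficiency-gain} (case $\Phi>0$, $b=0$), the bound above allows $a$ steps with $H_j\subseteq\mathcal O_{<j}$ that contribute nothing. I would sharpen the first non-direct step.

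\emph{Setup.} Let $j$ be the first index with $d_j>0$, so that $\mathcal O_{<j}=H_1\oplus\cdots\oplus H_{j-1}$ and $\mathcal E_{<j}=\bigoplus_{i<j}\Lambda^kH_i$. If $d_j<r$, the step already loses at least $\binom rk-\binom{d_j}{k}\ge\binom{r-1}{k-1}$ relative to the worst case, and re-running the convexity bound with the remaining budget gives the claim. So assume $H_j\subseteq\mathcal O_{<j}$. Since $H_j$ is distinct from each $H_i$ and has the same dimension, it is contained in no single $H_i$. Pick $v\in H_j$ having nonzero components in at least two summands, and write $H_j=\langle v\rangle\oplus W'$.

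\emph{Key claim.} I will show that $v\wedge\Lambda^{k-1}W'$, which has dimension $\binom{r-1}{k-1}$, meets $\mathcal E_{<j}$ trivially. Suppose a nonzero $x=v\wedge y=\sum_i x_i$ with $x_i\in\Lambda^kH_i$. \cref{lem:support-direct-sum} gives $\operatorname{supp}(x)=\bigoplus_i\operatorname{supp}(x_i)$, and \cref{lem:support-contraction} gives $\operatorname{supp}(x)=\langle v\rangle\oplus\operatorname{supp}(y)$. Then contracting $x$ by covectors vanishing on $v$ and on a complement should force $v$ into a single summand, contradicting the choice of $v$.

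This contraction step is the part I expect to be the main obstacle. The difficulty is that the supports of the $x_i$ may split $v$ across summands. If the argument does not close directly, I would instead project onto a summand $H_i$ in which $v$ has a nonzero component and compare degrees. Granting the claim, $e_j\le\binom rk-\binom{r-1}{k-1}$ at this step, and the remaining steps are handled by the convexity bound.

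For the consequences, substitute $\mathfrak O=(t-a)r-b$ and $\beta=\binom rk/r$.
\begin{itemize}[label=$\cdot$]
\item \cref{eq:exterior-expansion} reduces to $\binom bk\le b\binom rk/r$ for $0\le b<r$. This holds because $\binom bk/b=\binom{b-1}{k-1}/k$ is increasing in $b$.
\item \cref{eq:strict-exterior-expansion} with $b\ge1$ reduces to $\binom bk\le\beta(b-1)$. Write $\binom bk=\frac{b-k+1}{k}\binom{b-1}{k-2}\cdot\frac{b-1}{k-1}$; it suffices that $\binom bk/(b-1)$ is nondecreasing in $b$ and that at $b=r-1$ one has $\binom{r-1}{k}/(r-2)=\binom{r-1}{k-1}\frac{r-k}{k(r-2)}\le\binom{r-1}{k-1}/k=\beta$, which holds since $k\ge2$.
\item \cref{eq:strict-exterior-expansion} with $b=0$ follows from \cref{eq:multiple-deficiency-gain}, since $\binom{r-1}{k-1}\ge\beta=\binom{r-1}{k-1}/k$.
\end{itemize}

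Equality in \cref{eq:exterior-expansion} forces $\Phi=0$ by \cref{eq:strict-exterior-expansion}. Conversely, a direct sum gives $\mathfrak E=t\binom rk=\beta\mathfrak O$, since by \cref{lem:support-direct-sum} the spaces $\Lambda^kH_i$ are then independent. Sharpness of the one-unit gap comes from $t=2$ and $d_2=1$: then $\mathfrak E=2\binom rk$ by \cref{eq:exterior-intersection} (as $\binom1k=0$), $\mathfrak O=2r-1$, and so $\beta(\mathfrak O+1)=2\binom rk$.
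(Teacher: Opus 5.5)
Your proof of \cref{eq:deficiency-exterior-expansion} is the paper's argument. The case $0<d_j<r$ is fine once the accounting is written as $\binom{d_j}{k}+\binom{r-d_j}{k}\le\binom{r-1}{k}$, since the remaining budget is $(a-1)r+(r-d_j)$. Your derivation of the normalized consequences, via monotonicity of $\binom bk/b$ and $\binom bk/(b-1)$, correctly replaces the paper's endpoint concavity of $\beta d-\binom dk$. (The factor $\frac{b-1}{k-1}$ in your displayed identity for $\binom bk$ should be $\frac{b}{k-1}$, but nothing uses it.)

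The genuine gap is the key claim in the contained case $d_j=r$. You leave it unproved, and the route you sketch does not close it. The equality $\langle v\rangle\oplus\operatorname{supp}(y)=\bigoplus_i\operatorname{supp}(x_i)$ only yields $v\in\bigoplus_i\operatorname{supp}(x_i)\le\bigoplus_i(H_j\cap H_i)$. That is entirely compatible with $v$ having two nonzero components, because supports do not detect divisibility by $v$. What actually rules it out is $v\wedge x=0$ together with the multigrading $\Lambda(\bigoplus_iH_i)\cong\bigotimes_i\Lambda H_i$. Write $v=\sum_iv_i$. For $i'\ne i$, the component of $v\wedge\sum_ix_i$ in $H_{i'}\otimes\Lambda^kH_i$ is $\pm v_{i'}\otimes x_i$. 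Because $k\ge2$, these multidegrees are distinct for distinct pairs $(i',i)$. If two components of $v$ are nonzero, then every $i$ has some $i'\ne i$ with $v_{i'}\ne0$, which forces every $x_i=0$.

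A cleaner repair drops $v$ and uses only the support lemmas you already cite. Take any $x=\sum_ix_i\in\Lambda^kH_j\cap\mathcal E_{<j}$. Then \cref{lem:support-direct-sum} gives $\operatorname{supp}(x_i)\le\operatorname{supp}(x)\cap H_i\le H_j\cap H_i$, so $\Lambda^kH_j\cap\mathcal E_{<j}=\bigoplus_i\Lambda^k(H_j\cap H_i)$; this is \cref{lem:pure-block-intersection}. The spaces $H_j\cap H_i$ are independent in $H_j$, and by distinctness each has dimension at most $r-1$. Capped convexity then gives $e_j\le\binom{r-1}{k}=\binom rk-\binom{r-1}{k-1}$, which is exactly the bound you need at a contained step.

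The paper never confronts this situation, because it reorders the family. If some pair intersects nontrivially, it puts that pair first, so $1\le d_2\le r-1$. Otherwise it takes an inclusion-minimal non-direct subfamily; there all pairwise intersections vanish, and the same support argument gives $e_\ell=0$. With the repair, your version instead obtains the gain at the first non-direct step of an arbitrary ordering.
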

The proof follows the overlap-profile estimate \cref{lem:overlap-profile}.

For $(r,k)=(3,2)$, \cref{eq:exterior-expansion} becomes $\mathfrak E\ge \mathfrak O$, recovering the rank inequality used by Eberhard--Sabatini \cite[Lemma~4.2]{EberhardSabatini2025} in their critical bilinear model. The form above retains the full residue of the ordinary span deficiency and, when that deficiency is a positive multiple of $r$, gives the stronger gain in \cref{eq:multiple-deficiency-gain}.

\begin{lemma}[Overlap-profile inequality]\label{lem:overlap-profile}
\begin{equation}
 tr-\mathfrak O=\sum_{j=2}^t d_j,
 \qquad
 t\binom rk-\mathfrak E=\sum_{j=2}^t e_j,
 \label{eq:profile-identities}
\end{equation}
and
\begin{equation}
 e_j\le\binom{d_j}{k},
 \qquad
 \mathfrak E\ge t\binom rk-\sum_{j=2}^t\binom{d_j}{k}.
 \label{eq:profile-rank-bound}
\end{equation}
\end{lemma}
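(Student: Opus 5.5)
The identities in \cref{eq:profile-identities} come from telescoping, and the bound \cref{eq:profile-rank-bound} comes from a single inclusion of the overlap into an exterior power. The key input is the intersection identity \cref{eq:exterior-intersection}, together with the fact that $\Lambda^kH_j\cap\mathcal E_{<j}\le\Lambda^kH_j\cap\Lambda^k\mathcal O_{<j}$.

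For the identities, use dimension additivity. At each step,
\[
\dim\mathcal O_{<j+1}=\dim\mathcal O_{<j}+r-d_j .
\]
Summing from $j=2$ to $t$, with $\dim\mathcal O_{<2}=r$, gives $\mathfrak O=tr-\sum_{j\ge2}d_j$. The same argument applies to $\mathcal E_{<j+1}=\mathcal E_{<j}+\Lambda^kH_j$. Since $\dim\Lambda^kH_j=\binom rk$, the formula $\dim(U+W)=\dim U+\dim W-\dim(U\cap W)$ telescopes to $\mathfrak E=t\binom rk-\sum_{j\ge2}e_j$. Here $\mathcal O_{<t+1}=\mathcal O$ and $\mathcal E_{<t+1}=\mathcal E$ by convention.

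For the bound $e_j\le\binom{d_j}{k}$, note first that $\Lambda^kH_i\le\Lambda^k\mathcal O_{<j}$ for each $i<j$, since $H_i\le\mathcal O_{<j}$. Hence $\mathcal E_{<j}\le\Lambda^k\mathcal O_{<j}$. Therefore
\[
\Lambda^kH_j\cap\mathcal E_{<j}\le\Lambda^kH_j\cap\Lambda^k\mathcal O_{<j}=\Lambda^k(H_j\cap\mathcal O_{<j}),
\]
where the last equality is \cref{eq:exterior-intersection} applied with $A=H_j$ and $B=\mathcal O_{<j}$. That identity was proved for arbitrary subspaces, so it applies here. Taking dimensions gives $e_j\le\binom{d_j}{k}$. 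Substituting this into the second identity of \cref{eq:profile-identities} yields $\mathfrak E\ge t\binom rk-\sum_j\binom{d_j}{k}$.

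There is no real obstacle in this argument. The only point needing care is the inclusion $\mathcal E_{<j}\le\Lambda^k\mathcal O_{<j}$. In general it is strict: the exterior sum is not the exterior power of the ordinary sum. This is why we get an inequality $e_j\le\binom{d_j}{k}$ and not an equality. If one wanted to double-check \cref{eq:exterior-intersection} for a non-$r$-dimensional $B$, the same basis-extension argument works verbatim.
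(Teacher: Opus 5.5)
Your proof is correct and follows the paper's own argument. You obtain the identities by telescoping the ordinary and exterior dimension increments. You obtain $e_j\le\binom{d_j}{k}$ from the inclusion $\mathcal E_{<j}\le\Lambda^k\mathcal O_{<j}$ together with the intersection identity $\Lambda^kA\cap\Lambda^kB=\Lambda^k(A\cap B)$, applied with $A=H_j$ and $B=\mathcal O_{<j}$.
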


\begin{proof}
The identities in \cref{eq:profile-identities} follow by summing the ordinary and exterior dimension increments. Since $\Lambda^kH_1+\cdots+\Lambda^kH_{j-1} \le \Lambda^k(H_1+\cdots+H_{j-1})$, \cref{eq:exterior-intersection} gives
\[
 \Lambda^kH_j\cap(\Lambda^kH_1+\cdots+\Lambda^kH_{j-1})
 \subseteq
 \Lambda^k\bigl(H_j\cap(H_1+\cdots+H_{j-1})\bigr),
\]
which proves $e_j\le\binom{d_j}{k}$ and hence \cref{eq:profile-rank-bound}.
\end{proof}

\begin{proof}[Proof of \cref{thm:exterior-intro}]
Write $\Phi=ar+b$ with $0\le b<r$. We use the elementary convexity of $d\mapsto\binom dk$: since $\binom{d+1}{k}-\binom dk=\binom d{k-1}$ is increasing in $d$, concentrating a fixed total mass can only increase the sum (e.g., $\binom32+\binom32<\binom42+\binom22$). Thus, whenever $0\le d_i\le r$ and $\sum_i d_i=ar+b$,
\[
 \sum_i\binom{d_i}{k}\le a\binom rk+\binom bk.
\]
If $b=0$ and at least one positive entry is strictly smaller than $r$, the largest possible sum is instead $(a-1)\binom rk+\binom{r-1}{k}=a\binom rk-\binom{r-1}{k-1}$, attained by the profile $(r,\ldots,r,r-1,1,0,\ldots,0)$.

Apply this observation to the overlaps in \cref{lem:overlap-profile}. Since $\sum_{j=2}^t d_j=\Phi$, we obtain $\mathfrak E\ge (t-a)\binom rk-\binom bk$, proving \cref{eq:deficiency-exterior-expansion}.

Suppose now that $\Phi>0$ and $b=0$. We choose an ordering that exposes one unavoidable non-extremal step. If some pair intersects nontrivially, put that pair first. Distinctness gives $1\le d_2\le r-1$, so the strict form of the preceding convexity bound and \cref{lem:overlap-profile} give
\[
 \mathfrak E
 \ge (t-a)\binom rk+\binom{r-1}{k-1}.
\]

It remains to consider a pairwise-disjoint family. Choose an inclusion-minimal subfamily whose ordinary sum is not direct and order it first, say $H_1,\ldots,H_\ell$. Then $H_1\oplus\cdots\oplus H_{\ell-1}$ is direct and $d_\ell>0$. Moreover $\Lambda^kH_\ell\cap\bigoplus_{i<\ell}\Lambda^kH_i=0$. Indeed, if $x=\sum_{i<\ell}x_i$ belongs to the intersection, then \cref{lem:support-direct-sum} gives
\[
 \operatorname{supp}(x)=\bigoplus_{i<\ell}\operatorname{supp}(x_i)\subseteq H_\ell.
\]
Each summand support therefore lies in $H_i\cap H_\ell=0$, so every $x_i=0$. Thus $e_\ell=0$.

If $d_\ell=r$, the remaining defect is $(a-1)r$, and the same convexity bound gives at most $(a-1)\binom rk$ for the remaining exterior overlaps. If $1\le d_\ell<r$, their total ordinary defect is $ar-d_\ell=(a-1)r+(r-d_\ell)$, so their exterior overlaps are at most
\[
 (a-1)\binom rk+\binom{r-d_\ell}{k}
 \le (a-1)\binom rk+\binom{r-1}{k}.
\]
In both cases $\sum_{j=2}^t e_j\le a\binom rk-\binom{r-1}{k-1}$, which proves \cref{eq:multiple-deficiency-gain}.

To derive the normalized form, note that $\beta \mathfrak O=(t-a)\binom rk-\beta b$. If $1\le b<r$, \cref{eq:deficiency-exterior-expansion} gives $\mathfrak E-\beta \mathfrak O\ge\beta b-\binom bk$. The function $g(d)=\beta d-\binom dk$ has nonincreasing first differences $g(d+1)-g(d)=\beta-\binom d{k-1}$. Hence it is discretely concave on $1\le d\le r-1$ and its minimum occurs at an endpoint; here $g(1)=\beta$ and $g(r-1)=(k-1)\beta$. Thus \cref{eq:strict-exterior-expansion} holds. If $b=0<\Phi$, then $\mathfrak E-\beta \mathfrak O\ge\binom{r-1}{k-1}=k\beta\ge\beta$. This proves \cref{eq:exterior-expansion,eq:strict-exterior-expansion}; equality in \cref{eq:exterior-expansion} is exactly the direct-sum case. Finally, if two $r$-spaces meet in a line, then $\mathfrak O=2r-1$ and \cref{eq:exterior-intersection} gives $\mathfrak E=2\binom rk$, so the excess above the right side of \cref{eq:exterior-expansion} is exactly $\beta$. Thus the normalized one-unit gap is sharp.
\end{proof}

\section{Low-excess stability and Grassmann clusters}\label{sec:low-excess-stability}
Throughout this section, $k\ge2$ and $\Gamma>0$ are fixed, the field is arbitrary, and all bounds are uniform in the field and $|\mathcal H|$.

The argument has two stages: an endpoint estimate first reduces every step to either a new block or an attachment, and a localization argument then groups attachments around earlier new blocks.

\subsection{Endpoint dichotomy}
For every ordering,
\begin{equation}
 G_k(\mathcal H)=\sum_{j=2}^t(\beta d_j-e_j). \label{eq:fixed-excess-sum}
\end{equation}

Every summand is nonnegative because $e_j\le\binom{d_j}{k}\le\beta d_j$. Since $G_k$ is independent of the ordering, every subfamily $\mathcal H'\subseteq\mathcal H$ also satisfies $0\le G_k(\mathcal H')\le G_k(\mathcal H)$. Indeed, reorder the family so that the members of $\mathcal H'$ come first; the sum over that initial block is exactly the excess of the subfamily.

\begin{lemma}[Global endpoint-defect budget]\label{lem:low-excess-endpoints}
There exists $C_{k,\Gamma}>0$, depending only on $k$ and $\Gamma$, such that, with $D_{k,\Gamma}:=\lceil C_{k,\Gamma}\rceil$, every ordering of a $\Gamma$-low-excess family $\mathcal H$ satisfies
\begin{equation}
 \sum_{j=2}^t\min\{d_j,r-d_j\}\le C_{k,\Gamma}. \label{eq:global-endpoint-defect}
\end{equation}
In particular, every $j\ge2$ satisfies $d_j\le D_{k,\Gamma}$ or $r-d_j\le D_{k,\Gamma}$, and
\begin{equation}
 \#\{j\ge2:0<d_j<r\}\le \lfloor C_{k,\Gamma}\rfloor. \label{eq:nonendpoint-step-count}
\end{equation}
Thus, apart from at most $\lfloor C_{k,\Gamma}\rfloor$ indices, every step is an exact endpoint: $d_j=0$ or $d_j=r$.
\end{lemma}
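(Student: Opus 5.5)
Since the excess is ordering-independent, we fix an arbitrary ordering and use the decomposition \cref{eq:fixed-excess-sum}, $G_k(\mathcal H)=\sum_{j\ge2}(\beta d_j-e_j)$. Every summand is nonnegative and the total is at most $\Gamma r^{k-1}$. The plan is to prove a per-step lower bound of the form
\[
 \beta d_j-e_j\;\ge\;\beta d_j-\binom{d_j}{k}\;\ge\;c_k\,r^{k-1}\min\{d_j,r-d_j\}
\]
for some $c_k>0$ depending only on $k$. Summing over $j$ then gives \cref{eq:global-endpoint-defect} with $C_{k,\Gamma}=\Gamma/c_k$. The first inequality is exactly \cref{lem:overlap-profile}, namely $e_j\le\binom{d_j}{k}$, so everything reduces to a one-variable estimate for $g(d)=\beta d-\binom dk$ on $0\le d\le r$.

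For the one-variable estimate, note that $g(0)=g(r)=0$. The proof of \cref{thm:exterior-intro} already showed that $g$ is discretely concave, with increments $\beta-\binom{d}{k-1}$. We split at $d\le r/2$ and $d\ge r/2$.

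For $d\le r/2$, we use $\binom dk\le (d/r)^{k-1}\cdot\frac dr\binom rk$, which follows as in the proof of \cref{lem:pair-overlap}. This gives $g(d)\ge \beta d\,(1-2^{1-k})$, and $\beta\asymp r^{k-1}/k!$.

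For $d\ge r/2$, set $s=r-d$ and use the symmetric form $g(r-s)=\binom rk-\binom{r-s}{k}-\beta s$. The quantity $\binom rk-\binom{r-s}{k}$ is a sum of $s$ increments $\binom{r-i}{k-1}$ with $1\le i\le s\le r/2$. Comparing each increment with $\beta=\binom{r-1}{k-1}/k$ shows it exceeds $\beta$ by a factor at least roughly $k\,2^{1-k}>1$; this is where $k\ge2$ is used. Hence $g(r-s)\ge c'_k r^{k-1}s$.

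The main obstacle is making these constants uniform for small $r$ and at the boundary. For $k=2$ the comparison needs care: $\binom{r-i}{1}=r-i$ versus $\beta=(r-1)/2$ still gives a gain of order $r\cdot s$ for $s\le r/2$, but the constants must be tracked. If the ratio bound $\binom{r-i}{k-1}/\binom{r-1}{k-1}\ge 2^{1-k}$ is too lossy for $k=2$, the fallback is discrete concavity: on each half, $g$ dominates the chord from the endpoint to the midpoint, and $g(\lfloor r/2\rfloor)\gtrsim_k r^k$.

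The consequences then follow directly. Each index $j$ with $0<d_j<r$ has $\min\{d_j,r-d_j\}\ge1$, so the number of such indices is at most $\lfloor C_{k,\Gamma}\rfloor$, which is \cref{eq:nonendpoint-step-count}. Each individual term of \cref{eq:global-endpoint-defect} is at most $C_{k,\Gamma}$, so $d_j\le D_{k,\Gamma}$ or $r-d_j\le D_{k,\Gamma}$.
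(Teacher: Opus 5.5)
Your reduction is the paper's: bound each summand of \cref{eq:fixed-excess-sum} below by $g(d_j)=\beta d_j-\binom{d_j}{k}$ using $e_j\le\binom{d_j}{k}$, prove $g(d)\ge c_k r^{k-1}\min\{d,r-d\}$, and sum. Your $d\le r/2$ half and the two consequences are correct.

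The $d\ge r/2$ half, as written, is not. You claim that each increment $\binom{r-i}{k-1}$, $1\le i\le s\le r/2$, exceeds $\beta=\frac1k\binom{r-1}{k-1}$ by a factor $k2^{1-k}>1$. But $k2^{1-k}$ equals $1$ for $k=2$ and equals $3/4,1/2,\ldots$ for $k=3,4,\ldots$. For $k\ge3$ the late increments really do fall below $\beta$: with $k=3$, $r=10$ one has $\beta=12$ but $\binom{5}{2}=10$. So $\sum_{i\le s}\bigl(\binom{r-i}{k-1}-\beta\bigr)$ cannot be bounded term by term. The case that breaks is precisely $k\ge3$, the paper's main case, not $k=2$.

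Your own fallback repairs this for every $k\ge2$, and it should be the main argument rather than a contingency. The increments $g(d+1)-g(d)=\beta-\binom{d}{k-1}$ are nonincreasing on $0\le d<r$, and $g(0)=g(r)=0$. Put $m=\lfloor r/2\rfloor$. Averaging increments gives $g(d)/d\ge g(m)/m$ for $1\le d\le m$ and $g(d)/(r-d)\ge g(m)/(r-m)$ for $m\le d<r$. Hence $g(d)\ge\frac{\min\{d,r-d\}}{r}\,g(m)$, and your first-half bound gives $g(m)\ge(1-2^{1-k})\beta m\gtrsim_k r^k$.

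The paper takes a different but equally short route. It writes $f_{r,k}(d)=\frac dk\bigl(\binom{r-1}{k-1}-\binom{d-1}{k-1}\bigr)$ and expands the bracket by Pascal's identity into $s=r-d$ terms $\binom{h}{k-2}$ with $h\ge\lfloor r/2\rfloor-1$. The prefactor $d\ge r/2$ then supplies one power of $r$ and the bracket supplies $s\,r^{k-2}$, so no single increment ever has to beat $\beta$. Your concavity version has a small bonus: it is uniform for all $r\ge k$. The paper's constant $\eta_{2,k}$ needs $r$ large enough that $\binom{\lfloor r/2\rfloor-1}{k-2}>0$, which is harmless under the paper's large-$r$ convention.
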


\begin{proof}
Set $f_{r,k}(0)=0$. For $1\le d\le r$ define
\[
 f_{r,k}(d)=\beta d-\binom dk
 =\frac d k\left(\binom{r-1}{k-1}-\binom{d-1}{k-1}\right).
\] By \cref{eq:fixed-excess-sum,lem:overlap-profile}, $\beta d_j-e_j\ge f_{r,k}(d_j)\ge0$. If $1\le d\le r/2$, then $\binom{d-1}{k-1}/\binom{r-1}{k-1}\le2^{1-k}$, and hence
\[
 f_{r,k}(d)
 \ge \frac d k(1-2^{1-k})\binom{r-1}{k-1}
 \ge \eta_{1,k}\,d\,r^{k-1}
\]
for some $\eta_{1,k}>0$. If $r/2\le d<r$ and $s=r-d$, Pascal's identity gives
\[
 \binom{r-1}{k-1}-\binom{d-1}{k-1}
 =\sum_{h=d-1}^{r-2}\binom h{k-2},
\]
so
\[
 f_{r,k}(d)
 \ge \frac r{2k}\,s\binom{\lfloor r/2\rfloor-1}{k-2}
 \ge \eta_{2,k}\,s\,r^{k-1}
\]
for some $\eta_{2,k}>0$. At $d=0$ and $d=r$ both sides below vanish. Consequently, with $\tau_k=\min\{\eta_{1,k},\eta_{2,k}\}>0$,
\begin{equation}
 f_{r,k}(d)\ge
 \tau_k r^{k-1}\min\{d,r-d\}
 \qquad(0\le d\le r).
\label{eq:uniform-endpoint-profile}
\end{equation}
Summing over $j$ and using \cref{eq:fixed-excess-sum}, $\Gamma r^{k-1}\ge G_k(\mathcal H) \ge \tau_k r^{k-1}\sum_{j=2}^t\min\{d_j,r-d_j\}$. Thus \cref{eq:global-endpoint-defect} holds with the fixed choice $C_{k,\Gamma}:=\Gamma/\tau_k$ and $D_{k,\Gamma}:=\lceil C_{k,\Gamma}\rceil$.
\end{proof}

Fix these choices of $C_{k,\Gamma}$ and $D_{k,\Gamma}$ from now on.

Take $r>2D_{k,\Gamma}$. Since $\dim(\mathcal O_{<j}+H_j)-\dim\mathcal O_{<j}=r-d_j$, the quantity $d_j=\dim(H_j\cap\mathcal O_{<j})$ measures the overlap with the preceding span, while $r-d_j$ is exactly the number of new ordinary dimensions contributed by $H_j$.

Call $H_j$ a \emph{new-block step} if $d_j\le D_{k,\Gamma}$ and an \emph{attachment step} if $r-d_j\le D_{k,\Gamma}$. By \cref{lem:low-excess-endpoints}, these alternatives are disjoint and exhaustive. A new-block step has small overlap and adds at least $r-D_{k,\Gamma}$ dimensions; an attachment is almost contained in the preceding span and adds at most $D_{k,\Gamma}$ dimensions. At the exact endpoints, $d_j=0$ means direct from the preceding span, while $d_j=r$ means contained in it.

\begin{figure}[H]
\centering
\begin{tikzpicture}
\begin{axis}[
 width=0.82\textwidth,
 height=5.8cm,
 xmin=0,xmax=1,
 ymin=0,ymax=0.42,
 axis lines=left,
 xlabel={$x=d/r$},
 ylabel={$x-x^3$},
 xtick={0,0.5,1},
 xticklabels={$0$,$1/2$,$1$},
 ytick=\empty,
 domain=0:1,
 samples=180,
 clip=false
]
 \addplot[black,very thick] {x-x^3};
 \node[anchor=south west,align=left] at (axis cs:0.145,0.058)
  {\scriptsize $d=O(1)$\\[-1pt]\scriptsize new block};
 \node[anchor=south east,align=right] at (axis cs:0.885,0.058)
  {\scriptsize $r-d=O(1)$\\[-1pt]\scriptsize attachment};
 \node[align=center,fill=white,inner sep=1.5pt] at (axis cs:0.52,0.235)
  {\scriptsize intermediate overlap\\[-1pt]\scriptsize costs $\Theta(r^3)$};
 \draw[-{Latex[length=1.8mm]},thin]
  (axis cs:0.14,0.07) -- (axis cs:0.025,0.01);
 \draw[-{Latex[length=1.8mm]},thin]
  (axis cs:0.89,0.08) -- (axis cs:0.975,0.01);
\end{axis}
\end{tikzpicture}
\caption{One-step endpoint dichotomy for $k=3$. At a single step $j$, for $d_r=\lfloor xr\rfloor$, $6r^{-3}f_{r,3}(d_r)\longrightarrow x-x^3$. Low excess leaves two possible step types: $d_j=O(1)$ gives a new-block step, while $r-d_j=O(1)$ gives an attachment step. Intermediate overlap costs order $r^3$, exceeding the $O(r^2)$ low-excess budget, and is therefore excluded; see \cref{lem:low-excess-endpoints}.}
\label{fig:exterior-excess-landscape}
\end{figure}
More generally, for fixed $k$ and $d_r=\lfloor xr\rfloor$, $k!r^{-k}f_{r,k}(d_r)\to x-x^k$.

By \cref{eq:nonendpoint-step-count}, all but $O_{k,\Gamma}(1)$ steps are exact endpoints. The endpoint dichotomy alone does not yet produce clusters: an attachment is only known to be almost contained in the whole preceding span. The remaining task is to show that it is actually close to one earlier new-block space.

\subsection{Localization and canonical blocks}
The next lemma isolates the pure exterior contribution of the earlier new-block spaces; the capped convexity bound then forces an attachment to concentrate near one of them.

\begin{lemma}[Pure-block intersection]\label{lem:pure-block-intersection}
Let $V_1,\ldots,V_\ell$ have direct sum and let $L$ be any subspace of the ambient vector space. Then, for every $k\ge2$, $\Lambda^kL\cap\bigoplus_{i=1}^{\ell}\Lambda^kV_i=\bigoplus_{i=1}^{\ell}\Lambda^k(L\cap V_i)$.
\end{lemma}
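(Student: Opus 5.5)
The inclusion $\supseteq$ is immediate. Each $\Lambda^k(L\cap V_i)$ lies in $\Lambda^kL$ and in $\Lambda^kV_i$. The sum on the right is direct because $\bigoplus_i\Lambda^kV_i$ is direct: choose a basis of $V_1\oplus\cdots\oplus V_\ell$ adapted to the decomposition, and the wedge monomials supported in distinct $V_i$ are distinct basis vectors of $\Lambda^kV$. So the substance is $\subseteq$. I would prove it with the support machinery, \cref{lem:support-contraction,lem:support-direct-sum}, exactly as in the pairwise-disjoint case of the proof of \cref{thm:exterior-intro}.

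Take $x\in\Lambda^kL\cap\bigoplus_i\Lambda^kV_i$ and write $x=\sum_{i=1}^\ell x_i$ with $x_i\in\Lambda^kV_i$. Extend the direct sum $V_1\oplus\cdots\oplus V_\ell$ by a complement $V_0$ to all of $V$; then $x_0=0$. \Cref{lem:support-direct-sum} (applied with $k\ge2$) gives
\[
 \operatorname{supp}(x)=\bigoplus_{i=1}^\ell\operatorname{supp}(x_i).
\]
Since $x\in\Lambda^kL$, \cref{lem:support-contraction} gives $\operatorname{supp}(x)\le L$. Hence each $\operatorname{supp}(x_i)\le L$, and trivially $\operatorname{supp}(x_i)\le V_i$, so $\operatorname{supp}(x_i)\le L\cap V_i$. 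Applying \cref{lem:support-contraction} again, now to $x_i$, gives $x_i\in\Lambda^k(L\cap V_i)$. Summing over $i$ proves $x\in\bigoplus_i\Lambda^k(L\cap V_i)$.

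The only point needing care is the direct-sum support lemma, which must hold when the $V_i$ do not span $V$. This is harmless: add the complement $V_0$ with $x_0=0$, or note that contractions by covectors that vanish on the other summands still recover each $\operatorname{supp}(x_i)$. With that in place the argument is uniform over all fields, since no characteristic assumptions enter \cref{lem:support-contraction,lem:support-direct-sum}. I expect no real obstacle beyond checking this.
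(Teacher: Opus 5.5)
Your proof is correct and is essentially the paper's argument: write $x=\sum_i x_i$, use \cref{lem:support-direct-sum} to get $\bigoplus_i\operatorname{supp}(x_i)=\operatorname{supp}(x)\le L$, and conclude $x_i\in\Lambda^k(L\cap V_i)$ from \cref{lem:support-contraction}. Your remark about adding a complement $V_0$ with $x_0=0$, so that the hypothesis $V=V_1\oplus\cdots\oplus V_b$ of \cref{lem:support-direct-sum} is met, makes explicit a small point the paper leaves implicit.
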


\begin{proof}
The inclusion from right to left is immediate. Conversely, write $x=x_1+\cdots+x_\ell$ with $x_i\in\Lambda^kV_i$ and $x\in\Lambda^kL$. By \cref{lem:support-direct-sum}, $\bigoplus_i\operatorname{supp}(x_i)=\operatorname{supp}(x)\le L$. Hence $\operatorname{supp}(x_i)\le L\cap V_i$ for every $i$, so \cref{lem:support-contraction} gives $x_i\in\Lambda^k(L\cap V_i)$.
\end{proof}

We use the following elementary discrete-convexity bound. If $0\le L\le r/2$, $0\le a_i\le r-L$, and $\sum_i a_i\le r$, then
\begin{equation}
 \sum_i\binom{a_i}{k}\le \binom{r-L}{k}+\binom Lk,
 \label{eq:capped-convex-concentration}
\end{equation}
with the second term omitted when there is only one $a_i$. Indeed, transfer mass from smaller positive coordinates to larger ones until the cap $r-L$ is reached; convexity of $d\mapsto\binom dk$ leaves the extremal profile $(r-L,L,0,\ldots)$.

\begin{lemma}[Uniform localization of attachment steps]\label{lem:bounded-radius-clustering}
There exists $K_{k,\Gamma}>D_{k,\Gamma}$, depending only on $k$ and $\Gamma$, such that every ordering of a $\Gamma$-low-excess family $\mathcal H$ has the following property: every attachment step $H_j$ satisfies $d_{\mathrm{Gr}}(H_j,H_i)\le K_{k,\Gamma}$ for some earlier new-block step $H_i$.
\end{lemma}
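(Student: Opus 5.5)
The plan is to pass to a subfamily in which the earlier new-block spaces are nearly pure blocks, and then use \cref{lem:pure-block-intersection} together with the capped convexity bound \cref{eq:capped-convex-concentration}. If $H_j$ were far from every earlier new block, its exterior overlap would fall short of $\beta d$ by a large constant multiple of $r^{k-1}$. That would contradict the low-excess budget, which is inherited by subfamilies.

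\emph{Step 1: reduce to the new blocks.} Fix an ordering and an attachment step $H_j$. Let $\mathcal N$ be the earlier new-block steps and set $W:=\sum_{i\in\mathcal N}H_i$. Every step before $j$ is either a new block or an attachment. An attachment with $d_i=r$ adds nothing to the span, and the attachments with $0<d_i<r$ add $\sum(r-d_i)\le C_{k,\Gamma}$ dimensions in total by \cref{eq:global-endpoint-defect}. Hence $\dim\mathcal O_{<j}-\dim W\le C_{k,\Gamma}$, and so
\[
d':=\dim(H_j\cap W)\ge r-D_{k,\Gamma}-C_{k,\Gamma}.
\]
Now consider the subfamily $\mathcal H'=\mathcal N\cup\{H_j\}$, ordered with the members of $\mathcal N$ first in their original order and $H_j$ last. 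Because $G_k$ is monotone under passage to subfamilies, \cref{eq:fixed-excess-sum} gives $\beta d'-e'\le G_k(\mathcal H')\le\Gamma r^{k-1}$, where $e'=\dim\bigl(\Lambda^kH_j\cap\sum_{i\in\mathcal N}\Lambda^kH_i\bigr)$.

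\emph{Step 2: make the blocks direct up to an $O(r^{k-1})$ error.} For each $i\in\mathcal N$, choose a complement $V_i\le H_i$ of $H_i\cap\sum_{i'<i,\,i'\in\mathcal N}H_{i'}$. Then the $V_i$ form a direct sum, and $\dim V_i\ge r-D_{k,\Gamma}$. Moreover $V_i\ne H_i$ for at most $\lfloor C_{k,\Gamma}\rfloor$ indices, and the total codimension is $\sum_i(r-\dim V_i)\le C_{k,\Gamma}$. Since $\Lambda^kH_i$ equals $\Lambda^kV_i$ plus a space of dimension at most $(r-\dim V_i)\binom{r}{k-1}$, we get
\[
\textstyle\sum_{i\in\mathcal N}\Lambda^kH_i\subseteq\bigoplus_i\Lambda^kV_i+Y,\qquad \dim Y\le C_{k,\Gamma}\binom r{k-1}.
\]
Applying \cref{lem:pure-block-intersection} with $L=H_j$ then yields
\[
e'\le\sum_i\binom{a_i}{k}+C_{k,\Gamma}\binom r{k-1},\qquad a_i:=\dim(H_j\cap V_i),
\]
where the $a_i$ satisfy $\sum_ia_i\le r$.

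\emph{Step 3: force concentration.} Fix a constant $L_0$, to be chosen depending only on $k$ and $\Gamma$, and suppose every $a_i\le r-L_0$. Applying \cref{eq:capped-convex-concentration} with $L=L_0\le r/2$ gives $\sum\binom{a_i}{k}\le\binom{r-L_0}{k}+\binom{L_0}{k}$. This is at most
\[
\binom rk-L_0\binom{r-L_0}{k-1}+O_{L_0}(1)\le\binom rk-\tfrac12L_0\binom r{k-1}
\]
for large $r$. Combining with $\beta d'\ge\binom rk-(D_{k,\Gamma}+C_{k,\Gamma})\beta$ and $\beta\le\binom r{k-1}$, we obtain
\[
\beta d'-e'\ge\bigl(\tfrac12L_0-D_{k,\Gamma}-2C_{k,\Gamma}\bigr)\binom r{k-1}.
\]
Since $\binom r{k-1}\ge r^{k-1}/(k-1)!$ up to lower-order terms, this exceeds $\Gamma r^{k-1}$ once $L_0$ is a large enough constant depending only on $k$ and $\Gamma$. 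This contradicts Step 1. Therefore some $a_i>r-L_0$, so $\dim(H_j\cap H_i)\ge a_i>r-L_0$, which gives $d_{\mathrm{Gr}}(H_j,H_i)<L_0$. Taking $K_{k,\Gamma}:=\max\{L_0,D_{k,\Gamma}+1\}$ proves the claim; small values of $r$ are absorbed by enlarging the constant, since $d_{\mathrm{Gr}}\le r$.

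The main obstacle is Step 2. The earlier attachments and the nonzero overlaps among new blocks contaminate the exterior sum, so \cref{lem:pure-block-intersection} does not apply directly. Discarding the attachments through the subfamily trick, and absorbing the boundedly many imperfect blocks into the error space $Y$, is the point that needs care. In particular, one must check that $\dim Y$ is controlled by the total codimension $C_{k,\Gamma}$ and not by the number of blocks, which may be unbounded.
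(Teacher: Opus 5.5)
Your proof is correct and follows the paper's argument in all essentials. You use the same direct pure pieces $V_i$ (the paper's $A_i^\circ$) with total codimension at most $C_{k,\Gamma}$, apply \cref{lem:pure-block-intersection} to $\Lambda^kH_j$, and invoke the capped convexity bound \cref{eq:capped-convex-concentration} to force some $a_i\ge r-O_{k,\Gamma}(1)$. The only difference is how the earlier attachments are handled: you discard them by passing to the subfamily $\mathcal N\cup\{H_j\}$, at an $O_{k,\Gamma}(1)$ cost in overlap, and then bound the contamination block by block; the paper keeps them and bounds $\dim(\mathcal E_{<j}/\mathcal T_0)$ by a global dimension count that uses the low excess of the prefix.
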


\begin{proof}
Fix an attachment step $H_j$, and let the earlier new-block spaces be $A_1,\ldots,A_\ell$. For the prefix $H_1,\ldots,H_{j-1}$, \cref{eq:global-endpoint-defect} gives
\[
 \sum_{\substack{2\le h<j\\ H_h\text{ new-block}}}d_h
 +
 \sum_{\substack{2\le h<j\\ H_h\text{ attachment}}}(r-d_h)
 \le C_{k,\Gamma}.
\]
There are $\ell$ new-block steps in the prefix, including $H_1$, so the ordinary span increment formula becomes
\[
 \dim \mathcal O_{<j}
 =\ell r
 -\sum_{\substack{2\le h<j\\ H_h\text{ new-block}}}d_h
 +\sum_{\substack{2\le h<j\\ H_h\text{ attachment}}}(r-d_h).
\]
Hence
\[
 \dim \mathcal O_{<j}=\ell r+O_{k,\Gamma}(1).
\]
Also the prefix excess is at most the full excess, because \cref{eq:fixed-excess-sum} is a sum of nonnegative terms. Therefore
\begin{equation}
 \dim \mathcal E_{<j}
 =\beta\dim \mathcal O_{<j}+O_{k,\Gamma}(r^{k-1})
 =\ell\binom rk+O_{k,\Gamma}(r^{k-1}). \label{eq:prefix-exterior-newblocks}
\end{equation}

Proceed through the earlier new-block spaces in their order. Choose $A_i^\circ\le A_i$ complementary in $A_i$ to $A_i\cap(A_1+\cdots+A_{i-1})$. Then the sum of the $A_i^\circ$ is direct. Put $c_i=\operatorname{codim}_{A_i}A_i^\circ$. Since $c_i$ is at most the ordinary overlap at the step at which $A_i$ was created, $\sum_{i=1}^{\ell}c_i\le C_{k,\Gamma}$, and set $\mathcal T_0:=\bigoplus_{i=1}^{\ell}\Lambda^kA_i^\circ\le\mathcal E_{<j}$. Using Pascal's identity,
\[
 0\le
 \ell\binom rk-\dim \mathcal T_0
 =\sum_{i=1}^{\ell}\left(
 \binom rk-\binom{r-c_i}{k}\right)
 \le
 \binom{r-1}{k-1}\sum_i c_i
 =O_{k,\Gamma}(r^{k-1}).
\]
Together with \cref{eq:prefix-exterior-newblocks}, this yields
\begin{equation}
 \dim(\mathcal E_{<j}/\mathcal T_0)\le C_0r^{k-1} \label{eq:previous-span-close-to-pure-blocks}
\end{equation}
for a constant $C_0=C_0(k,\Gamma)$ independent of $t$.

Put $L_j=H_j\cap \mathcal O_{<j}$ and $\dim L_j=d_j\ge r-D_{k,\Gamma}$. Since $\mathcal E_{<j}\le\Lambda^k\mathcal O_{<j}$, \cref{eq:exterior-intersection} gives $\Lambda^kH_j\cap \mathcal E_{<j}=\Lambda^kL_j\cap \mathcal E_{<j}$. Restricting the quotient map $\mathcal E_{<j}\to \mathcal E_{<j}/\mathcal T_0$ to $(\Lambda^kL_j)\cap \mathcal E_{<j}$ and using \cref{eq:previous-span-close-to-pure-blocks} gives $\dim(\Lambda^kL_j\cap \mathcal T_0)\ge e_j-C_0r^{k-1}$. The local excess is at most the total excess, so $e_j\ge\beta d_j-\Gamma r^{k-1}$. Hence for a constant $C_1=C_1(k,\Gamma)$,
\begin{equation}
 \dim(\Lambda^kL_j\cap \mathcal T_0)
 \ge \beta d_j-C_1r^{k-1}. \label{eq:large-pure-block-overlap}
\end{equation}

By \cref{lem:pure-block-intersection}, if $a_i=\dim(L_j\cap A_i^\circ)$, then the left side equals
\[
 \sum_{i=1}^{\ell}\binom{a_i}{k},
 \qquad
 \sum_i a_i\le d_j\le r.
\]
Choose the fixed $K_{k,\Gamma}>D_{k,\Gamma}$ so large that $K_{k,\Gamma}/(k-1)!-D_{k,\Gamma}/k!>C_1+1$. For large $r$, also $r>2K_{k,\Gamma}$. If every $a_i\le r-K_{k,\Gamma}$, then \cref{eq:capped-convex-concentration} with $L=K_{k,\Gamma}$ gives
\[
 \sum_i\binom{a_i}{k}\le\binom{r-K_{k,\Gamma}}{k}+\binom{K_{k,\Gamma}}k;
\]
when $\ell=1$, the stronger bound with only the first term holds. On the other hand,
\[
 \beta(r-D_{k,\Gamma})-\binom{r-K_{k,\Gamma}}{k}-\binom{K_{k,\Gamma}}k
 =
 \left(\frac{K_{k,\Gamma}}{(k-1)!}-\frac{D_{k,\Gamma}}{k!}\right)r^{k-1}
 +O_{k,\Gamma}(r^{k-2}),
\]
which contradicts \cref{eq:large-pure-block-overlap}. Thus some $a_i\ge r-K_{k,\Gamma}$, and therefore
\[
 d_{\mathrm{Gr}}(H_j,A_i)
 =r-\dim(H_j\cap A_i)
 \le K_{k,\Gamma}.
\]
\end{proof}

Fix this choice of $K_{k,\Gamma}$ from now on.

\begin{proof}[Proof of \cref{thm:stability-intro}]
Apply \cref{lem:low-excess-endpoints}. Treat $H_1$ as the first new-block step and classify every later step by $d_j\le D_{k,\Gamma}$ or $r-d_j\le D_{k,\Gamma}$. By \cref{lem:bounded-radius-clustering}, every attachment step lies within Grassmann distance at most $K_{k,\Gamma}$ of an earlier new-block space. Assign each attachment to one such space and take the new-block spaces, in their order of appearance, as the representatives $A_1,\ldots,A_\kappa$. At a new-block step the overlap with the span of the earlier representatives is at most the corresponding $d_j$, so \cref{eq:global-endpoint-defect} gives $0\le \kappa r-\dim(A_1+\cdots+A_\kappa) \le \sum_{\substack{j\ge2\\ H_j\text{ new}}} d_j \le C_{k,\Gamma}$.
\end{proof}

\begin{corollary}[Uniform low-excess normal form]\label{cor:low-excess-normal-form-intro}
Under the hypotheses of \cref{thm:stability-intro}, let $[t]=P_1\sqcup\cdots\sqcup P_\kappa$ be the connected-component partition of the graph joining $i\ne j$ whenever $d_{\mathrm{Gr}}(H_i,H_j)<r/2$, and write $\mathcal H_a=\{H_i:i\in P_a\}$. We have:
\begin{enumerate}[label=\textup{(\roman*)}]
 \item each block has a localization representative $A_a^\ast\in\mathcal H_a$ with $d_{\mathrm{Gr}}(H,A_a^\ast)\le K_{k,\Gamma}$ for every $H\in\mathcal H_a$. Hence $d_{\mathrm{Gr}}(H_i,H_j)\le2K_{k,\Gamma}$ within one block, while  $d_{\mathrm{Gr}}(H_i,H_j)\ge r-D_{k,\Gamma}-2K_{k,\Gamma}$ across distinct blocks;
 \item the cluster top $W_a:=\sum_{i\in P_a}H_i$ satisfies $r\le\dim W_a\le r+C_{k,\Gamma}$;
 \item for every choice of representatives $A_a\in\mathcal H_a$, $0\le \kappa r-\dim(A_1+\cdots+A_\kappa)\le C_{k,\Gamma}$;
 \item
  $\mathfrak O=\kappa r+O_{k,\Gamma}(1)$ and $\mathfrak E=\kappa\binom rk+O_{k,\Gamma}(r^{k-1})$.
\end{enumerate}
Thus low exterior excess forces intrinsic Grassmann clusters of bounded diameter and top thickness, separated at scale $r$. The representative span deficiency in \textup{(iii)} is an ordinary span deficiency, distinct from the exterior excess $G_k$.
\end{corollary}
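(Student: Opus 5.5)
I will prove the normal form by comparing the intrinsic component partition with the ordering-based clusters from \cref{thm:stability-intro}. Fix any ordering. From \cref{lem:low-excess-endpoints} every step is a new-block step or an attachment, and by \cref{lem:bounded-radius-clustering} every attachment lies within $K:=K_{k,\Gamma}$ of an earlier new-block space.

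\emph{Separation of new-block spaces.} If $A_i$ is created at step $j$, then $\dim(A_i\cap A_{i'})\le d_j\le D:=D_{k,\Gamma}$ for every earlier new-block space $A_{i'}$. Hence distinct new-block spaces satisfy $d_{\mathrm{Gr}}\ge r-D$.

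\emph{Clusters coincide with components.} Let the ordering-based clusters be $\mathcal C_i$, each consisting of $A_i$ and the attachments assigned to it. Within $\mathcal C_i$, every distance is at most $2K<r/2$ for large $r$, so each $\mathcal C_i$ lies inside a single component. Now take $H\in\mathcal C_i$ and $H'\in\mathcal C_{i'}$ with $i\ne i'$. The triangle inequality for the Grassmann graph metric gives
\[
d_{\mathrm{Gr}}(H,H')\ge d_{\mathrm{Gr}}(A_i,A_{i'})-2K\ge r-D-2K>r/2 .
\]
So no edges join distinct clusters, and the components are exactly the $\mathcal C_i$. This shows that $\kappa$ and the blocks are intrinsic (independent of the ordering), and it proves the distance statements in (i), with $A_a^\ast$ the new-block representative.

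\emph{Items (ii) and (iii).} For (ii), reorder the whole family so that block $P_a$ comes first, starting with an arbitrary member. Inside this block every later step is within distance $2K<r/2$ of the first, so it cannot be a new-block step; by \cref{lem:low-excess-endpoints} it is therefore an attachment. The quantity $\dim W_a-r$ equals $\sum_{j}(r-d_j)$ over these attachment steps, which is at most $C_{k,\Gamma}$ by \eqref{eq:global-endpoint-defect}. For (iii), given any representatives $A_a\in\mathcal H_a$, order them first, one from each block. Their pairwise distances exceed $r/2$, so none of them is an attachment; each is therefore a new-block step. The deficiency $\kappa r-\dim(A_1+\cdots+A_\kappa)$ is then $\sum d_j$ over these steps, which is at most $C_{k,\Gamma}$. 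Here one must be careful: an attachment step could in principle have $d_{\mathrm{Gr}}>r/2$ from every individual earlier space while still lying in their span. \cref{lem:bounded-radius-clustering} rules this out, since it forces the attachment within distance $K$ of some earlier new-block space, which lies in one of the earlier blocks, a contradiction.

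\emph{Item (iv).} Use the ordering by blocks with representatives first. The dimension $\mathfrak O$ equals $\kappa r$ minus the representative deficiency plus the attachment increments, so $\mathfrak O=\kappa r+O(1)$ by \eqref{eq:global-endpoint-defect}. Then $\mathfrak E=\beta\mathfrak O+G_k$ with $0\le G_k\le\Gamma r^{k-1}$ gives $\mathfrak E=\kappa\binom rk+O(r^{k-1})$.

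The main obstacle is making the ordering-independent use of \cref{lem:bounded-radius-clustering} airtight: the step classification depends on the ordering, and the argument has to pass cleanly between that classification and the intrinsic component structure.
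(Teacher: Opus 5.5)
Your proof is correct and follows essentially the same route as the paper. You separate distinct new-block spaces at Grassmann distance at least $r-D_{k,\Gamma}$ and combine this with localization to identify the ordering-based clusters with the components of the $r/2$-graph. Items (ii)--(iv) then follow from the global endpoint budget applied to suitable orderings, and in (iii) it is the localization lemma, not pairwise separation, that rules out attachments among the representatives; your first justification there ("pairwise distances exceed $r/2$") should simply be dropped in favor of the correction you give.

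The remaining differences are cosmetic. You reorder the whole family instead of invoking heredity of the excess for the subfamily. You also apply the triangle inequality directly rather than first proving that the localizing new-block space is unique.
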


\begin{figure}[H]
\centering
\begin{tikzpicture}[
 x=1cm,y=1cm,
 every node/.style={font=\scriptsize},
 rep/.style={circle,draw=black,fill=white,inner sep=2.3pt,line width=0.8pt},
 pt/.style={circle,fill=black,inner sep=1.4pt}
]
\coordinate (Ca) at (2.3,0);
\coordinate (Cb) at (7.5,0);
\draw[black!60,dashed,line width=0.8pt] (Ca) circle (1.55);
\draw[black!60,dashed,line width=0.8pt] (Cb) circle (1.55);
\node[above=1.62cm] at (Ca) {$\mathcal H_a\subseteq B_{\mathrm{Gr}}(A_a^\ast,K_{k,\Gamma})$};
\node[above=1.62cm] at (Cb) {$\mathcal H_b\subseteq B_{\mathrm{Gr}}(A_b^\ast,K_{k,\Gamma})$};

\node[rep] (Aa) at (Ca) {};
\node[below=2pt of Aa] {$A_a^\ast$};
\node[pt] (a1) at (1.25,0.55) {};
\node[pt] (a2) at (3.25,0.55) {};
\node[pt] (a3) at (2.95,-0.75) {};
\node[pt] (a4) at (1.45,-0.70) {};

\node[rep] (Ab) at (Cb) {};
\node[below=2pt of Ab] {$A_b^\ast$};
\node[pt] (b1) at (6.45,0.55) {};
\node[pt] (b2) at (8.45,0.55) {};
\node[pt] (b3) at (8.15,-0.75) {};
\node[pt] (b4) at (6.65,-0.70) {};

\draw[-{Latex[length=1.4mm]},thin] (Aa)--(a3);
\node[right=1pt] at ($(Aa)!0.55!(a3)$) {$\le K_{k,\Gamma}$};
\draw[{Latex[length=1.4mm]}-{Latex[length=1.4mm]},thin] (a1)--(a2);
\node[above=1pt] at ($(a1)!0.5!(a2)$) {$\le2K_{k,\Gamma}$};
\draw[{Latex[length=1.4mm]}-{Latex[length=1.4mm]},thin] (a2)--(b1);
\node[above=2pt] at ($(a2)!0.5!(b1)$) {$\ge r-D_{k,\Gamma}-2K_{k,\Gamma}$};

\node[align=center] at (2.3,-2.15) {$W_a=\sum_{i\in P_a}H_i$\\$0\le\dim W_a-r\le C_{k,\Gamma}$};
\node[align=center] at (7.5,-2.15) {$W_b=\sum_{i\in P_b}H_i$\\$0\le\dim W_b-r\le C_{k,\Gamma}$};
\end{tikzpicture}
\caption{Two canonical Grassmann blocks from \cref{cor:low-excess-normal-form-intro}, illustrating localization, within-block diameter, and cross-block separation.}
\label{fig:grassmann-blocks}
\end{figure}

\begin{proof}[Proof of \cref{cor:low-excess-normal-form-intro}]
By \cref{lem:low-excess-endpoints,lem:bounded-radius-clustering}, every attachment lies within distance $K_{k,\Gamma}$ of an earlier new-block space. Distinct new-block spaces $A,B$ satisfy $d_{\mathrm{Gr}}(A,B)\ge r-D_{k,\Gamma}$, so for $r>D_{k,\Gamma}+2K_{k,\Gamma}$ this new-block space is unique. Its assigned block therefore lies in the Grassmann ball $B_{\mathrm{Gr}}(A,K_{k,\Gamma})$, has diameter at most $2K_{k,\Gamma}$, and is at distance at least $r-D_{k,\Gamma}-2K_{k,\Gamma}$ from every other block. Since, for large $r$, $2K_{k,\Gamma}<r/2<r-D_{k,\Gamma}-2K_{k,\Gamma}$ these assigned blocks are exactly the connected components defining the $P_a$, proving \textup{(i)}. In any ordering, the first member of a block is a new-block step, while every later member is an attachment because it meets the first member in dimension at least $r-2K_{k,\Gamma}>D_{k,\Gamma}$.

Fix a block $P_a$ and order that subfamily with any one of its members first. By heredity of the exterior excess, its excess is at most $\Gamma r^{k-1}$. Every later member has ordinary overlap at least $r-2K_{k,\Gamma}$ with the preceding span, so for large $r$ its endpoint defect is $r-d_j$. The global budget gives $\dim\sum_{i\in P_a}H_i=r+\sum_{j\ge2}(r-d_j)\le r+C_{k,\Gamma}$. This proves part \textup{(ii)}.

Now choose one representative $A_a$ from each block. The representative subfamily again has excess at most $\Gamma r^{k-1}$. In any ordering, no later representative can be an attachment step, because localization would put it within distance $K_{k,\Gamma}$ of an earlier new-block representative from a different block, contradicting the cross-block distance bound. Thus all representatives are new-block steps. By \cref{eq:global-endpoint-defect}, $\kappa r-\dim(A_1+\cdots+A_\kappa)=\sum_{a=2}^{\kappa}d_a \le C_{k,\Gamma}$, proving part \textup{(iii)}. Finally, in any ordering there are exactly $\kappa$ new-block steps. Writing the span increment formula separately over new-block and attachment steps, $\dim(H_1+\cdots+H_t)=\kappa r-\sum_{\substack{j\ge2\\ H_j\text{ new}}}d_j+\sum_{\substack{j\ge2\\ H_j\text{ attach}}}(r-d_j)$. The absolute value of the error from $\kappa r$ is at most $C_{k,\Gamma}$ by \cref{eq:global-endpoint-defect}. Hence $\mathfrak O=\kappa r+O_{k,\Gamma}(1)$. Since $\mathfrak E=\beta \mathfrak O+G_k(\mathcal H)$, we obtain $\mathfrak E=\kappa\binom rk+O_{k,\Gamma}(r^{k-1})$. This proves part \textup{(iv)} and completes the normal form.
\end{proof}

\begin{remark}[A limitation of the uniform normal form]
The independence of the constants from $t$ is nonvacuous, and one cannot uniformly require all members of a cluster to contain a common subspace of dimension $r-O(1)$. Over $\F_q$, let $W$ be an $(r+1)$-space and take as the family all $r$-dimensional hyperplanes of $W$. Their ordinary span is $W$, while $\sum_{H\in\Gr(r,W)}\Lambda^kH=\Lambda^kW$ because every decomposable $k$-vector is supported on some hyperplane. Hence the exterior excess is
\[
 \binom{r+1}{k}
 -
 \frac{r+1}{r}\binom rk
 =\Theta_k(r^{k-1}).
\]
The family contains $\qbinom{r+1}{r}\asymp_q q^r$ members, yet the intersection of all its members is $0$. Thus the $r+O(1)$-dimensional cluster top in \cref{cor:low-excess-normal-form-intro} cannot in general be replaced by a common $(r-O(1))$-dimensional subspace contained in every member of the cluster.
\end{remark}

The sharp quantized refinements of these fixed-$k$, fixed-$\Gamma$ bounds are given in Appendix~\ref{app:quantized-stability}.

\section{Poisson approximation in the high-target regime}\label{sec:factorial-moments}

Here the extra $q^{-c}$ factor for non-direct tuples already dominates the counting overhead, so the stability theory of \cref{sec:low-excess-stability} is not needed. For an integer-valued random variable $X$ and $t\ge1$, write $(X)_t:=X(X-1)\cdots(X-t+1)$ for the falling factorial. Poisson convergence will be proved by showing that these factorial moments converge.

\begin{theorem}[High-target resonant Poisson law]\label{thm:poisson-general-intro}
Fix $q$. At resonance, consider any sequence of integers $2\le k\le r$ and $m\ge1$ for which $c/{r^2}\longrightarrow\infty$. Assume $r\to r_\ast\in\{2,3,\ldots,\infty\}$. Then $X_{N,r}\xrightarrow{d}\Pois(C_{q,r_\ast})$, $\,\Pp(\alpha_N=r)\longrightarrow1-e^{-C_{q,r_\ast}}$, and $\,\Pp(\alpha_N=r-1)\longrightarrow e^{-C_{q,r_\ast}}$.
\end{theorem}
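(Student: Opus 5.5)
We will prove the theorem by the method of factorial moments. For each fixed $t\ge1$ we show
\[
\E(X_{N,r})_t\longrightarrow C_{q,r_\ast}^t .
\]
Since the Poisson law is determined by its moments, this gives $X_{N,r}\xrightarrow d\Pois(C_{q,r_\ast})$.

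\textbf{First moment.} At resonance $rc=m\binom rk$, so \cref{eq:first-moment} gives exactly
\[
\mu_{N,r}=\qbinom{r+c}{r}q^{-rc}=\prod_{i=1}^{r}\frac{1-q^{-(c+i)}}{1-q^{-i}} .
\]
The hypothesis $c/r^2\to\infty$ forces $c\to\infty$ and $rq^{-c}\to0$, so the numerator product tends to $1$. The denominator is $C_{q,r}^{-1}$, and $C_{q,r}\to C_{q,r_\ast}$ (trivially if $r$ stabilizes at $r_\ast<\infty$, by monotone convergence otherwise). Hence $\mu_{N,r}\to C_{q,r_\ast}$.

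\textbf{Higher factorial moments.} $\E(X_{N,r})_t$ is the sum, over ordered $t$-tuples of distinct $r$-spaces, of $q^{-m\mathfrak E}$.
\begin{itemize}
\item Direct tuples (ordinary sum direct) have $\mathfrak E=t\binom rk$, so each contributes $p_r^t$. The number of such tuples is $\prod_{j<t}\#\{H:H\cap(H_1\oplus\cdots\oplus H_j)=0\}$, which is $\qbinom Nr^t(1-o(1))$ because a uniform $r$-space meets a fixed $jr$-space nontrivially with probability $O(q^{jr+r-N})=O(q^{(t)r-c})\to0$. So direct tuples contribute $\mu_{N,r}^t(1+o(1))$.
\item Non-direct tuples satisfy $\mathfrak E\ge\beta(\mathfrak O+1)$ by \cref{eq:strict-exterior-expansion}. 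Since $m\beta=c$ at resonance, each contributes at most $q^{-c\mathfrak O-c}$.
\end{itemize}

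\textbf{Counting non-direct tuples.} Fix the overlap profile $(d_2,\dots,d_t)$, not all zero. Build the tuple step by step: the number of choices of $H_j$ with $\dim(H_j\cap\mathcal O_{<j})=d_j$ is at most $q^{O(r^2t)}\,q^{(r-d_j)(N-r)}$. To see this, choose the intersection inside a space of dimension $\le tr$, costing $q^{d_j tr}$, then extend to $H_j$, costing about $q^{(r-d_j)(N-r)}$ up to $C_q$ factors. Summing, and using $\mathfrak O=tr-\sum d_j$ with $N-r=c$, the number of tuples with a given profile is at most
\[
C^t q^{c\mathfrak O+O(t^2r^2)} .
\]
Multiplying by the weight $q^{-c\mathfrak O-c}$ gives $q^{-c+O(t^2r^2)}$. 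There are at most $(r+1)^{t}$ profiles, so the total non-direct contribution is
\[
q^{-c+O_t(r^2)+O(t\log r)}\to0 \quad\text{because } c/r^2\to\infty .
\]
The main obstacle is making this count rigorous uniformly in $r$: the factor $q^{d_j\cdot\dim\mathcal O_{<j}}$ for placing the intersection must be absorbed into $q^{O(tr^2)}$, not $q^{O(c)}$. This is exactly where the hypothesis $c\gg r^2$ is used, replacing the stability theory of \cref{sec:low-excess-stability}. Consequently $\E(X_{N,r})_t\to C_{q,r_\ast}^t$ and the Poisson limit follows.

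\textbf{Maximum null dimension.} We have $\Pp(\alpha_N\ge r)=\Pp(X_{N,r}>0)\to1-e^{-C_{q,r_\ast}}$.
\begin{itemize}
\item For the upper end, any null space of dimension $>r$ contains a null $(r+1)$-space. At resonance,
\[
\log_q\mu_{N,r+1}=(r+1)(c-1)-m\binom{r+1}k+O(1)=-\tfrac{r+1}{k}\,m\binom{r}{k-1}\cdot\tfrac{k}{r+1}+\cdots ,
\]
and via \cref{eq:locking-identity} with $\Delta_N(r)=0$ and \cref{eq:delta-step} we have $\Delta_N(r+1)\le-k$. This gives $\mu_{N,r+1}\le C_q q^{-(r+1)}\to0$, so Markov's inequality shows $\Pp(\alpha_N\ge r+1)\to0$.
\item For the lower end, $\Delta_N(r-1)\ge k$ gives $\mu_{N,r-1}\ge q^{r-1}\to\infty$. \Cref{lem:pair-overlap} applies (its hypotheses hold since $N-2(r-1)=c-r+2\to\infty$), so \cref{eq:second-moment-consequence} and Chebyshev's inequality give $\Pp(X_{N,r-1}>0)\to1$. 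In the finite-$r_\ast$ case we instead use the direct bound $\mu_{N,r-1}\ge q^{c}\to\infty$ together with the same pair identity.
\end{itemize}
Combining these gives $\Pp(\alpha_N=r)\to1-e^{-C_{q,r_\ast}}$ and $\Pp(\alpha_N=r-1)\to e^{-C_{q,r_\ast}}$.
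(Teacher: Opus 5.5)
\textbf{Gap: the upper end of the $\alpha_N$ statement fails when $r_\ast<\infty$.}

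Your factorial-moment argument is correct and is essentially the paper's. Direct tuples are handled by the same union bound, and non-direct tuples by $\mathfrak E\ge\beta(\mathfrak O+1)$ together with $m\beta=c$. The only difference is the tuple count. The paper chooses the span first and bounds the tuples of span dimension $\mathfrak O$ by $\qbinom{N}{\mathfrak O}\qbinom{\mathfrak O}{r}^t$. This gives exponent $(\mathfrak O-r)(tr-\mathfrak O)-c\le(t-1)^2r^2/4-c$ without overlap profiles. Your step-by-step count gives $t^2r^2-c$, which is equally sufficient for fixed $t$. The ``obstacle'' you flag is in fact immediate, since $d_j\le r$ and $\dim\mathcal O_{<j}\le tr$.

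The theorem explicitly allows $r\to r_\ast<\infty$, that is, $r$ eventually constant. In that case your bound $\mu_{N,r+1}\le C_qq^{-(r+1)}$ is a fixed positive constant independent of $N$. Markov's inequality then does not give $\Pp(\alpha_N\ge r+1)\to0$. The loss comes from replacing $\Delta_N(r+1)=-k-m\binom{r-1}{k-2}$ by $-k$. You need to keep the second term. At resonance $m\binom{r-1}{k-1}=kc$, so
\[
\tfrac{r+1}{k}\Delta_N(r+1)=-(r+1)\bigl(1+\tfrac{(k-1)c}{r+1-k}\bigr),
\]
which tends to $-\infty$ because $c\to\infty$. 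This is exactly what the paper uses; it is $-A_r$ in \cref{prop:rplusone-space-moment-obstruction}, and it covers finite and infinite $r_\ast$ at once. You made precisely this correction at the lower end, switching to $\mu_{N,r-1}\ge q^{c}$, but not at the upper end.

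There is also a smaller omission at the lower end. The case $k=r$ is allowed here (take $m=cr$). Then \cref{lem:pair-overlap} at dimension $r-1$ does not apply, because it requires $k\le r-1$. In that case $\Lambda^kH=0$ for every $(r-1)$-space $H$, so all of them are null and the conclusion is trivial. This is how the paper handles it.
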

The proof is deferred until after the factorial-moment estimate \cref{prop:factorial-error}.

For resonant parameters, write $X_r=X_{N,r}$ and $\mu_r=\mu_{N,r}$. At resonance, reindexing the Gaussian-binomial product gives
\begin{equation}
 \mu_r
 =\frac{\prod_{j=c+1}^{c+r}(1-q^{-j})}
 {\prod_{j=1}^{r}(1-q^{-j})}.
 \label{eq:mean-resonance}
\end{equation}
Consequently, if $c\to\infty$ and $r\to r_\ast\in\{2,3,\ldots,\infty\}$, then $\mu_r\to C_{q,r_\ast}$; if $r\to\infty$, more precisely $\mu_r=C_q+O(q^{-r}+q^{-c})$. Indeed, the numerator in \cref{eq:mean-resonance} is $1+O(q^{-c})$ uniformly in both $q$ and $r$, and $C_{q,r}=C_q+O(q^{-r})$, since $C_q\le C_2$.

\begin{proposition}[Uniform factorial-moment error]\label{prop:factorial-error}
There is an absolute constant $A_{\mathrm{fac}}\ge1$ such that, uniformly over all prime powers $q$, all resonant parameters $2\le k\le r$, $m\ge1$, and all integers $t\ge1$ with $tr\le N$,
\[
 \left|\E(X_r)_t-\mu_r^t\right|
 \le
 A_{\mathrm{fac}}^t t\left(
 q^{-c+(t-1)r}
 +r q^{-c+(t-1)^2r^2/4}
 \right).
\]
\end{proposition}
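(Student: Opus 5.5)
We expand the falling factorial moment as a sum over ordered $t$-tuples of distinct $r$-spaces:
\[
 \E(X_r)_t=\sum_{(H_1,\ldots,H_t)}q^{-m\mathfrak E}.
\]
We then split the tuples according to whether the ordinary sum $\mathcal O$ is direct. For direct tuples, \cref{eq:exterior-expansion} holds with equality, so $\mathfrak E=t\binom rk$ and each tuple contributes $p_r^t=q^{-trc}$. The number of ordered direct tuples is $\prod_{i=0}^{t-1}q^{ir}\,\qbinom{N-ir}{r}$: choose $H_{i+1}$ complementary to an $ir$-space, and count via the quotient $V_N/\mathcal O_{<i+1}$ together with the affine lift, giving $q^{ir\cdot r}\qbinom{N-ir}{r}$ choices. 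This requires $tr\le N$. Comparing with $\mu_r^t=\qbinom Nr^tq^{-trc}$, the ratio is $\prod_i q^{ir^2}\qbinom{N-ir}r/\qbinom Nr$. By \cref{eq:qbinom-bounds} and the exact product formula, each factor equals $1-O(q^{-(N-(i+1)r)})$ up to multiplicative errors $(1+O(q^{-c+ir}))$. Since $N-r=c$, the $i$th factor differs from $1$ by $O(q^{-c+ir})$. Summing over $i\le t-1$ and using $\mu_r\le C_2$ bounds the direct-tuple deficit from $\mu_r^t$ by $A^t t\,q^{-c+(t-1)r}$. Because $\mu_r^t$ also counts tuples that are non-direct, this first term absorbs the discrepancy between all tuples and direct tuples at the level of expectation, so only the positive non-direct contribution remains to be bounded.

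For non-direct tuples, \cref{eq:strict-exterior-expansion} gives $m\mathfrak E\ge m\beta(\mathfrak O+1)=\tfrac{c}{r}(\mathfrak O+1)$. Here the resonance $m\binom rk=rc$ means $m\beta=c/r$. Writing $\Phi=tr-\mathfrak O\ge1$, the weight is therefore at most $q^{-trc}\,q^{c\Phi/r-c/r}$. This is not yet strong enough, so I would use the finer bound \cref{eq:deficiency-exterior-expansion} via the overlap profile of \cref{lem:overlap-profile}: $m\mathfrak E\ge m\bigl(t\binom rk-\sum_j\binom{d_j}k\bigr)$. I would then count tuples step by step with prescribed overlaps $d_j$. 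The number of choices of $H_j$ meeting $\mathcal O_{<j}$ in dimension $d_j$ is at most $C\,q^{d_j(\dim\mathcal O_{<j}-d_j)+(r-d_j)(N-r)+\ldots}$. Relative to the full count $\qbinom Nr$, this is a factor at most $C_2^2\,q^{-d_j(N-\dim\mathcal O_{<j}-r+d_j)}$, in analogy with \cref{eq:pt-bound}. Since $\dim\mathcal O_{<j}\le(j-1)r$, the factor is at most $q^{-d_j(c-(j-1)r)+(\text{lower order})}$.

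Combining these, the contribution of a step with overlap $d_j$ relative to independence is bounded by
\[
 q^{-d_j(c-(j-1)r)+m\binom{d_j}{k}}.
\]
As in the proof of \cref{lem:pair-overlap}, $m\binom{d_j}k\le c\,d_j(d_j/r)^{k-1}$, so the exponent is at most $-d_j(1-x)c+d_j(j-1)r$ with $x=d_j/r$. I would split into two cases. If $d_j\le r-1$, the penalty is at least $c\,\min(d_j,r-d_j)/2$ minus an entropy of $(j-1)rd_j\le (t-1)r^2$. If $d_j=r$, there is no exterior gain, but the entropy is small: $H_j\le\mathcal O_{<j}$ has at most $q^{r((t-1)r-r)}$ choices, against the saving of a full factor $\qbinom Nr^{-1}\approx q^{-rc}$ from independence. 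At least one non-endpoint or non-direct step is needed to obtain the extra $q^{-c}$, and the minimal configuration is a single $d_j=1$ step giving $q^{-c+(j-1)r}$. The worst entropy arises from configurations whose ambient span is about $(t-1)r/2$ on average with respect to the choices of $d$. I expect the crude bound $q^{-c+(t-1)^2r^2/4}$ to come from maximizing $d(\dim\mathcal O-d)$-type entropy, namely $\max_d d((t-1)r-d)=(t-1)^2r^2/4$, while paying only one factor $q^{-c}$. The factor $r$ counts the possible values of $d_j$, and the factor $t$ counts the possible positions of the first non-direct step. After the first non-direct step, I would bound the remaining steps crudely by their expected counts, giving the factor $A^t\mu_r^{t}\le A^t$.

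The main obstacle is the bookkeeping in this last step. The bound should use only one $q^{-c}$ gain while controlling the entropy of all remaining steps uniformly, including contained steps ($d_j=r$), where no exterior gain is available and the tuple count must itself be absorbed. I would handle this by ordering each non-direct tuple so that its first non-direct step is as in the proof of \cref{thm:exterior-intro}: either a nontrivially intersecting pair, or the last element of a minimal non-direct subfamily, which has $e_\ell=0$. I would bound everything after that step by the trivial inequality $\mathfrak E\ge\mathfrak E_{<j}$ together with Grassmannian counts, absorbing constants into $A_{\mathrm{fac}}^t$.
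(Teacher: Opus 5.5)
Your direct-tuple estimate is correct and is essentially the paper's; you count direct tuples exactly where the paper uses a union bound on non-directness. The gap is in the non-direct part, and it begins with an arithmetic slip. Since $\beta=\frac1r\binom rk$ and resonance reads $m\binom rk=rc$, one has $m\beta=c$, not $c/r$. With the correct value, \cref{eq:strict-exterior-expansion} bounds the joint null probability of every non-direct tuple by $q^{-c\mathfrak O-c}$.

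The route you abandoned as ``not yet strong enough'' is exactly the paper's proof. Group the non-direct tuples by $\mathfrak O$. Choosing the span and then each $H_i$ inside it bounds their number by $\qbinom N{\mathfrak O}\qbinom{\mathfrak O}r^t\le C_q^{t+1}q^{\mathfrak O(N-\mathfrak O)+tr(\mathfrak O-r)}$. Using $N=r+c$, the contribution is then
\[
C_q^{t+1}q^{\mathfrak O(N-\mathfrak O)+tr(\mathfrak O-r)-c\mathfrak O-c}=C_q^{t+1}q^{(\mathfrak O-r)(tr-\mathfrak O)-c}\le C_q^{t+1}q^{-c+(t-1)^2r^2/4}.
\]
Summing over the at most $tr$ values of $\mathfrak O$ gives the second term. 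The factor $tr$ comes from the range of $\mathfrak O$, not from the values of $d_j$ or the position of a first bad step.

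The step-by-step scheme you use instead does not close, and two of its claims are false.

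\emph{Contained steps are not cheap.} Under the profile bound $e_j\le\binom{d_j}k$, a contained step ($d_j=r$) carries no exterior penalty. Relative to $\mu_r$ it therefore costs a factor of about $\qbinom{\dim\mathcal O_{<j}}{r}\approx q^{r(\dim\mathcal O_{<j}-r)}$. There is no net ``saving of $\qbinom Nr^{-1}$'': that saving is cancelled exactly by the missing factor $p_r=q^{-rc}$.

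\emph{The remaining steps cannot be bounded by expected counts.} Consider tuples consisting of $s$ direct spaces followed by $t-s$ spaces inside their sum. In the natural order your accounting gives only $q^{r^2(s-1)(t-s)}$, with no factor $q^{-c}$ at all. This entropy is exactly what produces $(t-1)^2r^2/4$ after optimizing over $s$. So the steps after the first non-direct one cannot be bounded by their expected counts with total $A^t\mu_r^t$. Using $\mathfrak E\ge\mathfrak E_{<j}$ for those steps is worse still, since each later step then loses the full factor $p_r$.

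Reordering to expose a gaining step, as in the proof of \cref{thm:exterior-intro}, is the right instinct. But the remaining steps would still have to be counted globally in the new order, which amounts to reproving $\mathfrak E\ge\beta(\mathfrak O+1)$ together with the span-dimension count above. As written, the bookkeeping you flag as the main obstacle is the whole content of the non-direct bound, and it is left unresolved.
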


\begin{proof}
Take the fixed $A_{\mathrm{fac}}$ large enough to dominate all absolute constants in this proof. Choose $r$-spaces independently and uniformly from $\Gr(r,V_N)$. If $H_1+\cdots+H_j$ is direct, then it has dimension $jr$, and the probability that $H_{j+1}$ meets this span nontrivially is at most
\[
 \frac{q^{jr}-1}{q-1}\frac{q^r-1}{q^N-1}
 =O(q^{jr-c}).
\]
A union bound over $1\le j<t$ therefore shows that the proportion of ordered direct $t$-tuples differs from $1$ by at most $O\!\left(tq^{-c+(t-1)r}\right)$. Since $\mu_r\le C_q$, their contribution differs from $\mu_r^t$ by at most $A_{\mathrm{fac}}^t t q^{-c+(t-1)r}$.

Consequently,
\begin{equation}
\Pp\!\left(H_1+\cdots+H_t\text{ is not direct}\right)
\le 4tq^{-c+(t-1)r}.
\label{eq:direct-tuple-deficit}
\end{equation}
For any fixed ordinary span dimension $\mathfrak O$, the number of ordered $t$-tuples with this span dimension is at most
\begin{equation}
 \qbinom {N}{\mathfrak O}\qbinom {\mathfrak O}{r}^t
 \le
 C_q^{t+1}q^{\mathfrak O(N-\mathfrak O)+tr(\mathfrak O-r)}.
 \label{eq:span-tuple-count}
\end{equation}
For non-direct tuples, \cref{thm:exterior-intro} gives joint exterior rank at least $\beta(\mathfrak O+1)$. Hence, using $m\beta=c$, their total contribution is at most
\[
 C_q^{t+1}
 q^{\mathfrak O(N-\mathfrak O)+tr(\mathfrak O-r)-c\mathfrak O-c}
 =
 C_q^{t+1}q^{(\mathfrak O-r)(tr-\mathfrak O)-c}
 \le
 C_q^{t+1}q^{-c+(t-1)^2r^2/4}.
\]
There are at most $tr$ values of $\mathfrak O$. Using $C_q\le C_2$ and the fixed choice of $A_{\mathrm{fac}}$ proves the claim.
\end{proof}

\begin{proof}[Proof of \cref{thm:poisson-general-intro}]
For every fixed $t$, \cref{prop:factorial-error} and $c/r^2\to\infty$ give $\E(X_{N,r})_t-\mu_r^t\to0$. The hypothesis also implies $c\to\infty$, so \cref{eq:mean-resonance} yields $\mu_r\to C_{q,r_\ast}$, and hence $\E(X_{N,r})_t\to C_{q,r_\ast}^t$. Thus convergence of all fixed factorial moments to those of $\Pois(C_{q,r_\ast})$ gives the Poisson convergence.

It remains to locate the maximum null dimension. At dimension $r+1$, the exponential part of the first moment is $(r+1)(c-1)-m\binom{r+1}{k}=-(r+1)\left(1+\frac{(k-1)c}{r+1-k}\right)$, which tends to $-\infty$. Thus $\Pp(\alpha_N\ge r+1)\to0$ by the Gaussian-binomial upper bound and Markov's inequality.

For dimension $r-1$, if $k=r$ then every $(r-1)$-subspace is null. Otherwise $k\le r-1$, and $m\binom{r-1}{k}=c(r-k)\le(r-1)(c+1)$. The exponential part of its first moment is $(r-1)(c+1)-m\binom{r-1}{k}=c(k-1)+r-1\to\infty$. Moreover, for the parameters $r'=r-1$ and $c'=c+1$ one has $c'-r'=c-r+2\to\infty$. The pair-overlap estimate and its second-moment consequence therefore imply that a null $(r-1)$-subspace exists with probability tending to one. Consequently $\Pp(\alpha_N\in\{r-1,r\})\to1$. On this event, $\alpha_N=r$ if and only if $X_{N,r}>0$. The limiting probabilities now follow from the Poisson law.
\end{proof}

For every fixed $k,m$, the resonant subsequence is infinite. Indeed, if $r-1=kn$, then $(k-1)\binom{kn}{k-1}=kn\binom{kn-1}{k-2}$; since $\gcd(k,k-1)=1$, one has $k\mid\binom{r-1}{k-1}$. Thus $c_r=(m/k)\binom{r-1}{k-1}$ is integral for every $r\equiv1\pmod k$.

\section{Uniform quantitative Poisson approximation at fixed exterior order}\label{sec:fixed-target}
Throughout this section, unless stated otherwise, $k\ge3$ is fixed; whenever the random-model parameters are present, we work at resonance, uniformly over prime powers $q$ and integers $m\ge1$. Combining a high-excess entropy split with growing exterior stability up to normalized excess $\Gamma=O(r)$ yields factorial control through order $\rho_{m,k}(r)$.

At resonance,
\begin{equation}
 c=\frac mr\binom rk=\frac{m}{k!}r^{k-1}+O_k(mr^{k-2}),
 \qquad
 c\asymp_k mr^{k-1}.
 \label{eq:resonant-c-scale}
\end{equation}

\subsection{Growing exterior stability for every fixed order}
The low-excess stability theorem \cref{thm:stability-intro} can be made uniform throughout a linearly growing range of excess parameters. The proposition below makes this precise and provides the deterministic input that improves the higher-order Poisson rate. We refer to the range $1\le \Gamma\le \eta_k r$ appearing there as the \emph{growing-excess range}.

\begin{proposition}[Growing exterior stability]\label{prop:growing-exterior-stability}
There exist constants $\eta_k,C_k^{\mathrm{end}},A_k^{\mathrm{loc}}>0$, depending only on $k$, such that whenever $\Gamma$ lies in the growing-excess range, every ordering of a $\Gamma$-low-excess family $\mathcal H$ satisfies
\begin{equation}
 \sum_{j=2}^t\min\{d_j,r-d_j\}\le C_k^{\mathrm{end}}\Gamma.
 \label{eq:growing-endpoint-budget}
\end{equation}
With $D_\Gamma:=\lceil C_k^{\mathrm{end}}\Gamma\rceil$ and $r>2D_\Gamma$, call $H_j$ a \emph{new-block step} if $d_j\le D_\Gamma$ and an \emph{attachment step} if $r-d_j\le D_\Gamma$. Then every attachment step lies within Grassmann distance at most $A_k^{\mathrm{loc}}\Gamma$ of a unique earlier new-block space. Consequently, the canonical blocks of \cref{cor:low-excess-normal-form-intro} have top thickness $O_k(\Gamma)$, cross-block representative span deficiency $O_k(\Gamma)$, and pairwise Grassmann diameter $O_k(\Gamma)$, uniformly in $t$ and in the field.

Moreover, if $\kappa$ is the number of canonical blocks, there exist representatives $A_1,\ldots,A_\kappa$ and subspaces $A_i^\circ\le A_i$ such that $A_1^\circ+\cdots+A_\kappa^\circ$ is direct, $\sum_{i=1}^{\kappa}\operatorname{codim}_{A_i}A_i^\circ=O_k(\Gamma)$, and, with $\mathcal T=\bigoplus_{i=1}^{\kappa}\Lambda^kA_i^\circ$,
\[
\mathcal T\le\mathcal E,\qquad \dim(\mathcal E/\mathcal T)=O_k(\Gamma r^{k-1}).
\]
\end{proposition}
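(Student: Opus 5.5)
We rerun the argument of \cref{sec:low-excess-stability} while tracking how every constant depends on $\Gamma$. The first stage is the endpoint budget \cref{eq:growing-endpoint-budget}. It needs no change: the bound \cref{eq:uniform-endpoint-profile}, $f_{r,k}(d)\ge\tau_kr^{k-1}\min\{d,r-d\}$, holds for every $0\le d\le r$ with $\tau_k$ depending only on $k$. Summing it via \cref{eq:fixed-excess-sum} gives $\sum_j\min\{d_j,r-d_j\}\le\Gamma/\tau_k$, so we may take $C_k^{\mathrm{end}}=1/\tau_k$. This is linear in $\Gamma$ for all $\Gamma$. We then choose $\eta_k$ small enough that $D_\Gamma\le 2C_k^{\mathrm{end}}\eta_kr$ stays below a fixed small fraction of $r$, say $r/8$. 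With this choice $r>2D_\Gamma$, and the new-block and attachment classes are disjoint and exhaustive, as before.

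The second stage is localization, following \cref{lem:bounded-radius-clustering}. The prefix span satisfies $\dim\mathcal O_{<j}=\ell r+O(\Gamma)$. Since $\sum_ic_i\le C_k^{\mathrm{end}}\Gamma$, the pure-block space $\mathcal T_0$ has codimension at most $\binom{r-1}{k-1}C_k^{\mathrm{end}}\Gamma+\Gamma r^{k-1}$ in $\mathcal E_{<j}$, which gives \cref{eq:previous-span-close-to-pure-blocks} with $C_0=O_k(\Gamma)$. Hence $\sum_i\binom{a_i}{k}\ge\beta d_j-C_1'\Gamma r^{k-1}$, where $C_1'$ depends only on $k$. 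Now set $K=A\Gamma$, and suppose every $a_i\le r-K$. The capped convexity bound \cref{eq:capped-convex-concentration} then gives a contradiction provided
\[
\beta(r-D_\Gamma)-\binom{r-K}{k}-\binom Kk>C_1'\Gamma r^{k-1}.
\]
This is where the argument must be handled carefully, because $K$ and $D_\Gamma$ now grow linearly in $r$ and the earlier $O(r^{k-2})$ error terms no longer stay small. I would bound the left side directly instead of expanding asymptotically. Discrete concavity gives $\binom rk-\binom{r-K}{k}\ge K\binom{r-K}{k-1}\ge K\,c_k r^{k-1}$ whenever $K\le r/4$. The remaining terms satisfy $\beta D_\Gamma\le D_\Gamma r^{k-1}/k!$ and $\binom Kk\le (K/r)^{k-1}K r^{k-1}/k!$. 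Taking $A$ large compared with $C_k^{\mathrm{end}}$ and $C_1'$, and then $\eta_k$ small enough that $A\eta_k\le 1/4$ and $(A\eta_k)^{k-1}$ is small, the linear gain $c_kKr^{k-1}$ dominates. This yields $A_k^{\mathrm{loc}}=A$.

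Uniqueness of the nearby new-block space holds because two new-block spaces $A,B$ satisfy $d_{\mathrm{Gr}}(A,B)\ge r-D_\Gamma>2A_k^{\mathrm{loc}}\Gamma$ once $\eta_k$ is small. The block consequences then follow verbatim from the proof of \cref{cor:low-excess-normal-form-intro}, with $O(1)$ replaced by $O_k(\Gamma)$. The graph threshold $r/2$ still separates within-block diameter $2A\Gamma$ from cross-block distance $r-D_\Gamma-2A\Gamma$. Top thickness and representative deficiency are bounded by the endpoint budget $C_k^{\mathrm{end}}\Gamma$.

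For the final claim, order the family so that the $\kappa$ representatives come first, in their new-block order. Let $A_i^\circ$ be a complement in $A_i$ of $A_i\cap(A_1+\dots+A_{i-1})$. The codimensions sum to $\sum d\le C_k^{\mathrm{end}}\Gamma$, and the sum of the $A_i^\circ$ is direct. Clearly $\mathcal T\le\mathcal E$. To bound the quotient, use the Pascal estimate $\dim\mathcal T\ge\kappa\binom rk-\binom{r-1}{k-1}O(\Gamma)$ together with $\mathfrak E=\beta\mathfrak O+G_k\le\beta(\kappa r+C_k^{\mathrm{end}}\Gamma)+\Gamma r^{k-1}$. Together these give $\dim(\mathcal E/\mathcal T)=O_k(\Gamma r^{k-1})$.
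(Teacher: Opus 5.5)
Your proposal is correct and follows essentially the same route as the paper. The shared steps are: the endpoint budget with $C_k^{\mathrm{end}}=\tau_k^{-1}$; the localization argument of \cref{lem:bounded-radius-clustering}, rerun with the gap $\bigl[\binom rk-\binom{r-K}{k}\bigr]-\beta D_\Gamma-\binom Kk$ bounded below by a term linear in $K$ once $K/r$ is small; uniqueness from $r-D_\Gamma>2K$; and the final quotient bound from Pascal's identity together with $\mathfrak E=\beta\mathfrak O+G_k$.

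Two bookkeeping points need tightening. $K$ should be an integer, e.g.\ $K=\lceil A\Gamma\rceil$. More importantly, $\eta_k$ must be small enough to give $D_\Gamma+2K<r/2$, which is exactly the condition the paper imposes so that the canonical partition at threshold $r/2$ separates blocks. Your stated constraints $D_\Gamma\le r/8$ and $A\eta_k\le 1/4$ only yield $D_\Gamma+2K\le 5r/8$, so shrink $\eta_k$ further.
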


\begin{proof}
Recall $f_{r,k}(d)=\beta d-\binom dk$. By \cref{eq:uniform-endpoint-profile,eq:fixed-excess-sum}, summing over $j$ proves \cref{eq:growing-endpoint-budget} with the fixed choice $C_k^{\mathrm{end}}:=\tau_k^{-1}$. For $k=3$ the profile has the exact factorization
\begin{equation}
 f_{r,3}(d)=\beta d-\binom d3
 =\frac{d(r-d)(r+d-3)}6,
 \label{eq:trilinear-exact-profile}
\end{equation}
which gives the particularly transparent constant-order endpoint profile discussed earlier.

It remains to make the localization argument uniform throughout the growing-excess range. Fix an attachment step $H_j$, and let $A_1,\ldots,A_\ell$ be the preceding new-block spaces. Retain the notation $\mathcal O_{<j},\mathcal E_{<j},A_i^\circ,\mathcal T_0,L_j$ from the proof of \cref{lem:bounded-radius-clustering}. By \cref{eq:growing-endpoint-budget}, $|\dim\mathcal O_{<j}-\ell r|\le C_k^{\mathrm{end}}\Gamma$. Since the prefix excess is at most $\Gamma r^{k-1}$ and $\beta=O_k(r^{k-1})$,
\begin{equation}
 \left|\dim\mathcal E_{<j}-\ell\binom rk\right|
 \le C_{1,k}\Gamma r^{k-1}.
 \label{eq:growing-prefix-exterior}
\end{equation}
If $c_i=\operatorname{codim}_{A_i}A_i^\circ$, then $\sum_i c_i\le C_k^{\mathrm{end}}\Gamma$, and Pascal's identity gives $0\le \ell\binom rk-\dim\mathcal T_0\le \binom{r-1}{k-1}\sum_i c_i\le C_{2,k}\Gamma r^{k-1}$. Together with \cref{eq:growing-prefix-exterior}, this gives
\begin{equation}
 \dim(\mathcal E_{<j}/\mathcal T_0)\le C_{3,k}\Gamma r^{k-1}.
 \label{eq:growing-pure-quotient}
\end{equation}
The local excess is at most the total excess, hence $e_j\ge\beta d_j-\Gamma r^{k-1}$. Restricting the quotient map and using \cref{eq:growing-pure-quotient,lem:pure-block-intersection} yields
\begin{equation}
 \sum_{i=1}^{\ell}\binom{a_i}{k}
 =\dim(\Lambda^kL_j\cap\mathcal T_0)
 \ge \beta d_j-C^{(4)}_k\Gamma r^{k-1},
 \label{eq:growing-pure-overlap}
\end{equation}
where $a_i=\dim(L_j\cap A_i^\circ)$.

Choose the fixed $A_k^{\mathrm{loc}}>1$ sufficiently large and set $K_\Gamma=\lceil(A_k^{\mathrm{loc}}-1)\Gamma\rceil$. Then choose the fixed $\eta_k$ sufficiently small so that $D_\Gamma+2K_\Gamma<r/2$ throughout the stated range (in particular $D_\Gamma,K_\Gamma<r/3$). 

Suppose for contradiction that every $a_i\le r-K_\Gamma$. Since $\sum_i a_i\le r$, \cref{eq:capped-convex-concentration} with $L=K_\Gamma$ applies. Since $d_j\ge r-D_\Gamma$, it is enough to estimate the gap
\begin{align}
 \beta(r-D_\Gamma)-\binom{r-K_\Gamma}{k}-\binom{K_\Gamma}k
 &=\left[\binom rk-\binom{r-K_\Gamma}{k}\right]-\beta D_\Gamma-\binom{K_\Gamma}k.\label{eq:growing-localization-gap}
\end{align}
If $K_\Gamma\le r/3$, then
\[
 \binom rk-\binom{r-K_\Gamma}{k}
 =\sum_{h=0}^{K_\Gamma-1}\binom{r-1-h}{k-1}
 \ge c^{(5)}_k K_\Gamma r^{k-1}.
\]
Also $\beta D_\Gamma\le C^{(5)}_k\Gamma r^{k-1}$. Finally, by the fixed choice of $\eta_k$, for which $K_\Gamma/r$ is uniformly small,
\[
 \binom{K_\Gamma}k
 \le \frac{K_\Gamma^k}{k!}
 \le c^{(6)}_k\left(\frac{K_\Gamma}r\right)^{k-1}K_\Gamma r^{k-1}
 \le \frac{c^{(5)}_k}{4}K_\Gamma r^{k-1}.
\]
Thus, by the fixed choices of $A_k^{\mathrm{loc}}$ and $\eta_k$ above, \cref{eq:growing-localization-gap} exceeds $C^{(4)}_k\Gamma r^{k-1}$ throughout the growing-excess range. This contradicts \cref{eq:growing-pure-overlap}. Hence some $a_i\ge r-K_\Gamma$, and
\[
 d_{\mathrm{Gr}}(H_j,A_i)\le K_\Gamma\le A_k^{\mathrm{loc}}\Gamma.
\]
The localizing new-block space is unique. Indeed, suppose two earlier new-block spaces $A,B$ were both within distance $K_\Gamma$ of $H_j$. Then $d_{\mathrm{Gr}}(A,B)\le2K_\Gamma$. On the other hand, at the creation of the later one, its intersection with the span containing the earlier one had dimension at most $D_\Gamma$, so $d_{\mathrm{Gr}}(A,B)\ge r-D_\Gamma$, contradicting $D_\Gamma+2K_\Gamma<r$. The asserted canonical-block consequences now follow exactly as in the proof of \cref{cor:low-excess-normal-form-intro}, with every fixed constant replaced by $O_k(\Gamma)$.

Finally, order one representative from each canonical block first. Every representative after the first is a new-block step, so if $A_i^\circ$ is complementary in $A_i$ to $A_i\cap(A_1+\cdots+A_{i-1})$, then \cref{eq:growing-endpoint-budget} gives $\sum_i\operatorname{codim}_{A_i}A_i^\circ=O_k(\Gamma)$. Thus $\mathcal T=\bigoplus_i\Lambda^kA_i^\circ\le\mathcal E$ and
\[
0\le\kappa\binom rk-\dim\mathcal T
\le\binom{r-1}{k-1}\sum_i\operatorname{codim}_{A_i}A_i^\circ
=O_k(\Gamma r^{k-1}).
\]
Since $\mathfrak O=\kappa r+O_k(\Gamma)$ and $\mathfrak E=\beta\mathfrak O+G_k(\mathcal H)$, one also has $\mathfrak E\le\kappa\binom rk+O_k(\Gamma r^{k-1})$, which proves the quotient bound.
\end{proof}

Fix one choice of $\eta_k,C_k^{\mathrm{end}},A_k^{\mathrm{loc}}$ provided by \cref{prop:growing-exterior-stability} for the remainder of this section.

\begin{corollary}[Exterior-rank quantization]\label{cor:exterior-rank-quantization}
Under the hypotheses of \cref{prop:growing-exterior-stability}, if $\kappa$ is the number of canonical Grassmann blocks, then $\mathfrak O=\kappa r+O_k(\Gamma)$ and $\mathfrak E=\kappa\binom rk+O_k(\Gamma r^{k-1})$. In particular, if $\Gamma=o(r)$, then $\kappa=\operatorname{nint}\!\left(\mathfrak O/r\right)=\operatorname{nint}\!\left(\mathfrak E/\binom rk\right)$ for all sufficiently large $r$, where $\operatorname{nint}$ denotes nearest integer.
\end{corollary}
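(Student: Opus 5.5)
The two estimates are read off from \cref{prop:growing-exterior-stability}, and the nearest-integer statement then follows from them by rounding.

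\textbf{Ordinary span.} Fix any ordering in which the first member of each canonical block precedes the other members of that block. In such an ordering there are exactly $\kappa$ new-block steps, counting $H_1$. Split the span increment formula over new-block and attachment steps:
\[
\mathfrak O=\kappa r-\sum_{j\ge2,\ H_j\text{ new}}d_j+\sum_{j\ge2,\ H_j\text{ attach}}(r-d_j).
\]
For a new-block step $d_j\le D_\Gamma<r/2$, so $\min\{d_j,r-d_j\}=d_j$. For an attachment step $r-d_j\le D_\Gamma$, so the minimum is $r-d_j$. Hence both error sums are bounded by the endpoint budget \cref{eq:growing-endpoint-budget}, and $|\mathfrak O-\kappa r|\le C_k^{\mathrm{end}}\Gamma$.

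One consistency check is needed here. The count of new-block steps must equal the number $\kappa$ of canonical blocks. This holds because the proposition gives pairwise diameter $O_k(\Gamma)$ inside a block and separation at least $r-D_\Gamma-2K_\Gamma>r/2$ across blocks. So a later member of a block meets the first member in dimension at least $r-O_k(\Gamma)>D_\Gamma$ and is an attachment. The first member of a new block cannot be an attachment, since localization would place it within $K_\Gamma$ of an earlier block.

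\textbf{Exterior span.} Write $\mathfrak E=\beta\mathfrak O+G_k(\mathcal H)$ with $0\le G_k\le\Gamma r^{k-1}$, and note $\beta=\binom rk/r=O_k(r^{k-1})$. Substituting the span estimate gives
\[
\mathfrak E=\kappa\binom rk+\beta\,O_k(\Gamma)+O(\Gamma r^{k-1})=\kappa\binom rk+O_k(\Gamma r^{k-1}).
\]
Alternatively, this follows directly from $\mathcal T\le\mathcal E$, $\dim(\mathcal E/\mathcal T)=O_k(\Gamma r^{k-1})$ and $\kappa\binom rk-\dim\mathcal T=O_k(\Gamma r^{k-1})$.

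\textbf{Rounding.} Dividing the two estimates gives
\[
\frac{\mathfrak O}{r}=\kappa+O_k\!\left(\frac{\Gamma}{r}\right),\qquad \frac{\mathfrak E}{\binom rk}=\kappa+O_k\!\left(\frac{\Gamma r^{k-1}}{\binom rk}\right)=\kappa+O_k\!\left(\frac{\Gamma}{r}\right),
\]
using $\binom rk\asymp_k r^k$. If $\Gamma=o(r)$, both errors are below $1/2$ for large $r$, so the nearest integer to each ratio is $\kappa$.

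The implied constants depend only on $k$, because those of \cref{prop:growing-exterior-stability} do, uniformly in $t$ and in the field. The only step needing care is the identification of the new-block count with $\kappa$ for the chosen ordering, which uses the uniqueness of the localizing block from the proposition.
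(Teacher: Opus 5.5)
Your proof is correct and follows the paper's approach: split the span-increment formula over new-block and attachment steps with the endpoint budget, as in \cref{cor:low-excess-normal-form-intro}(iv) with constants replaced by $O_k(\Gamma)$, then use $\mathfrak E=\beta\mathfrak O+G_k(\mathcal H)$ and divide by $r$ and $\binom rk$. The paper's short proof cites these estimates from the proof of \cref{prop:growing-exterior-stability}; your explicit check that the new-block steps number exactly $\kappa$ spells out what it leaves implicit.
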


\begin{proof}
The two estimates were obtained in the proof of \cref{prop:growing-exterior-stability}. Dividing respectively by $r$ and $\binom rk\asymp_k r^k$ gives errors $O_k(\Gamma/r)=o(1)$, which proves the nearest-integer conclusion.
\end{proof}

\subsection{Growing low-excess configuration counts}
The stability theorem turns the geometric decomposition into an entropy bound: the main term is $q^{c\mathfrak O}$, corresponding to the ordinary dimension created by the tuple, while localization leaves only $q^{O_k(\Gamma t^2r)}$ additional freedom.

Throughout this subsection, $\Gamma$ lies in the growing-excess range.

\begin{proposition}[Uniform growing low-excess cluster count]\label{prop:growing-low-excess-cluster-count}
There exists $C_k^{\mathrm{cnt}}>0$, depending only on $k$, such that for every $t\ge1$ and every admissible $\mathfrak O$, the number of ordered distinct $\Gamma$-low-excess $t$-tuples $\mathcal H$ is at most
\begin{equation}
 q^{c\mathfrak O+C_k^{\mathrm{cnt}}\Gamma t^2r}.
 \label{eq:growing-low-excess-count}
\end{equation}
\end{proposition}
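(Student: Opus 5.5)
The plan is to count the tuples sequentially rather than first choosing the ambient span. We build $\mathcal H=(H_1,\ldots,H_t)$ one space at a time in the given order and charge each step with the ordinary dimensions it creates. The target is to show that the total charge for newly created dimensions is at most $c\mathfrak O$. Everything else should cost at most $q^{O_k(\Gamma tr)}$ per step, hence $q^{O_k(\Gamma t^2 r)}$ overall. The structural input is \cref{prop:growing-exterior-stability}, applied to the tuple itself. By heredity of the excess, it also applies to every prefix. It says that in the fixed ordering each step is either a new-block step with $d_j\le D_\Gamma$ or an attachment step with $r-d_j\le D_\Gamma$. Every attachment step is within Grassmann distance $K_\Gamma\le A_k^{\mathrm{loc}}\Gamma$ of an earlier new-block space. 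Throughout, write $o_j:=\dim\mathcal O_{<j}\le tr$, so that $o_j\ge r$ for $j\ge2$.

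The first step is to fix the combinatorial skeleton: the type of each step, each $d_j$, and for attachments the index $i$ of the localizing new-block space $A_i$ together with $\dim(H_j\cap A_i)$. This costs at most $(2t(r+1)^2)^t\le q^{O(t^2+t\log r)}$, which is absorbed into $q^{\Gamma t^2 r}$ since $\Gamma\ge1$ and $q\ge2$. Likewise, all Gaussian-binomial constants $C_q^{O(t)}\le C_2^{O(t)}$ are absorbed. The first space contributes $\qbinom Nr\le C_q q^{rc}$.

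A new-block step $H_j$ meets the fixed space $\mathcal O_{<j}$ in dimension $d_j\le D_\Gamma$. We first choose $H_j\cap\mathcal O_{<j}$ inside $\mathcal O_{<j}$, at cost $q^{d_j o_j}\le q^{D_\Gamma tr}$. We then choose the remaining $r-d_j$ directions modulo it, at cost at most $q^{(r-d_j)(N-o_j)+(r-d_j)d_j}$. Since $N-o_j\le N-r=c$ for $j\ge2$, the main term is $q^{c(r-d_j)}$, which is $c$ times the number of new ordinary dimensions. The cross term is $O(D_\Gamma r)$.

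For an attachment step, I do not choose $L_j=H_j\cap\mathcal O_{<j}$ freely inside $\mathcal O_{<j}$, since that would cost $q^{\Theta(r\cdot tr)}$. Instead I use localization. The space $H_j\cap A_i$ has codimension at most $K_\Gamma$ in the already determined $A_i$, so it is chosen at cost $q^{K_\Gamma r}$. The rest of $L_j$ consists of at most $K_\Gamma$ further vectors in $\mathcal O_{<j}$, at cost $q^{K_\Gamma tr}$. The $s=r-d_j\le D_\Gamma$ directions leaving $\mathcal O_{<j}$ cost $q^{s(N-o_j)+sr}\le q^{cs+D_\Gamma r}$.

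Multiplying over all steps, the new-dimension charges telescope to $rc+c\sum_{j\ge2}(r-d_j)=c\mathfrak O$. Every remaining factor is $q^{O_k((D_\Gamma+K_\Gamma)tr)}=q^{O_k(\Gamma tr)}$ per step. This gives \cref{eq:growing-low-excess-count}.

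The main obstacle is the attachment count. The crude bound of choosing $L_j$ inside a space of dimension up to $tr$ is fatal, so the argument genuinely needs the localizing representative $A_i$ from \cref{prop:growing-exterior-stability}. A related subtlety is that this representative must already be determined earlier in the sequential construction, which is why localization to an \emph{earlier} new-block space matters. A second point to check is that the $N-o_j\le c$ charge is valid for every step after the first. It relies on $o_j\ge r$, which holds once $H_1$ is placed. It is also why the final exponent is $c\mathfrak O$ rather than $\mathfrak O(N-\mathfrak O)$, which would be larger only through the first block.
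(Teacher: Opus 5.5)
Your strategy is the paper's. You count sequentially using the new-block/attachment dichotomy of \cref{prop:growing-exterior-stability}, localize each attachment to an earlier new-block space $A_i$, choose $H_j\cap A_i$ inside $A_i$, then the rest of $L_j$ inside $\mathcal O_{<j}$, then the outgoing directions. The skeleton, the cost of $L_j$, and the telescoping to $c\mathfrak O$ are fine.

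What is wrong as written is the count of the outgoing directions, and with it your account of where the factor $c$ comes from.

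At a new-block step with $L=H_j\cap\mathcal O_{<j}$ and $\dim L=d_j$, the number of admissible $H_j$ is exactly $q^{(r-d_j)(o_j-d_j)}\qbinom{N-o_j}{r-d_j}$. Its exponent is $(r-d_j)(N-r)=(r-d_j)c$ for every $o_j$. Your bound $q^{(r-d_j)(N-o_j)+(r-d_j)d_j}$ is false once $o_j>r+d_j$. For instance, with $d_j=0$ and $r<o_j\le N-r$, the count is at least $q^{rc}>q^{r(N-o_j)}$.

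You recover the right per-step bound only by then invoking $N-o_j\le c$, but that inequality is not the mechanism. The image in $V/\mathcal O_{<j}$ costs $(r-d_j)(N-o_j-r+d_j)$. The graph map into $\mathcal O_{<j}/L$ costs $(r-d_j)(o_j-d_j)$, which can be of order $r\cdot tr$. These two cancel exactly to $(r-d_j)c$. A cruder image bound would leave an extra $(r-d_j)^2\asymp r^2$ per new block, which $q^{O(\Gamma t^2r)}$ cannot absorb when there are $\asymp t$ new blocks and $\Gamma t\ll r$.

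The same issue appears at an attachment step. There the graph map goes into $\mathcal O_{<j}/L_j$, of dimension $o_j-r+s$, not $r$. The correct count is $q^{s(N-o_j-s)}\cdot q^{s(o_j-r+s)}=q^{sc}$, as in the paper.

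The simplest repair in both cases is to choose $H_j/L$ directly among subspaces of $V/L$, which gives $\qbinom{N-\dim L}{r-\dim L}\le C_q q^{(r-\dim L)c}$.

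For the same reason your closing remark is backwards. Since $\mathfrak O\ge r$, one has $\mathfrak O(N-\mathfrak O)\le c\mathfrak O$. What the sequential count actually avoids is the factor $q^{tr(\mathfrak O-r)}$ in \cref{eq:span-tuple-count}. With these counts corrected, your argument is the paper's proof.
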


\begin{proof}
Use \cref{prop:growing-exterior-stability}; all endpoint defects and localization radii are $O_k(\Gamma)$. The first space has $q^{rc+O(1)}$ choices. Suppose the preceding span is $\mathcal O_{<j}$, with $\mathfrak O_{<j}=\dim \mathcal O_{<j}\le tr$.

At a new-block step with $d=d_j=O_k(\Gamma)$, the exact Grassmann intersection count gives
\[
 q^{(\mathfrak O_{<j}-d)(r-d)}\qbinom{\mathfrak O_{<j}}{d}\qbinom{N-\mathfrak O_{<j}}{r-d}.
\]
Using \cref{eq:qbinom-bounds}, its exponent is at most $c(r-d)+d(\mathfrak O_{<j}-d)\le c(r-d)+O_k(\Gamma tr)$.
At an attachment step put $s=r-d_j=O_k(\Gamma)$ and choose an earlier new-block space $A_0$ within distance $O_k(\Gamma)$. 

Write $B=H_j\cap A_0$, $L_j=H_j\cap\mathcal O_{<j}$, and $h=\operatorname{codim}_{A_0}B=O_k(\Gamma)$. Since $A_0\le\mathcal O_{<j}$, we have $B\le L_j$; as $\dim B=r-h$ and $\dim L_j=r-s$, necessarily $h\ge s$. 

For fixed $h$, there are at most $\qbinom rh\le C_q q^{h(r-h)}$ choices for $B$. Once $B$ is fixed, $L_j/B$ is an $(h-s)$-space in $\mathcal O_{<j}/B$, so there are at most $\qbinom{\mathfrak O_{<j}-r+h}{h-s}\le C_q q^{(h-s)(\mathfrak O_{<j}-r+s)}$ choices for $L_j$.

Now $Q=(H_j+\mathcal O_{<j})/\mathcal O_{<j}$ is an $s$-space in $V_N/\mathcal O_{<j}$, so there are at most $C_q q^{s(N-\mathfrak O_{<j}-s)}$ choices for $Q$. For fixed $L_j$ and $Q$, the remaining choice of $H_j$ is a graph map $Q\to\mathcal O_{<j}/L_j$, contributing at most $q^{s(\mathfrak O_{<j}-r+s)}$ possibilities. Thus the choices of $Q$ and the graph map contribute at most $C_q q^{s(N-r)}=C_q q^{sc}$. Since $h,s=O_k(\Gamma)$ and $\mathfrak O_{<j}\le tr$, summing over the $O_k(\Gamma)$ possible values of $h$ and absorbing $C_q\le C_2$ gives $q^{c(r-d_j)+O_k(\Gamma tr)}$ choices for every attachment step. Endpoint profiles, the new-block/attachment pattern, and choices of localization representatives contribute only $q^{O_k(\Gamma t^2r)}$ additional possibilities, since $C_q\le C_2$. Finally, $\mathfrak O=r+\sum_{j=2}^t(r-d_j)$. Multiplying over the steps and using the fixed $C_k^{\mathrm{cnt}}$ to dominate the implied constants proves \cref{eq:growing-low-excess-count}.
\end{proof}

\begin{corollary}[Explicit high-excess contribution]\label{cor:high-excess-explicit}
For every $t\ge1$ and $\Gamma>0$, the total contribution to $\E(X_{N,r})_t$ from ordered distinct $\Gamma$-high-excess tuples is at most
\begin{equation}
 trC_q^{t+1}
 q^{(t-1)^2r^2/4-m\Gamma r^{k-1}}.\label{eq:high-excess-explicit}
\end{equation}
\end{corollary}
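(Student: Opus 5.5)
The bound will come from repeating the non-direct counting step in the proof of \cref{prop:factorial-error}, with the exterior-rank input $\mathfrak E\ge\beta(\mathfrak O+1)$ replaced by the high-excess hypothesis $\mathfrak E>\beta\mathfrak O+\Gamma r^{k-1}$. Write
\[
\E(X_{N,r})_t=\sum_{\mathcal H}q^{-m\mathfrak E(\mathcal H)},
\]
where the sum runs over ordered $t$-tuples of distinct $r$-spaces. We restrict this sum to the $\Gamma$-high-excess tuples and group them by their ordinary span dimension $\mathfrak O$. Every such tuple has $r\le\mathfrak O\le tr$, so there are at most $tr$ possible values of $\mathfrak O$. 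This accounts for the prefactor $tr$.

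Fix one value of $\mathfrak O$. We bound the number of ordered tuples with this span dimension using \cref{eq:span-tuple-count}: choose the span $\mathcal O$ in $\qbinom{N}{\mathfrak O}$ ways, then choose each $H_i\le\mathcal O$ in $\qbinom{\mathfrak O}{r}$ ways. By \cref{eq:qbinom-bounds}, the count is at most $C_q^{t+1}q^{\mathfrak O(N-\mathfrak O)+tr(\mathfrak O-r)}$.

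Each such tuple is null with probability $q^{-m\mathfrak E}$. High excess means $\mathfrak E>\beta\mathfrak O+\Gamma r^{k-1}$. At resonance $m\beta=c$, so
\[
q^{-m\mathfrak E}\le q^{-c\mathfrak O-m\Gamma r^{k-1}}.
\]
Multiplying the count by this probability gives the exponent
\[
\mathfrak O(N-\mathfrak O)+tr(\mathfrak O-r)-c\mathfrak O-m\Gamma r^{k-1}.
\]
Substitute $N=r+c$. The terms $c\mathfrak O$ cancel, and what remains is $(\mathfrak O-r)(tr-\mathfrak O)-m\Gamma r^{k-1}$. This is exactly the identity already used in the proof of \cref{prop:factorial-error}.

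Both factors $\mathfrak O-r$ and $tr-\mathfrak O$ are nonnegative and sum to $(t-1)r$. By AM--GM their product is at most $(t-1)^2r^2/4$. Each value of $\mathfrak O$ therefore contributes at most $C_q^{t+1}q^{(t-1)^2r^2/4-m\Gamma r^{k-1}}$. Summing over the at most $tr$ values of $\mathfrak O$ gives \cref{eq:high-excess-explicit}.

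The only point needing care is that the high-excess condition is used as a strict lower bound on $\mathfrak E$ beyond $\beta\mathfrak O$. Dropping the strictness only weakens the bound, so it causes no harm. Distinctness of the tuple members is never used, since the counting bound covers all ordered tuples. No step presents a genuine obstacle: the corollary is the same entropy-versus-rank computation as before, with a larger rank gap.
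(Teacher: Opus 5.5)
Your proposal is correct and follows the paper's proof essentially step for step. You fix $\mathfrak O$, apply the span-tuple count \cref{eq:span-tuple-count}, use $m\beta=c$ together with $\mathfrak E>\beta\mathfrak O+\Gamma r^{k-1}$ to cancel the $c\mathfrak O$ term, bound $(\mathfrak O-r)(tr-\mathfrak O)\le(t-1)^2r^2/4$, and sum over at most $tr$ values of $\mathfrak O$.
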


\begin{proof}
Fix the ordinary span dimension $\mathfrak O$ and apply \cref{eq:span-tuple-count}. If $\mathcal H$ is $\Gamma$-high-excess, then $\mathfrak E>\beta \mathfrak O+\Gamma r^{k-1}$, so its joint null probability is at most $q^{-m\beta \mathfrak O-m\Gamma r^{k-1}}=q^{-c\mathfrak O-m\Gamma r^{k-1}}$.

Thus the contribution for fixed $\mathfrak O$ is at most $C_q^{t+1}q^{(\mathfrak O-r)(tr-\mathfrak O)-m\Gamma r^{k-1}}$. Since $(\mathfrak O-r)(tr-\mathfrak O)\le(t-1)^2r^2/4$ and there are at most $tr$ values of $\mathfrak O$, summing gives \cref{eq:high-excess-explicit}.
\end{proof}

\subsection{Growing factorial moments at the stability scale}

\begin{proposition}[Uniform growing factorial moments at fixed exterior order]\label{prop:growing-fixed-target-moments}
There exist constants $a_k^{\mathrm{mom}},b_k^{\mathrm{mom}}>0$, depending only on $k$, such that
\begin{equation}
 1\le t\le a_k^{\mathrm{mom}}\rho_{m,k}(r)
 \quad\Longrightarrow\quad
 \left|\E(X_r)_t-\mu_r^t\right|\le q^{-b_k^{\mathrm{mom}} m r^{k-1}}.
 \label{eq:growing-fixed-target-moment-error}
\end{equation}
Moreover, the entire non-direct contribution to $\E(X_r)_t$ is at most $q^{-b_k^{\mathrm{mom}} m r^{k-1}}$ uniformly in the same range.
\end{proposition}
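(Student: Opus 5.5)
We split $\E(X_r)_t$ as a sum over ordered distinct $t$-tuples $\mathcal H$, each weighted by its joint null probability $q^{-m\mathfrak E}$. The tuples fall into three classes: direct tuples, non-direct $\Gamma_\ast$-high-excess tuples, and non-direct $\Gamma_\ast$-low-excess tuples. The threshold is $\Gamma_\ast:=\eta_k r$, the top of the growing-excess range of \cref{prop:growing-exterior-stability}. Two scale facts drive every estimate, via \cref{eq:resonant-c-scale}:
\[
c\asymp_k m r^{k-1}
\qquad\text{and}\qquad
t\le a\rho_{m,k}(r)\ \Longrightarrow\ t^2r^2\le a^2 m r^k,\quad t^2 r\le a^2 m r^{k-1}.
\]
We will take $a=a_k^{\mathrm{mom}}$ small depending on $\eta_k$, $C_k^{\mathrm{cnt}}$ and $k!$. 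All polynomial prefactors will be absorbed into the exponent; this is possible uniformly in $q\ge2$ because $C_q\le C_2$ and $mr^{k-1}\to\infty$.

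\emph{Direct tuples.} Here we reuse the proof of \cref{prop:factorial-error}. The direct part differs from $\mu_r^t$ by at most $A_{\mathrm{fac}}^t t\, q^{-c+(t-1)r}$. Since $k\ge3$ and $m\ge1$,
\[
(t-1)r\le a\sqrt m\, r^{k/2}\le a\, m r^{k-1}.
\]
Also $t\log A_{\mathrm{fac}}=o(mr^{k-1})$. Hence for small $a$ this error is at most $q^{-c/2}\le q^{-b\,m r^{k-1}}$. The condition $tr\le N$ holds automatically, because $tr\ll c$.

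\emph{High-excess tuples.} By \cref{cor:high-excess-explicit} with $\Gamma=\eta_k r$, their contribution is at most
\[
trC_q^{t+1}q^{(t-1)^2r^2/4-m\eta_k r^{k}}\le trC_2^{t+1}q^{(a^2/4-\eta_k)mr^k}.
\]
If $a^2<2\eta_k$, this is $q^{-\Omega_k(mr^k)}$, which is far below the target.

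\emph{Low-excess non-direct tuples.} These require a dyadic decomposition of the excess $G=G_k(\mathcal H)$. By \eqref{eq:strict-exterior-expansion}, non-directness forces $G\ge\beta\ge r^{k-1}/(2\,k!)$ for large $r$. We decompose into the shell $G\le r^{k-1}$, treated as $\Gamma=1$, and the shells $\Gamma r^{k-1}/2<G\le\Gamma r^{k-1}$ for dyadic $\Gamma\in[2,\eta_k r]$. Fix $\mathfrak O$ and a shell. \cref{prop:growing-low-excess-cluster-count} bounds the number of tuples by $q^{c\mathfrak O+C_k^{\mathrm{cnt}}\Gamma t^2 r}$. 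Since $m\beta=c$, each such tuple has null probability
\[
q^{-m\mathfrak E}=q^{-c\mathfrak O-mG}.
\]
Multiplying, the terms $c\mathfrak O$ cancel. The shell therefore contributes at most
\[
q^{C_k^{\mathrm{cnt}}\Gamma a^2 m r^{k-1}-m\max\{\beta,\Gamma r^{k-1}/2\}}.
\]
With $C_k^{\mathrm{cnt}}a^2<1/(4\,k!)$ this is at most $q^{-m\Gamma r^{k-1}/(8\,k!)}$. We then sum over the at most $tr$ values of $\mathfrak O$ and the $O(\log r)$ shells. This yields $q^{-b\,mr^{k-1}}$ with, say, $b=1/(16\,k!)$, for large $r$ and uniformly in $q$ and $m$. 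Combining the high-excess and low-excess bounds gives the second claim, that the whole non-direct contribution is at most $q^{-b\,mr^{k-1}}$. Adding the direct-tuple error gives \eqref{eq:growing-fixed-target-moment-error}.

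\emph{Main obstacle.} The delicate step is the lowest shell $G\approx\beta$. There the probabilistic gain is only $q^{-m\beta}\approx q^{-mr^{k-1}/k!}$. This gain must beat the localization entropy $q^{C_k^{\mathrm{cnt}}t^2r}$ coming from the stability theorem. That entropy is of the same order $mr^{k-1}$ precisely when $t\asymp\rho_{m,k}(r)$, and this comparison is what fixes $a_k^{\mathrm{mom}}$. We must also check that the $q$-uniform constants in the cluster count (in particular the factors $C_q\le C_2$ raised to the power $t$) are genuinely $q^{O(t^2r)}$ and not worse.
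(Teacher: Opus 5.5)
Your proposal is correct and follows the paper's proof essentially step for step. It uses the same three pieces: the direct-tuple deficit from \cref{eq:direct-tuple-deficit}, a high-excess cutoff handled by \cref{cor:high-excess-explicit}, and dyadic excess shells above $\beta$ controlled by \cref{prop:growing-low-excess-cluster-count}, where $t^2r\le a^2mr^{k-1}$ lets the linear-in-$\Gamma$ probabilistic gain beat the linear-in-$\Gamma$ entropy. The only difference is that you cut at the fixed level $\Gamma_*=\eta_k r$ instead of the paper's $t$-dependent $\Gamma_*=2\bigl(1+t^2/(mr^{k-3})\bigr)$, which makes the high-excess part even $q^{-\Omega_k(mr^k)}$. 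You only need to cap the top shell at $\eta_k r$ (or take the cutoff to be a power of two), so that every application of the cluster count stays inside the growing-excess range.
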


\begin{proof}
Choose the fixed $a_k^{\mathrm{mom}}>0$ to satisfy all $k$-dependent smallness requirements below, and choose $b_k^{\mathrm{mom}}>0$ below all resulting positive exponential constants. The direct tuples differ from $\mu_r^t$ by $q^{-\Omega_k(mr^{k-1})}$ uniformly for $t\le a_k^{\mathrm{mom}}\rho_{m,k}(r)$. Indeed, by \cref{eq:resonant-c-scale}, $c\asymp_k mr^{k-1}$, whereas $tr=O_k(\sqrt m\,r^{k/2})=o_k(mr^{k-1})$ uniformly for $m\ge1$. \cref{eq:direct-tuple-deficit} gives the asserted direct-tuple error for the fixed choice of $a_k^{\mathrm{mom}}$; the generic non-direct term in that preliminary proposition is not used here.

It remains to bound non-direct tuples. We choose the excess cutoff so that the high-excess contribution in \cref{eq:high-excess-explicit} is exponentially small, while the remaining tuples lie in the growing-excess range where \cref{eq:growing-low-excess-count} applies. Put $\Gamma_*=2\left(1+\frac{t^2}{m r^{k-3}}\right)$. By the fixed choice of $a_k^{\mathrm{mom}}$, $2\Gamma_*\le\eta_k r$ throughout its stated range. For tuples with $G_k>\Gamma_*r^{k-1}$, \cref{eq:high-excess-explicit} gives total contribution at most
\[
 trC_q^{t+1}
 q^{(t-1)^2r^2/4-m\Gamma_*r^{k-1}}
 \le q^{-b_{1,k}m r^{k-1}}
\]
for some $b_{1,k}>0$. Since $C_q\le C_2$, the factor $C_q^ttr$ is $q^{o_k(mr^{k-1})}$ uniformly in $q$ and $m$.

For the remaining non-direct tuples, \cref{eq:strict-exterior-expansion} gives $G_k\ge\beta$. Since $k$ is fixed, $\beta/r^{k-1}\longrightarrow1/k!$, so every such tuple satisfies $G_k\ge \mathfrak S r^{k-1}$ for a constant $\mathfrak S>0$. Split $\mathfrak S r^{k-1}\le G_k\le\Gamma_*r^{k-1}$ into half-open dyadic strata $\gamma r^{k-1}\le G_k<2\gamma r^{k-1}$. For each ordinary span dimension $\mathfrak O$, apply \cref{prop:growing-low-excess-cluster-count} with $\widehat\gamma=\max\{1,2\gamma\}$. Since $\gamma\ge\mathfrak S$, one has $\widehat\gamma=O_k(\gamma)$, and because $\gamma\le\Gamma_*$ the growing stability range applies. The number of tuples in the stratum is therefore at most $q^{c\mathfrak O+C_k'\gamma t^2r}$, whereas each has joint null probability at most $q^{-c\mathfrak O-m\gamma r^{k-1}}$. The choice $t\le a_k^{\mathrm{mom}}\rho_{m,k}(r)$ gives $t^2r\le (a_k^{\mathrm{mom}})^2m r^{k-1}$, and the fixed $a_k^{\mathrm{mom}}$ is chosen small enough that
\[
 -m\gamma r^{k-1}+C_k'\gamma t^2r
 \le-\frac m2\gamma r^{k-1}
 \le-b_{2,k}m r^{k-1}.
\]
There are at most $tr$ possible values of $\mathfrak O$ and $O(\log r+\log(2+t))$ dyadic strata; these multiplicities are $q^{o_k(mr^{k-1})}$ uniformly in $m\ge1$. Hence all non-direct tuples contribute at most $q^{-b_k^{\mathrm{mom}}mr^{k-1}}$. Combining this with the direct-tuple estimate proves \cref{eq:growing-fixed-target-moment-error}.
\end{proof}

Fix these choices of $a_k^{\mathrm{mom}}$ and $b_k^{\mathrm{mom}}$ for the remainder of this section.

\subsection{Quantitative Brun inversion}

The following elementary Bonferroni inversion converts growing factorial-moment control into total variation.

\begin{lemma}[Quantitative Brun inversion]\label{lem:quantitative-brun}
Let $X$ be a finite sum of indicators, let $\lambda>0$, and let $T\ge1$ be an integer. If $\max_{1\le j\le T+1}\left|\E(X)_j-\lambda^j\right|\le\varepsilon$, then
\[
 d_{\mathrm{TV}}\!\left(\mathcal L(X),\Pois(\lambda)\right)
 \le e^2\varepsilon
 +\frac{(2^{T+2}+2)(\lambda^{T+1}+\varepsilon)}{(T+1)!}.
\]
\end{lemma}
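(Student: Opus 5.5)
We will use the classical Bonferroni (Brun) inversion formula for $\Pp(X=\ell)$ in terms of factorial moments. Since $X$ is a finite sum of indicators, the inclusion--exclusion identity
\[
 \Pp(X=\ell)=\sum_{j\ge\ell}(-1)^{j-\ell}\binom j\ell\frac{\E(X)_j}{j!}
\]
holds as a finite sum. Its truncations at consecutive orders bracket $\Pp(X=\ell)$, so the truncation error at order $T+1$ is at most the last retained term's neighbour, namely $\binom{T+1}{\ell}\E(X)_{T+1}/(T+1)!$. The analogous identity for $\Pois(\lambda)$, with $\E(Y)_j=\lambda^j$, is an absolutely convergent series whose tail past order $T$ is bounded by the same kind of term. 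The plan therefore has three parts: handle the bulk $\ell\le T$ by comparing truncated sums, handle the tails $\ell>T$ separately, and add up.

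For $0\le\ell\le T$, truncate both expansions at $j\le T$. The difference of the truncated sums is at most
\[
 \sum_{j=\ell}^{T}\binom j\ell\frac{\varepsilon}{j!}=\frac{\varepsilon}{\ell!}\sum_{i\ge0}\frac1{i!}\le\frac{e\,\varepsilon}{\ell!}.
\]
The truncation remainders are at most $\binom{T+1}\ell\E(X)_{T+1}/(T+1)!$ and $\binom{T+1}\ell\lambda^{T+1}/(T+1)!$ respectively, using the Bonferroni bracketing for $X$ and alternating-series control for the Poisson. Here $\E(X)_{T+1}\le\lambda^{T+1}+\varepsilon$. Summing over $\ell\le T$, the first piece totals at most $e\varepsilon\sum_\ell 1/\ell!\le e^2\varepsilon$. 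The remainders total at most $2\cdot 2^{T+1}(\lambda^{T+1}+\varepsilon)/(T+1)!$, since $\sum_\ell\binom{T+1}\ell\le2^{T+1}$.

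For the tails $\ell\ge T+1$, Markov's inequality on falling factorials gives
\[
 \Pp(X\ge T+1)\le\frac{\E(X)_{T+1}}{(T+1)!}\le\frac{\lambda^{T+1}+\varepsilon}{(T+1)!}.
\]
In the same way, $\Pp(\Pois(\lambda)\ge T+1)\le\lambda^{T+1}/(T+1)!$. Then $d_{\mathrm{TV}}=\tfrac12\sum_\ell|\Pp(X=\ell)-\Pp(Y=\ell)|$ is bounded by half the bulk sum plus half the two tail masses. This is comfortably within $e^2\varepsilon+(2^{T+2}+2)(\lambda^{T+1}+\varepsilon)/(T+1)!$; the constant $2$ absorbs the tails and $2^{T+2}$ absorbs the remainders.

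The only delicate step is justifying the remainder bound for $X$ with the correct sign-independent constant. I would do this by writing $\Pp(X=\ell)=\E\bigl[\binom X\ell\,\mathbf 1_{X=\ell}\bigr]$ and expanding $\mathbf 1_{X-\ell=0}=\sum_i(-1)^i\binom{X-\ell}{i}$. The standard fact that partial sums of $\sum_i(-1)^i\binom ni$ equal $(-1)^{s}\binom{n-1}{s}$ gives the bracketing, hence error at most $\E\bigl[\binom X\ell\binom{X-\ell}{T+1-\ell}\bigr]=\binom{T+1}\ell\E(X)_{T+1}/(T+1)!$. Note that $\E(X)_{T+1}$ is controlled since $j=T+1$ is included in the hypothesis.
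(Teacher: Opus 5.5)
Your proposal is correct and follows essentially the paper's route: a truncated Bonferroni expansion $\Pp(X=\ell)\approx\sum_{j=\ell}^{T}(-1)^{j-\ell}\E(X)_j/(\ell!(j-\ell)!)$ with remainder at most $\E(X)_{T+1}/(\ell!(T+1-\ell)!)$, the matching bound for $\Pois(\lambda)$, summation over $\ell\le T$ producing $e^2\varepsilon$ and a factor $2^{T+1}/(T+1)!$, and factorial-Markov bounds for both tails. Your binomial-identity derivation of the Bonferroni remainder is fine and does not even need the indicator structure. One small point concerns the Poisson remainder $\lambda^{T+1}/(\ell!(T+1-\ell)!)$: justify it by the Lagrange form of Taylor's remainder for $e^{-\lambda}$ (as the paper does), or by running your own identity argument on $Y$. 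A Leibniz alternating-series test does not apply directly, because $\lambda^i/i!$ is not decreasing for $i<\lambda$.
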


\begin{proof}
Write $M_j=\E(X)_j$. The Bonferroni inequalities applied to the indicator family defining $X$ give, for $0\le j\le T$,
\[
 \left|\Pp(X=j)-\frac1{j!}
 \sum_{\ell=0}^{T-j}\frac{(-1)^\ell M_{j+\ell}}{\ell!}\right|
 \le\frac{M_{T+1}}{j!(T+1-j)!}.
\]
For a Poisson variable $Y\sim\Pois(\lambda)$, Taylor's theorem with remainder applied to $e^{-\lambda}$ gives analogously
\[
 \left|\Pp(Y=j)-\frac1{j!}
 \sum_{\ell=0}^{T-j}\frac{(-1)^\ell\lambda^{j+\ell}}{\ell!}\right|
 \le\frac{\lambda^{T+1}}{j!(T+1-j)!}.
\]
Summing the differences of the truncated series over $j\le T$ costs at most $e^2\varepsilon$, while
\[
 \sum_{j=0}^{T}\frac1{j!(T+1-j)!}
 \le\frac{2^{T+1}}{(T+1)!}.
\]
Finally, $\Pp(X>T)\le M_{T+1}/(T+1)!$, and the same factorial-Markov bound applies to the Poisson tail. Since $M_{T+1}\le\lambda^{T+1}+\varepsilon$, the displayed estimate follows.
\end{proof}

\subsection{From counts to the whole null configuration}\label{subsec:process-upgrade}

Retain the notation $\mathcal G_r,p_r,\mathcal B_r$ from \cref{thm:fixed-target-intro}. A subset $\mathcal S\subseteq\mathcal G_r$ is called \emph{direct} if $\dim\sum_{H\in\mathcal S}H=r|\mathcal S|$.

Let $\mathfrak D_r$ denote the collection of direct subsets of $\mathcal G_r$. The next elementary symmetry lemma is the mechanism that upgrades a scalar count approximation to an approximation of the entire random set.

\begin{lemma}[Orbit reduction]\label{lem:orbit-reduction}
Let a finite group $G$ act on a finite set $\Omega$, and let $\mathfrak D\subseteq2^\Omega$ be $G$-invariant. Assume that, for every $t$, the action of $G$ is transitive on $\mathfrak D_t:=\{S\in\mathfrak D:|S|=t\}$ whenever this set is nonempty. If $\mathcal U,\mathcal V\subseteq\Omega$ are random subsets whose laws are $G$-invariant, then
\begin{equation}
 d_{\mathrm{TV}}\!\left(\mathcal L(\mathcal U),\mathcal L(\mathcal V)\right)
 \le
 d_{\mathrm{TV}}\!\left(\mathcal L(|\mathcal U|),\mathcal L(|\mathcal V|)\right)
 +\Pp(\mathcal U\notin\mathfrak D)+\Pp(\mathcal V\notin\mathfrak D).
 \label{eq:orbit-reduction}
\end{equation}
\end{lemma}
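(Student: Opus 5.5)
The idea is to couple $\mathcal U$ and $\mathcal V$ by conditioning on the cardinality, using $G$-invariance and transitivity to show that, on the direct part, the conditional law given $|\cdot|=t$ is forced. Fix $t$ with $\mathfrak D_t$ nonempty. Because $\mathfrak D$ is $G$-invariant and $G$ acts transitively on $\mathfrak D_t$, for every $S\in\mathfrak D_t$ we have $\Pp(\mathcal U=S)=\Pp(\mathcal U=gS)$ for all $g\in G$. Hence $\Pp(\mathcal U=S)$ is constant on $\mathfrak D_t$, so
\[
\Pp(\mathcal U=S)=\frac{\Pp(\mathcal U\in\mathfrak D_t)}{|\mathfrak D_t|},\qquad S\in\mathfrak D_t,
\]
and the same formula holds for $\mathcal V$.

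Next I will expand the total-variation distance as half the $\ell^1$ sum over all subsets. I split this sum into subsets in $\mathfrak D$ and subsets outside $\mathfrak D$.

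For subsets outside $\mathfrak D$, the half-sum $\tfrac12\sum_{S\notin\mathfrak D}|\Pp(\mathcal U=S)-\Pp(\mathcal V=S)|$ is at most $\tfrac12\bigl(\Pp(\mathcal U\notin\mathfrak D)+\Pp(\mathcal V\notin\mathfrak D)\bigr)$.

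For subsets in $\mathfrak D$, the uniformity formula above gives
\[
\tfrac12\sum_{S\in\mathfrak D}|\Pp(\mathcal U=S)-\Pp(\mathcal V=S)|=\tfrac12\sum_t|\Pp(\mathcal U\in\mathfrak D_t)-\Pp(\mathcal V\in\mathfrak D_t)|.
\]
I then compare each term with the cardinality laws. Since $\Pp(\mathcal U\in\mathfrak D_t)=\Pp(|\mathcal U|=t)-\Pp(|\mathcal U|=t,\ \mathcal U\notin\mathfrak D)$, the triangle inequality gives
\[
|\Pp(\mathcal U\in\mathfrak D_t)-\Pp(\mathcal V\in\mathfrak D_t)|\le|\Pp(|\mathcal U|=t)-\Pp(|\mathcal V|=t)|+\Pp(|\mathcal U|=t,\mathcal U\notin\mathfrak D)+\Pp(|\mathcal V|=t,\mathcal V\notin\mathfrak D).
\]
Summing over $t$ and halving bounds the $\mathfrak D$-part by
\[
d_{\mathrm{TV}}\bigl(\mathcal L(|\mathcal U|),\mathcal L(|\mathcal V|)\bigr)+\tfrac12\bigl(\Pp(\mathcal U\notin\mathfrak D)+\Pp(\mathcal V\notin\mathfrak D)\bigr).
\]

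Adding the two parts yields exactly \cref{eq:orbit-reduction}. The inequality is in fact slightly stronger than needed: the non-direct contributions enter with total coefficient one rather than two.

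There is no real obstacle here. The only point requiring care is the case where $\mathfrak D_t$ is empty, but then both $\Pp(\mathcal U\in\mathfrak D_t)$ and $\Pp(\mathcal V\in\mathfrak D_t)$ vanish and the term drops out. Note also that transitivity is needed only on $\mathfrak D_t$, not on all $t$-subsets of $\Omega$.
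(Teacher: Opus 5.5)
Your proof is correct and is essentially the paper's argument. It uses the same ingredients: uniformity on each $\mathfrak D_t$ from invariance plus transitivity, the split of the $\ell^1$ sum into the $\mathfrak D$ and non-$\mathfrak D$ parts, and the identity $\Pp(\mathcal U\in\mathfrak D_t)=\Pp(|\mathcal U|=t)-\Pp(|\mathcal U|=t,\ \mathcal U\notin\mathfrak D)$ followed by the triangle inequality. Your closing remark is off, though: the bound you obtain, $d_{\mathrm{TV}}(\mathcal L(|\mathcal U|),\mathcal L(|\mathcal V|))+\Pp(\mathcal U\notin\mathfrak D)+\Pp(\mathcal V\notin\mathfrak D)$, is exactly the stated inequality, which already has coefficient one, so it is not stronger than needed.
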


\begin{proof}
Put $a_t=\Pp(\mathcal U\in\mathfrak D_t)$ and $b_t=\Pp(\mathcal V\in\mathfrak D_t)$. By invariance and transitivity, conditional on belonging to $\mathfrak D_t$ both random sets are uniform on $\mathfrak D_t$. Hence the contribution of $\mathfrak D$ to twice the total-variation distance is $\sum_t|a_t-b_t|$, while the contribution of its complement is at most $\Pp(\mathcal U\notin\mathfrak D)+\Pp(\mathcal V\notin\mathfrak D)$. If $u_t=\Pp(|\mathcal U|=t),\quad v_t=\Pp(|\mathcal V|=t)$, and $u_t^{\mathrm{out}},v_t^{\mathrm{out}}$ are the corresponding probabilities of being outside $\mathfrak D$, then $a_t=u_t-u_t^{\mathrm{out}}$ and $b_t=v_t-v_t^{\mathrm{out}}$.

Therefore $\sum_t|a_t-b_t|\le 2d_{\mathrm{TV}}\!\left(\mathcal L(|\mathcal U|),\mathcal L(|\mathcal V|)\right)+\Pp(\mathcal U\notin\mathfrak D)+\Pp(\mathcal V\notin\mathfrak D)$, which gives \cref{eq:orbit-reduction}.
\end{proof}

\begin{proposition}[Uniform directness at resonance]\label{prop:uniform-direct-fixed}
\begin{equation}
\Pp(\mathcal Z_{N,r}\notin\mathfrak D_r)+\Pp(\mathcal B_r\notin\mathfrak D_r)
\le\exp\!\left(-4a_k^{\mathrm{TV}}\Psi_r\right).
\label{eq:uniform-direct-fixed}
\end{equation}
\end{proposition}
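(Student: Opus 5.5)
The plan is a truncation at a growing cardinality level $T$, combined with the non-direct bound already proved in \cref{prop:growing-fixed-target-moments}. Fix a small constant $a'_k\in(0,a_k^{\mathrm{mom}}]$ and set $T:=\lfloor a'_k\rho_{m,k}(r)\rfloor$. Since $k\ge3$, we have $\rho_{m,k}(r)\ge r^{1/2}\to\infty$, so $T\to\infty$ uniformly in $q$ and $m$. The event that $\mathcal Z_{N,r}$ is not direct is contained in the union of two events. Either $X_{N,r}\ge T$, or $X_{N,r}<T$ and then $\mathcal Z_{N,r}$ is itself a non-direct null family of size $t<T$. For the second event, any enumeration of $\mathcal Z_{N,r}$ is an ordered distinct non-direct null $t$-tuple, so the union bound gives
\[
\Pp(\mathcal Z_{N,r}\notin\mathfrak D_r)\le \Pp(X_{N,r}\ge T)+\sum_{t=2}^{T-1}\frac{1}{t!}\,\E\bigl[\#\{\text{ordered distinct non-direct null }t\text{-tuples}\}\bigr].
\]

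For the sum, I would apply the last clause of \cref{prop:growing-fixed-target-moments}: in the range $t\le a_k^{\mathrm{mom}}\rho_{m,k}(r)$, the entire non-direct contribution to $\E(X_r)_t$ is at most $q^{-b_k^{\mathrm{mom}}mr^{k-1}}$. Summing over $t<T$ gives at most $Tq^{-b_k^{\mathrm{mom}}mr^{k-1}}$. This is far below $e^{-\Psi_r}$, since $\Psi_r^2\lesssim mr^{k-2}\log^2(2+mr)$, which is $o(mr^{k-1}\log q)$.

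For the tail, I would use the factorial Markov bound
\[
\Pp(X_{N,r}\ge T)\le \frac{\E(X_r)_T}{T!}\le \frac{\mu_r^T+q^{-b_k^{\mathrm{mom}}mr^{k-1}}}{T!}\le \frac{C_2^T+1}{T!},
\]
using $\mu_r\le C_q\le C_2$. Stirling's formula then gives $\exp(-(1-o(1))T\log T)$. Because $\log T=\log\rho_{m,k}(r)+O_k(1)$ and $\rho_{m,k}(r)\to\infty$, this is $\exp(-\Omega_k(\Psi_r))$ with a constant depending only on $a'_k$.

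The Bernoulli set $\mathcal B_r$ is handled by the same split, with the exact formula $\E(|\mathcal B_r|)_t=\mu_r^t$. Fix a subset of size $t$; by exchangeability, the probability that it is non-direct equals the proportion of non-direct tuples among ordered distinct $t$-tuples. By \cref{eq:direct-tuple-deficit}, this proportion is at most $4tq^{-c+(t-1)r}$. Hence the non-direct contribution for each $t<T$ is at most $4t\mu_r^tq^{-c+tr}/t!$. Since $c\asymp_k mr^{k-1}$ while $tr\le Tr=o(mr^{k-1})$, this is again $q^{-\Omega_k(mr^{k-1})}$. The tail $\Pp(|\mathcal B_r|\ge T)\le\mu_r^T/T!$ is bounded exactly as above.

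The main obstacle is bookkeeping of constants rather than any new idea. The final constant $a_k^{\mathrm{TV}}$ has to be chosen small enough that $4a_k^{\mathrm{TV}}\Psi_r$ lies below both the Stirling exponent $\tfrac12 T\log T$ and the exponential term $b_k^{\mathrm{mom}}mr^{k-1}\log q$. This must hold uniformly in $q$ and $m$, and for all $r$ past a threshold depending only on $k$. I would check two points explicitly. First, $T\log T\ge c'_k\,\rho\log(2+\rho)$ once $\rho\ge r^{1/2}$ is large. Second, the prefactors $T$ and $C_2^T$ are absorbed, since $C_2^T=e^{O(T)}=e^{o(T\log T)}$.
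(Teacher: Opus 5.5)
Your proposal is correct and is essentially the paper's own proof: truncate at $T\asymp\rho_{m,k}(r)$, bound the tail by the factorial Markov inequality plus Stirling, bound the non-direct event on $\{X_r<T\}$ by $\sum_{s}\E R_s/s!$ using the non-direct clause of \cref{prop:growing-fixed-target-moments}, and treat $\mathcal B_r$ by the same split together with \cref{eq:direct-tuple-deficit}. The slips are only cosmetic: $\E(|\mathcal B_r|)_t=(|\mathcal G_r|)_tp_r^t\le\mu_r^t$ is an inequality, not an identity, and keeping the exact form cancels the factor $|\mathcal G_r|^t/(|\mathcal G_r|)_t$ that you implicitly need when passing from the i.i.d.\ bound \cref{eq:direct-tuple-deficit} to distinct tuples; also, the comparison you actually need is $\Psi_r=o(mr^{k-1})$, which holds uniformly in $m$, whereas your stated $\Psi_r^2=o(mr^{k-1})$ fails once $\log m\gtrsim\sqrt r$.
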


\begin{proof}
For $t\ge2$, let $R_t$ be the number of ordered distinct non-direct $t$-tuples contained in $\mathcal Z_{N,r}$. Thus $\E R_t$ is exactly the non-direct contribution to $\E(X_r)_t$.

Choose $T=\left\lfloor \theta_k^{\mathrm{dir}}\rho_{m,k}(r)\right\rfloor$ with a fixed $0<\theta_k^{\mathrm{dir}}<a_k^{\mathrm{mom}}/2$. Then uniformly for $2\le t\le T+1$,
\[
 \E R_t\le q^{-b_k^{\mathrm{mom}}mr^{k-1}},
 \qquad
 \left|\E(X_r)_t-\mu_r^t\right|\le q^{-b_k^{\mathrm{mom}}mr^{k-1}}.
\]

Choose the fixed $a_k^{\mathrm{TV}}>0$ sufficiently small so that the Stirling bound below yields the estimate $\exp\!\left(-4a_k^{\mathrm{TV}}T\log T\right)$, and so that all later absorptions in the proof of \cref{thm:fixed-target-intro} remain valid.

If $\mathcal Z_{N,r}$ is non-direct and $X_r=s\le T$, then every ordering of its $s$ members is a non-direct $s$-tuple, so $R_s=s!$. Therefore
\begin{align*}
 \Pp(\mathcal Z_{N,r}\notin\mathfrak D_r)
 &\le \Pp(X_r>T)+\sum_{s=2}^{T}\frac{\E R_s}{s!}\\
 &\le \frac{\E(X_r)_{T+1}}{(T+1)!}+e q^{-b_k^{\mathrm{mom}}mr^{k-1}}\\
 &\le \frac{\mu_r^{T+1}+q^{-b_k^{\mathrm{mom}}mr^{k-1}}}{(T+1)!}+e q^{-b_k^{\mathrm{mom}}mr^{k-1}}.
\end{align*}
Since $\mu_r\le C_2$, Stirling's formula and the fixed choice of $a_k^{\mathrm{TV}}$ bound this by $\exp(-4a_k^{\mathrm{TV}}T\log T)$.

Conditional on $|\mathcal B_r|=s$, the set $\mathcal B_r$ is uniform among the $s$-subsets of $\mathcal G_r$. Let $D_s$ be the number of ordered direct $s$-tuples. By \cref{eq:direct-tuple-deficit}, uniformly for $s\le T$,
\[
 |\mathcal G_r|^s-D_s\le 4s q^{-c+(s-1)r}|\mathcal G_r|^s.
\]
Also $(|\mathcal G_r|)_s/|\mathcal G_r|^s=1-o(1)$ uniformly for $s\le T$: indeed $T^2=O_k(mr^{k-2})$, whereas \cref{eq:resonant-c-scale} gives $rc\asymp_k mr^k$ and $c\asymp_k mr^{k-1}$. Thus $|\mathcal G_r|\ge q^{rc}$ is exponentially larger than $T^2$. Consequently
\[
 \Pp(\mathcal B_r\notin\mathfrak D_r\mid |\mathcal B_r|=s)
 \le 8s q^{-c+(s-1)r}
 =q^{-\Omega_k(mr^{k-1})}
\]
uniformly for $s\le T$, since $Tr=o_k(mr^{k-1})$ uniformly in $m\ge1$. Finally,
\[
 \Pp(|\mathcal B_r|>T)
 \le\frac{\E\!\left[(|\mathcal B_r|)_{T+1}\right]}{(T+1)!}
 =\frac{(|\mathcal G_r|)_{T+1}p_r^{T+1}}{(T+1)!}
 \le\frac{\mu_r^{T+1}}{(T+1)!}.
\]
Combining these bounds and using $T\asymp_k\rho_{m,k}(r)$ proves \cref{eq:uniform-direct-fixed}.
\end{proof}

\begin{lemma}[Bernoulli to Poisson point process]\label{lem:bernoulli-poisson-process}
Let $\Omega$ be finite, let $\mathcal B\subseteq\Omega$ retain each point independently with probability $p$, and put $\Xi_{\mathcal B}=\sum_{x\in\mathcal B}\delta_x$. If $\Pi$ is the Poisson point process on $\Omega$ with intensity $p\sum_{x\in\Omega}\delta_x$, then $d_{\mathrm{TV}}\!\left(\mathcal L(\Xi_{\mathcal B}),\mathcal L(\Pi)\right)\le|\Omega|p^2$. The same bound holds between $\operatorname{Bin}(|\Omega|,p)$ and $\Pois(|\Omega|p)$.
\end{lemma}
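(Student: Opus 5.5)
The natural route is a coupling, since both objects are determined by independent coin flips indexed by $\Omega$. Let $(U_x)_{x\in\Omega}$ be independent, and for each $x$ couple a Bernoulli$(p)$ variable $I_x$ with a $\Pois(p)$ variable $P_x$ so that $\Pp(I_x\ne P_x)$ is as small as possible. Then $\Xi_{\mathcal B}=\sum_x I_x\delta_x$ has the Bernoulli-subset law. By the superposition property of independent Poisson variables on a finite set, $\Pi=\sum_x P_x\delta_x$ is exactly the Poisson point process with intensity $p\sum_x\delta_x$. The coupling inequality then gives
\[
 d_{\mathrm{TV}}\bigl(\mathcal L(\Xi_{\mathcal B}),\mathcal L(\Pi)\bigr)\le\Pp(\exists x: I_x\ne P_x)\le\sum_{x\in\Omega}d_{\mathrm{TV}}\bigl(\mathrm{Ber}(p),\Pois(p)\bigr),
\]
where we use that the maximal coupling attains the single-site total-variation distance.

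The only computation is the one-point distance. Comparing the point masses at $0$, $1$ and $\ge2$ gives
\[
 2d_{\mathrm{TV}}=|(1-p)-e^{-p}|+|p-pe^{-p}|+(1-e^{-p}-pe^{-p}).
\]
Since $e^{-p}\ge1-p$, this simplifies to $2p(1-e^{-p})\le2p^2$. Hence $d_{\mathrm{TV}}(\mathrm{Ber}(p),\Pois(p))\le p^2$. Summing over $x$ gives the bound $|\Omega|p^2$. This step needs care only with the sign in the first absolute value, and it is the main place an error could enter.

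For the scalar statement, note that the total mass is a measurable function of the counting measure. Total variation does not increase under pushforward, and the total masses are $\operatorname{Bin}(|\Omega|,p)$ and $\Pois(|\Omega|p)$, the latter again by Poisson superposition. So the same bound $|\Omega|p^2$ transfers immediately.
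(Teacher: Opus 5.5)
Your proof is correct and follows the paper's argument. The paper likewise computes $d_{\mathrm{TV}}(\operatorname{Ber}(p),\Pois(p))=p(1-e^{-p})\le p^2$ at a single site, invokes subadditivity of total variation over product measures (which your independent maximal couplings plus union bound prove), and pushes forward under the total-mass map. Your three-term computation, with the sign settled by $e^{-p}\ge 1-p$, just fills in the one-site identity that the paper states without derivation.
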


\begin{proof}
At one site, $d_{\mathrm{TV}}\!\left(\operatorname{Ber}(p),\Pois(p)\right)=p(1-e^{-p})\le p^2$. For product measures, total variation satisfies $d_{\mathrm{TV}}\!\left(\bigotimes_{x\in\Omega}P_x,\bigotimes_{x\in\Omega}Q_x\right)\le\sum_{x\in\Omega}d_{\mathrm{TV}}(P_x,Q_x)$. Applying this with $P_x=\operatorname{Ber}(p)$ and $Q_x=\Pois(p)$ gives $d_{\mathrm{TV}}\!\left(\mathcal L(\Xi_{\mathcal B}),\mathcal L(\Pi)\right)\le|\Omega|p^2$. Applying the total-mass map gives the binomial--Poisson bound.
\end{proof}

\subsection{Proof of the uniform critical random-set theorem}
\label{subsec:proof-uniform-critical-random-set}

\begin{proof}[Proof of \cref{thm:fixed-target-intro}]
We first prove the scalar approximation. Set $T=\left\lfloor\frac{a_k^{\mathrm{mom}}}{2}\rho_{m,k}(r)\right\rfloor$. Then, uniformly for $1\le t\le T+1,\quad\left|\E(X_r)_t-\mu_r^t\right|\le q^{-b_k^{\mathrm{mom}}mr^{k-1}}$.

The condition $tr\le N$ required in the preliminary direct-tuple estimates holds for every $t\le T+1$, since $(T+1)r=O_k(\sqrt m\,r^{k/2})=o_k(N)$ uniformly for $m\ge1$, by \cref{eq:resonant-c-scale}. Since $\mu_r\le C_q\le C_2$, applying \cref{lem:quantitative-brun} to $X_r$ with Poisson mean $\mu_r$, together with Stirling's formula, gives
\[
\frac{(2C_2)^{T+1}}{(T+1)!}=\exp(-T\log T+O(T)).
\]
Because $T\asymp_k\rho_{m,k}(r)$, the fixed choice of $a_k^{\mathrm{TV}}$ gives
\begin{equation}
d_{\mathrm{TV}}\!\left(\mathcal L(X_r),\Pois(\mu_r)\right)
\le \exp\!\left(-4a_k^{\mathrm{TV}}\Psi_r)\right)
= e^{-4a_k^{\mathrm{TV}}\Psi_r}.
\label{eq:critical-scalar-tv-rate}
\end{equation}
In particular $d_{\mathrm{TV}}\!\left(\mathcal L(X_r),\Pois(\mu_r)\right)\le e^{-a_k^{\mathrm{TV}}\Psi_r}$ proving \cref{eq:critical-count-poisson}.

We now upgrade the count law to the entire random subset. Since $|\mathcal B_r|\sim\operatorname{Bin}(|\mathcal G_r|,p_r)$ and $|\mathcal G_r|p_r=\mu_r$, \cref{lem:bernoulli-poisson-process} gives $d_{\mathrm{TV}}\!\left(\mathcal L(|\mathcal B_r|),\Pois(\mu_r)\right)\le |\mathcal G_r|p_r^2=\mu_rp_r$. Hence, by \cref{eq:critical-scalar-tv-rate},
\begin{equation}
d_{\mathrm{TV}}\!\left(\mathcal L(X_r),\mathcal L(|\mathcal B_r|)\right)
\le e^{-4a_k^{\mathrm{TV}}\Psi_r}+\mu_rp_r.
\label{eq:count-to-bernoulli-count}
\end{equation}

The group $\mathrm{GL}(V_N)$ acts transitively on the direct $t$-subsets of $\mathcal G_r$ whenever this collection is nonempty. Indeed, given two such subsets, choose a bijection between their members, choose bases in the corresponding summands, and extend the resulting independent sets to bases of $V_N$; the induced linear automorphism carries one direct $t$-subset to the other. The law of $\mathcal B_r$ is plainly invariant under this action. The law of $\mathcal Z_{N,r}$ is also invariant: for $g\in\mathrm{GL}(V_N)$, the map $\Theta_N\circ\Lambda^k g^{-1}$ is again uniform in $\operatorname{Hom}(\Lambda^kV_N,\F_q^m)$, and its null configuration is $g\mathcal Z_{N,r}$.

Applying \cref{lem:orbit-reduction} with $\mathfrak D_r$, then using \cref{prop:uniform-direct-fixed,eq:count-to-bernoulli-count}, gives
\[
d_{\mathrm{TV}}\!\left(\mathcal L(\mathcal Z_{N,r}),\mathcal L(\mathcal B_r)\right)
\le 2e^{-4a_k^{\mathrm{TV}}\Psi_r}+\mu_rp_r.
\]
Under the identification of a subset with its simple counting measure, \cref{lem:bernoulli-poisson-process} and the triangle inequality likewise give
\[
d_{\mathrm{TV}}\!\left(\mathcal L(\Xi_r),\mathcal L(\Pi_r)\right)
\le 2e^{-4a_k^{\mathrm{TV}}\Psi_r}+2\mu_rp_r.
\]
Since $p_r=q^{-rc}$, $\mu_r\le C_2$, and $rc\gg_k\Psi_r$, both right-hand sides are at most $e^{-a_k^{\mathrm{TV}}\Psi_r}$ for all sufficiently large $r$, uniformly in $q$ and $m$. This proves \cref{eq:bernoulli-subset-tv,eq:poisson-process-tv}.

Finally, \cref{eq:mean-resonance} and $C_q\le C_2$ give, for an absolute constant $B>0$, $|\mu_r-C_q|\le Bq^{-r}$, uniformly for all sufficiently large $r$; here $c\ge r$ since $k\ge3$ and $c\asymp_k mr^{k-1}$ by \cref{eq:resonant-c-scale}. The elementary bound $d_{\mathrm{TV}}\!\left(\Pois(\mu_r),\Pois(C_q)\right)\le |\mu_r-C_q|$ and the triangle inequality with \cref{eq:critical-count-poisson} therefore give
\[
d_{\mathrm{TV}}\!\left(\mathcal L(X_{N,r}),\Pois(C_q)\right)
\le Bq^{-r}+e^{-a_k^{\mathrm{TV}}\Psi_r},
\]
which proves \cref{eq:uniform-count-to-Cq}.

Along every resonant sequence $q=q_r$ and $m=m_r\ge1$, we have $q_r^{-r}\to0$ and $\Psi_r\longrightarrow\infty$, so \cref{eq:uniform-count-to-Cq} yields $d_{\mathrm{TV}}\!\left(\mathcal L(X_{N,r}),\Pois(C_{q_r})\right)\longrightarrow0$. If moreover $q_r\to\infty$, then $C_{q_r}\to1$, and hence $X_{N,r}\xrightarrow d\Pois(1)$.
\end{proof}

\subsection{Rare null \texorpdfstring{$(r+1)$}{(r+1)}-spaces and a high-moment obstruction}
The proof above controls factorial moments through order $\rho_{m,k}(r)$. At larger factorial orders, rare null $(r+1)$-spaces provide a distinct obstruction.

The mechanism can be summarized directly at the level of expectations. Every null $(r+1)$-space contains $L_r=\qbinom{r+1}{r}\asymp_q q^r$ null $r$-dimensional hyperplanes. Moreover, for $t\ge2$, two distinct hyperplanes determine their ambient $(r+1)$-space. Consequently, the ordered $t$-tuples arising from different null $(r+1)$-spaces do not overlap, and $\E(X_r)_t\ge (L_r)_t\,\E X_{N,r+1}$. Thus an exponentially small expected number of larger null spaces can nevertheless produce a large contribution to sufficiently high factorial moments; \cref{prop:rplusone-space-moment-obstruction} quantifies this effect.

Before doing so, we record a converse rigidity statement: if an $(r+1)$-space contains too many null hyperplanes, then the whole space is null; see \cref{lem:null-hyperplane-saturation}.

\begin{lemma}[Hyperplane saturation gap in an $(r+1)$-space]
\label{lem:null-hyperplane-saturation}
Let $W$ be an $(r+1)$-dimensional vector space over $\F_q$, let $2\le k\le r$, and let $\Theta:\Lambda^kW\to\F_q^m$ be linear. Then exactly one of the following holds:
\begin{enumerate}[label=\textup{(\roman*)}]
\item $\Theta=0$, in which case every $r$-dimensional hyperplane of $W$ is null;
\item $\Theta\ne0$, in which case at most $(q^k-1)/(q-1)$ $r$-dimensional hyperplanes $H\le W$ satisfy $\Theta(\Lambda^kH)=0$.
\end{enumerate}
In particular, if more than $(q^k-1)/(q-1)$ hyperplanes of $W$ are null, then $W$ is null.
\end{lemma}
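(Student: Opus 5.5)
The plan is to dualize, so that nullity of a hyperplane becomes a divisibility condition on alternating forms. Hyperplanes of $W$ correspond bijectively to nonzero covectors $\varphi\in W^*$ up to scalars, via $H=\ker\varphi$. Identify $\operatorname{Hom}(\Lambda^kW,\F_q)$ with $\Lambda^kW^*$, and write $\Theta=(\theta_1,\dots,\theta_m)$ with $\theta_i\in\Lambda^kW^*$. Then $\Theta(\Lambda^kH)=0$ means that every $\theta_i$ lies in the annihilator of $\Lambda^k\ker\varphi$.

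\textbf{Step 1: the annihilator.} I will show that this annihilator is $\varphi\wedge\Lambda^{k-1}W^*$. Choose a basis $e_0,\dots,e_r$ of $W$ with $H=\Span\{e_1,\dots,e_r\}$ and dual basis $e_0^*=\varphi,e_1^*,\dots,e_r^*$. The wedge monomials in $e^*$ pair dually with the wedge monomials in $e$. The monomials supported inside $\{1,\dots,r\}$ span $\Lambda^kH$, so the annihilator is spanned by exactly those dual monomials that contain $e_0^*=\varphi$. Hence $H$ is null if and only if $\varphi$ divides every $\theta_i$, i.e.\ $\theta_i\in\varphi\wedge\Lambda^{k-1}W^*$ for all $i$.

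\textbf{Step 2: the dichotomy.} If $\Theta=0$, every hyperplane is null, which is case (i). If $\Theta\ne0$, fix some $\theta:=\theta_i\ne0$. Every null hyperplane then corresponds to a point $[\varphi]$ in the set
\[
D(\theta):=\{\varphi\in W^*:\varphi\ \text{divides}\ \theta\}.
\]
So it suffices to show that $D(\theta)$ is a linear subspace of dimension at most $k$. The number of projective points in such a subspace is at most $(q^k-1)/(q-1)$, which gives (ii). The final sentence of the lemma is then the contrapositive of (ii).

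\textbf{Step 3: $D(\theta)$ is a subspace.} I will use the standard criterion that, for $1\le k<\dim W^*$ and $\varphi\neq0$, $\varphi$ divides $\theta$ if and only if $\varphi\wedge\theta=0$. Here $\dim W^*=r+1>k$ because $k\le r$. To prove the criterion, write $\theta=\varphi\wedge\alpha+\beta$ with $\beta$ involving only $e_1^*,\dots,e_r^*$. Then $\varphi\wedge\theta=\varphi\wedge\beta$, and this vanishes only if $\beta=0$. Since the condition $\varphi\wedge\theta=0$ is linear in $\varphi$, $D(\theta)$ is a subspace.

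\textbf{Step 4: the dimension bound.} Let $\varphi_1,\dots,\varphi_s$ be linearly independent elements of $D(\theta)$. Extend them to a basis of $W^*$ and expand $\theta$ in the monomial basis. The condition $\varphi_j\wedge\theta=0$ kills every monomial not containing $\varphi_j$. Applying this for every $j$, each surviving monomial contains $\varphi_1\wedge\cdots\wedge\varphi_s$, so $\theta\in\varphi_1\wedge\cdots\wedge\varphi_s\wedge\Lambda^{k-s}W^*$. Since $\theta\neq0$, this forces $s\le k$.

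I expect the main obstacle to be this divisibility bookkeeping, in particular making sure the criterion in Step 3 uses $k\le r$ correctly. Beyond that, the argument is routine linear algebra and works over any field.
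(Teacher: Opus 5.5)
Your proposal is correct and follows essentially the paper's route: dualize, identify the annihilator of $\Lambda^k\ker\varphi$ with $\varphi\wedge\Lambda^{k-1}W^*$, pass to a single nonzero form, and bound the set of divisors by a monomial expansion in a basis beginning with independent divisors. The only difference is how you show the divisor set is a subspace. You realize it (after adjoining $0$, which your literal definition omits) as the kernel of the linear map $\varphi\mapsto\varphi\wedge\theta$, whereas the paper argues that every nonzero element of the span of a maximal independent family of divisors divides $\theta$, and then uses maximality.
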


\begin{proof}
The first case is immediate. Suppose $\Theta\ne0$. Choose $\lambda\in(\F_q^m)^*$ such that $\omega:=\lambda\circ\Theta\in\Lambda^kW^*$ is nonzero. Every hyperplane null for $\Theta$ is also null for $\omega$.

Write an $r$-dimensional hyperplane as $H=\ker\phi$ for some $0\ne\phi\in W^*$, defined up to a nonzero scalar. Choose a decomposition $W^*=\langle\phi\rangle\oplus U$. Then every $\omega\in\Lambda^kW^*$ has a unique decomposition $\omega=\phi\wedge\eta+\xi$ where $\eta\in\Lambda^{k-1}U$ and $\xi\in\Lambda^kU$. The restriction of $\omega$ to $H$ is precisely $\xi$. Hence
\[
\omega|_{\Lambda^kH}=0
\quad\Longleftrightarrow\quad
\omega\in\phi\wedge\Lambda^{k-1}W^*.
\]

Let $D(\omega):=\{0\}\cup\{\phi\in W^*\setminus\{0\}:\omega\in\phi\wedge\Lambda^{k-1}W^*\}$. We claim that $D(\omega)$ is a vector subspace of $W^*$ of dimension at most $k$. Indeed, let $\phi_1,\ldots,\phi_d$ be a maximal linearly independent family in $D(\omega)$. Expanding $\omega$ in a basis beginning with $\phi_1,\ldots,\phi_d$, divisibility by each $\phi_i$ forces every nonzero basis monomial of $\omega$ to contain every $\phi_i$. Thus
\[
\omega\in\phi_1\wedge\cdots\wedge\phi_d\wedge\Lambda^{k-d}W^*.
\]
Since $\omega\ne0$, necessarily $d\le k$. Moreover every nonzero element of $\Span(\phi_1,\ldots,\phi_d)$ divides the displayed wedge, so it belongs to $D(\omega)$; maximality gives $D(\omega)=\Span(\phi_1,\ldots,\phi_d)$. Hyperplanes null for $\omega$ correspond to one-dimensional subspaces of $D(\omega)$. Hence the number of hyperplanes null for $\Theta$ is at most $\qbinom{k}{1}=(q^k-1)/(q-1)$.
\end{proof}

We now quantify the contribution of null $(r+1)$-spaces to the factorial moments of $X_r$.

\begin{proposition}[Null $(r+1)$-space moment obstruction]\label{prop:rplusone-space-moment-obstruction}
Uniformly over every prime power $q$, consider any resonant parameters $2\le k\le r$, $m\ge1$. Let $X_{N,r+1}$ count null $(r+1)$-spaces and put $L_r=\qbinom{r+1}{r}=(q^{r+1}-1)/(q-1)$. Then
\begin{equation}
 \E X_{N,r+1}
 \le C_q q^{-A_r},
 \qquad
 A_r=(r+1)\left(1+\frac{(k-1)c}{r+1-k}\right).
 \label{eq:rplusone-space-probability}
\end{equation}
Moreover, for every integer $2\le t\le L_r/2$,
\begin{equation}
 \log_q\E(X_r)_t
 \ge tr-A_r-O(t).
 \label{eq:rplusone-space-moment-lower}
\end{equation}
Consequently:
\begin{enumerate}[label=\textup{(\roman*)}]
 \item if $k=2$ and $m$ is fixed, every fixed integer $t>(m+2)/2$ has $\E(X_r)_t\to\infty$;
 \item fix $k\ge3$ and allow $m=m(r)\ge1$ to vary. For every $\varepsilon>0$, if
 \[
  t\ge\left(\frac{k-1}{k!}+\varepsilon\right)m r^{k-2},
  \qquad t\le L_r/2,
 \]
then, uniformly in $q$, $\E(X_r)_t/\mu_r^t\ge q^{(\varepsilon/2)mr^{k-1}}\longrightarrow\infty$.
\end{enumerate}
Thus rare null $(r+1)$-spaces are a universal obstruction to Poisson factorial-moment approximation at growing orders, even in regimes where the entire null configuration is approximated in total variation by the independent Bernoulli model.
\end{proposition}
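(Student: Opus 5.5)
The plan is to prove the first-moment bound \cref{eq:rplusone-space-probability} by direct computation, then obtain the factorial-moment lower bound \cref{eq:rplusone-space-moment-lower} from the injection of hyperplane tuples described just before \cref{lem:null-hyperplane-saturation}, and finally read off (i) and (ii) by comparing exponents.

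\emph{Step 1 (first moment).} A fixed $(r+1)$-space is null with probability $q^{-m\binom{r+1}{k}}$. By \cref{eq:qbinom-bounds}, $\E X_{N,r+1}\le C_q q^{(r+1)(N-r-1)-m\binom{r+1}{k}}$. With $N=r+c$ the ordinary exponent is $(r+1)(c-1)$. At resonance $m\binom rk=rc$, so
\[
m\binom{r+1}{k}=m\binom rk\frac{r+1}{r+1-k}=\frac{rc(r+1)}{r+1-k}.
\]
Subtracting gives $-(r+1)\bigl(1+\frac{(k-1)c}{r+1-k}\bigr)=-A_r$, exactly the computation already carried out in the proof of \cref{thm:poisson-general-intro}.

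\emph{Step 2 (moment lower bound).} On the event that $W$ is a null $(r+1)$-space, all $L_r$ hyperplanes of $W$ are null. Hence every ordered distinct $t$-tuple of them is counted in $(X_r)_t$. For $t\ge2$, two distinct hyperplanes of $W$ span $W$, so tuples coming from different $W$ are distinct. This gives $(X_r)_t\ge (L_r)_t X_{N,r+1}$ pointwise, and therefore $\E(X_r)_t\ge (L_r)_t\E X_{N,r+1}$.

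For the lower bound on the second factor, use $\qbinom{N}{r+1}\ge q^{(r+1)(c-1)}$, which gives $\E X_{N,r+1}\ge q^{-A_r}$ exactly. For the first factor, $t\le L_r/2$ gives $(L_r)_t\ge (L_r/2)^t$. Since $L_r\ge q^r$, this is at least $q^{tr}2^{-t}\ge q^{tr-t}$. Taking $\log_q$ yields \cref{eq:rplusone-space-moment-lower}.

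\emph{Step 3 (consequences).} Compare with $\mu_r^t$, where $\mu_r\le C_q\le C_2$, so $\log_q\mu_r^t=O(t)$ uniformly in $q$.

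For (ii), the resonance relation gives $c=\frac{m}{k!}r^{k-1}(1+O(1/r))$, so
\[
A_r=r+1+(k-1)c\bigl(1+O_k(1/r)\bigr)=\frac{(k-1)m}{k!}r^{k-1}+O_k(mr^{k-2}+r).
\]
Meanwhile $tr\ge(\frac{k-1}{k!}+\varepsilon)mr^{k-1}$. Hence
\[
tr-A_r-O(t)\ge\varepsilon mr^{k-1}-O_k(mr^{k-2}+r)-O(t).
\]
The $O(t)$ terms are $O(t)=o(tr)$, but they must be compared to $\varepsilon mr^{k-1}$ even when $t$ is large, up to $L_r/2\asymp q^r$. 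This is the one point needing care. The fix is to write the slack as $tr-A_r\ge\varepsilon' tr$ for some $\varepsilon'$ depending on $\varepsilon,k$. Then the error $O(t)$, which involves only absolute constants because $C_q\le C_2$ and $\log_q 2\le1$, is absorbed as $t(r\varepsilon'-O(1))\ge \frac{\varepsilon}{2}mr^{k-1}$ for large $r$. This gives $\E(X_r)_t/\mu_r^t\ge q^{(\varepsilon/2)mr^{k-1}}$.

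For (i), with $k=2$ and $m$ fixed, resonance gives $c=m(r-1)/2$. Then $A_r=(r+1)\bigl(1+\frac{m(r-1)}{2(r-1)}\bigr)=(r+1)(1+m/2)$. So $tr-A_r-O(t)=r\bigl(t-1-m/2\bigr)-O(1)\to\infty$ exactly when $t>(m+2)/2$. Here $t$ is fixed and $L_r/2\to\infty$, so the constraint $t\le L_r/2$ holds for large $r$, and $\E(X_r)_t\to\infty$.

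The main obstacle is only the uniform bookkeeping in (ii): ensuring the $O(t)$ and $\log_q\mu_r^t$ losses stay below the $\varepsilon mr^{k-1}$ gain for all $t$ up to $L_r/2$. The proportional-slack argument in Step 3 handles this.
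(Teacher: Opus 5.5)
Your proposal is correct and follows the paper's proof essentially step for step. It uses the Gaussian-binomial first-moment bound together with the resonance identity $\binom{r+1}{k}=\binom rk\frac{r+1}{r+1-k}$, then the injection of ordered distinct hyperplane tuples of null $(r+1)$-spaces giving $\E(X_r)_t\ge(L_r)_t\,\E X_{N,r+1}$, and finally the exponent comparison for (i) and (ii). Your proportional-slack absorption of the $O(t)$ and $\log_q\mu_r^t$ losses uniformly for $t$ up to $L_r/2$ makes explicit a uniformity point that the paper passes over in a single line.
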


\begin{proof}
By the Gaussian-binomial upper bound, $\E X_{N,r+1}\le C_q q^{(r+1)(N-r-1)-m\binom{r+1}{k}}$. At resonance, using $\binom{r+1}{k}=\binom rk\frac{r+1}{r+1-k}$, the exponent equals $(r+1)(c-1)-rc\frac{r+1}{r+1-k}=-A_r$, which proves \cref{eq:rplusone-space-probability}.

For the moment lower bound, every null $(r+1)$-space $W$ contains exactly $L_r$ $r$-dimensional hyperplanes, and every ordered $t$-tuple of distinct such hyperplanes is a null tuple. Any two distinct hyperplanes determine $W$, so these tuples are counted without overlap between different $(r+1)$-spaces. Hence
\begin{equation}
 \E(X_r)_t
 \ge \qbinom N{r+1}(L_r)_t q^{-m\binom{r+1}{k}}.
 \label{eq:rplusone-space-tuple-lower}
\end{equation}
For $t\le L_r/2$, $(L_r)_t\ge(L_r/2)^t$, while $\log_q L_r=r+O(1)$ uniformly in $q$ and the lower Gaussian-binomial bound gives $\qbinom N{r+1}\ge q^{(r+1)(N-r-1)}$. Taking logarithms in \cref{eq:rplusone-space-tuple-lower} yields \cref{eq:rplusone-space-moment-lower}.

If $k=2$ and $c=m(r-1)/2$, then $A_r/r\to(m+2)/2$, proving part \textup{(i)}. For part \textup{(ii)}, with $k\ge3$ fixed, \cref{eq:resonant-c-scale} and the definition of $A_r$ give $A_r=\frac{m(k-1)}{k!}r^{k-1}+O_k(mr^{k-2})$.

Since $1\le\mu_r\le C_q\le C_2$, we have $\log_q\mu_r^t=O(t)$, so
\[
\log_q\frac{\E(X_r)_t}{\mu_r^t}
\ge tr-A_r-O(t)
\ge(\varepsilon/2)mr^{k-1},
\]
which proves part~\textup{(ii)}.
\end{proof}

\begin{remark}[Two factorial scales]\label{rem:two-factorial-scales}
The uniform random-set proof controls factorial moments through order $\rho_{m,k}(r)$. Whenever $m r^{k-2}=o(q^r)$, in particular when $m$ is fixed, the condition $t\le L_r/2$ in \cref{prop:rplusone-space-moment-obstruction} is automatic at orders $t\asymp m r^{k-2}$, and the null $(r+1)$-space construction forces moment divergence at order
\[
m r^{k-2}=\rho_{m,k}(r)^2.
\]
Thus in these regimes the process law is already asymptotically Poisson on the first scale even though raw factorial moments must eventually fail on its square. For unrestricted growth of $m$, the exact statement is \cref{prop:rplusone-space-moment-obstruction}, including its stated range $t\le L_r/2$.
\end{remark}

\begin{proposition}[First dependence obstruction to total variation]
\label{prop:tv-lower-obstruction}
Fix $k\ge3$, and put $\chi_k:=(k-1)/k!$. Along every resonant sequence $r\to\infty$, allowing the prime power $q=q_r$ and the target dimension $m=m_r\ge1$ to vary,
\begin{equation}
 d_{\mathrm{TV}}\!\left(
   \mathcal L(X_{N,r}),\Pois(\mu_{N,r})
 \right)
 \ge
 q^{-(\chi_k+o_k(1))m r^{k-1}}.
 \label{eq:tv-lower-obstruction-count}
\end{equation}
The same lower bound holds for $d_{\mathrm{TV}}\!\left(\mathcal L(\mathcal Z_{N,r}),\mathcal L(\mathcal B_r)\right)$ and $d_{\mathrm{TV}}\!\left(\mathcal L(\Xi_r),\mathcal L(\Pi_r)\right)$. Here the $o_k(1)$ tends to zero with $r$, uniformly in $q$ and $m$. Equivalently,
\[
\limsup_{r\to\infty}
\frac{-\log_q d_{\mathrm{TV}}\!\left(\mathcal L(X_{N,r}),\Pois(\mu_{N,r})\right)}
{m r^{k-1}}
\le\frac{k-1}{k!},
\]
and likewise for the random-set and point-process distances.
\end{proposition}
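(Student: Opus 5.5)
The plan is to exhibit a single explicit event whose probabilities under the null model and under the Poisson (or Bernoulli) model differ by at least $q^{-(\chi_k+o(1))mr^{k-1}}$. The natural candidate is the first dependence configuration: two null $r$-spaces at Grassmann distance one, i.e.\ meeting in an $(r-1)$-space and spanning an $(r+1)$-space $W$. For such a pair $\dim(\Lambda^kH_1+\Lambda^kH_2)=2\binom rk-\binom{r-1}{k}=\binom rk+\binom{r-1}{k-1}$. Hence the joint null probability is $q^{-rc-m\binom{r-1}{k-1}}$. The number of ordered such pairs is about $q^{rc}\cdot q^{r+c}$ (choose $H_1$, then a hyperplane of $H_1$ and a point outside). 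The expected number of distance-one pairs is therefore $q^{r+c-m\binom{r-1}{k-1}}=q^{r-(k-1)c}$, using $m\binom{r-1}{k-1}=kc$. By \cref{eq:resonant-c-scale} this is $q^{-(\chi_k+o(1))mr^{k-1}}$, and it is exactly the scale claimed.

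First I will treat the random-set and point-process distances. Let $F$ be the event that the configuration contains two members with $d_{\mathrm{Gr}}=1$. Under $\mathcal B_r$ the pair events are independent at probability $p_r^2$, so $\Pp(\mathcal B_r\in F)\le \#\text{pairs}\cdot p_r^2=q^{r+c-rc+O(1)}$, which is far smaller. Under $\mathcal Z_{N,r}$ I will lower-bound $\Pp(F)$ by a second-moment (Paley--Zygmund) estimate on the count $Y$ of distance-one pairs. $\E Y$ is computed above. For $\E Y^2$ I will expand over pairs of pairs: disjointly placed pairs give $(\E Y)^2(1+o(1))$, and overlapping ones give terms that are at most $\E Y$ times $1+o(1)$. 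This uses \cref{thm:exterior-intro} and \cref{prop:growing-low-excess-cluster-count}-type counting, since four spaces with small span have large exterior rank. Because $\E Y\to0$, only $\E Y^2\le(1+o(1))\E Y$ is needed, which yields $\Pp(Y>0)\ge(1-o(1))\E Y$. The same event bounds the Poisson-process distance, since $\Pi_r$ and $\mathcal B_r$ are within $\mu_rp_r$ by \cref{lem:bernoulli-poisson-process}.

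For the scalar count I cannot use $F$ directly, so I will instead compare second factorial moments, or better the probability $\Pp(X_r\ge2)$. I plan to use a cleaner route: condition on small counts. Take the event $\{X_r=2\}$ and evaluate it exactly via Bonferroni up to the scale $q^{-\chi_k mr^{k-1}}$. The direct contributions match Poisson to much higher precision, since \cref{eq:direct-tuple-deficit} gives errors $q^{-c+O(tr)}$, which must also be checked to be smaller than $q^{-\chi_k m r^{k-1}}$. Since $c\approx mr^{k-1}/k!$ is smaller than $\chi_k mr^{k-1}$ when $k\ge3$, this comparison is delicate.

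I therefore expect the main obstacle to be the scalar case. The generic direct-tuple deficit could a priori swamp the distance-one signal, so I must use exact $\mathrm{GL}$-invariant cancellations. Specifically, the sum over all $d_2$ in \cref{eq:exact-pair-factorial-moment} is exact. Then $\E(X)_2-\mu_r^2=\mu_r^2\sum_{d_2\ge1}\pi_{d_2}(q^{m\binom{d_2}k}-1)$, which is positive and dominated by $d_2=r-1$, giving exactly $q^{r-(k-1)c+O(1)}$. From there I would use a test function, for instance $f(x)=\binom x2$ truncated at $T$, in the characterization $d_{\mathrm{TV}}\ge|\E f(X)-\E f(Y)|/(2\|f\|_\infty)$. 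I would control the truncation via the higher-moment bounds of \cref{prop:growing-fixed-target-moments}, which are $q^{-b mr^{k-1}}$. If $b$ is not known to exceed $\chi_k$, I will fall back to the process-level event to at least obtain the random-set statements, and then handle the count by tail comparison of $\binom{X}{2}$ with $T=O(1)$.
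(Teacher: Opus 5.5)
There is a genuine gap, and it sits in the scalar case. That case is the real content of the statement. The random-set and point-process bounds follow from it at once, because total variation cannot increase under the cardinality and total-mass maps, and $d_{\mathrm{TV}}(\mathcal L(|\mathcal B_r|),\Pois(\mu_r))\le\mu_rp_r\le C_2q^{-rc}$ by \cref{lem:bernoulli-poisson-process}. The converse does not hold: a lower bound for the set distance says nothing about the count distance, so your event-$F$ route cannot replace the count argument.

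You correctly extract $\zeta_r:=\E(X_r)_2-\mu_r^2\ge q^{r-(k-1)c-O(1)}$ from the $d_2=r-1$ term of the exact pair identity. (You dropped a term $-\mu_r^2\pi_r$, but it is $O(q^{-rc})$.) A truncated $(n)_2$ is also the right test function.

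What is missing is control of the truncation. Your worry about whether the moment-error exponent exceeds $\chi_k$ shows the mechanism was not found. For $F(n)=(n)_2\mathbf 1_{\{n\le U\}}$ and $P_r\sim\Pois(\mu_r)$ one has exactly
\[
\E F(X_r)-\E F(P_r)=\zeta_r-\E[(X_r)_2;X_r>U]+\E[(P_r)_2;P_r>U].
\]
So no factorial moment needs to be known to precision $\zeta_r$; it suffices that the $X_r$-tail is $o(\zeta_r)$.

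For this, the crude bound $\E(X_r)_s\le C_2^s+1$ at a growing order $s\asymp\rho_{m,k}(r)$, supplied by \cref{prop:growing-fixed-target-moments}, is enough. Take $U=\lceil q^{2\chi_kmr^{k-1}/(s-2)}\rceil$, and note that $(n)_s\ge(n)_2(U/2)^{s-2}$ for $n>U\ge2s$. Then the tail is at most $(C_2^s+1)(2/U)^{s-2}\le q^{-(2\chi_k-o(1))mr^{k-1}}$. The cost of normalizing, $\|F\|_\infty\le U^2=q^{o(mr^{k-1})}$, is subexponential precisely because $s\to\infty$.

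Your fallback with $T=O(1)$ fails: both truncation tails are then of constant order, and nothing controls their difference at scale $q^{-\chi_kmr^{k-1}}$. Likewise, evaluating $\Pp(X_r=2)$ by Bonferroni would require all factorial moments to precision finer than $\zeta_r$, which is not available.

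Separately, your second-moment claim in the process-level route is false. Suppose a distance-one pair $H_1=L\oplus\langle u\rangle$, $H_2=L\oplus\langle v\rangle$ is null. Then $\Lambda^k(L\oplus\langle u+av\rangle)\le\Lambda^kH_1+\Lambda^kH_2$ for every $a$, so the whole pencil of $q+1$ $r$-spaces between $L$ and $H_1+H_2$ is null. Hence $Y\ge\binom{q+1}2$ on $\{Y>0\}$, which gives $\E Y^2\ge\binom{q+1}2\E Y$ and $\Pp(Y>0)\le\E Y/\binom{q+1}2$. This contradicts your $\Pp(Y>0)\ge(1-o(1))\E Y$.

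A weaker estimate $\E Y^2\le q^{o(mr^{k-1})}(\E Y+(\E Y)^2)$ would suffice at the level of exponents. Proving it, however, requires the three- and four-space configuration analysis you only gesture at. The route is also unnecessary once the scalar bound is in hand.
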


\begin{proof}
Recall that $p_r=q^{-m\binom rk}=q^{-rc}$ and $\mu_r=|\mathcal G_r|p_r$. For a uniformly chosen $r$-space $H_2$, with $H_1$ fixed, let $\pi_{d_2}=\Pp(\dim(H_1\cap H_2)=d_2)$. By \cref{eq:exact-pair-factorial-moment} and $\sum_{d_2=0}^r\pi_{d_2}=1$,
\begin{equation}
 \frac{\E(X_r)_2-\mu_r^2}{\mu_r^2}
 =
 \sum_{d_2=1}^{r-1}\pi_{d_2}
 \left(q^{m\binom {d_2}k}-1\right)-\pi_r.
 \label{eq:second-moment-positive-decomposition}
\end{equation}
Taking the $d_2=r-1$ term in \cref{eq:second-moment-positive-decomposition}, we estimate its contribution explicitly. By \cref{eq:grassmann-intersection-count}, $\pi_{r-1}=\qbinom r1\qbinom c1 q/\qbinom Nr$. Using \cref{eq:qbinom-bounds} and $C_q\le C_2$, $\pi_{r-1}\ge C_2^{-1}q^{r+c-1-rc}$. At resonance, $m\binom{r-1}{k-1}=kc$, and hence $m\binom{r-1}{k}=m\binom rk-m\binom{r-1}{k-1}=(r-k)c$. Therefore, for all sufficiently large $r$,
\[
\pi_{r-1}\left(q^{m\binom{r-1}{k}}-1\right)
\ge \frac1{2C_2}q^{-(k-1)c+r-1}.
\]
On the other hand, $\pi_r=\qbinom Nr^{-1}\le q^{-rc}$, which is exponentially smaller. Since $\mu_r\ge1$, if $\zeta_r:=\E(X_r)_2-\mu_r^2$, then
\begin{equation}
\zeta_r
\ge q^{-(k-1)c+r-O(1)}
=q^{-(\chi_k+o_k(1))m r^{k-1}}.
\label{eq:second-moment-distance-one-gap}
\end{equation}

It remains to convert this second-moment discrepancy, which involves the unbounded statistic $(X_r)_2$, into a total-variation lower bound. Put $s=\left\lfloor \frac{a_k^{\mathrm{mom}}}{2}\rho_{m,k}(r)\right\rfloor$. Then $s\to\infty$, and \cref{prop:growing-fixed-target-moments} gives
\[
\E(X_r)_s\le\mu_r^s+q^{-b_k^{\mathrm{mom}}m r^{k-1}}\le C_2^s+1.
\]
Set $U_r=\left\lceil q^{\,2\chi_km r^{k-1}/(s-2)}\right\rceil$. For large $r$, $U_r\ge2s$. If $n>U_r$, then
\[
(n)_s=(n)_2(n-2)\cdots(n-s+1)\ge(n)_2\left(\frac{U_r}{2}\right)^{s-2}.
\]
Consequently, $\E\!\left[(X_r)_2;X_r>U_r\right]\le(C_2^s+1)\left(\frac2{U_r}\right)^{s-2}=q^{-(2\chi_k-o_k(1))m r^{k-1}}=o(\zeta_r)$.

Let $P_r\sim\Pois(\mu_r)$ and define $F_r(n):=(n)_2\mathbf 1_{\{n\le U_r\}}$. Then $0\le F_r\le U_r^2$, while
\begin{align*}
\E F_r(X_r)-\E F_r(P_r)
&=\zeta_r-\E[(X_r)_2;X_r>U_r]+\E[(P_r)_2;P_r>U_r]\\
&\ge\zeta_r-o(\zeta_r).
\end{align*}
Hence, by the dual characterization of total variation,
\[
d_{\mathrm{TV}}\!\left(\mathcal L(X_r),\Pois(\mu_r)\right)
\ge\frac{\zeta_r-o(\zeta_r)}{U_r^2}.
\]
Since $\log_q U_r^2/(m r^{k-1})=O_k(1/s)=o_k(1)$, \cref{eq:second-moment-distance-one-gap} proves \cref{eq:tv-lower-obstruction-count}.

For the random-set statement, applying the cardinality map gives
\[
d_{\mathrm{TV}}\!\left(\mathcal L(\mathcal Z_{N,r}),\mathcal L(\mathcal B_r)\right)
\ge d_{\mathrm{TV}}\!\left(\mathcal L(X_r),\mathcal L(|\mathcal B_r|)\right).
\]
Moreover, by \cref{lem:bernoulli-poisson-process},
\[
d_{\mathrm{TV}}\!\left(\mathcal L(|\mathcal B_r|),\Pois(\mu_r)\right)
\le\mu_rp_r\le C_2q^{-rc}.
\]
Since $rc=\Theta_k(mr^k)$, this is negligible compared with \cref{eq:tv-lower-obstruction-count}. The triangle inequality therefore gives the same lower bound for the random-set distance.

Finally, total mass maps $\Xi_r$ to $X_r$ and $\Pi_r$ to $\Pois(\mu_r)$, so total variation cannot increase under this map. Hence the same lower bound holds for the point-process distance.
\end{proof}

\begin{corollary}[Occupancy independence and direct-sum position]\label{cor:occupancy-independence}
Under the hypotheses of \cref{thm:fixed-target-intro}, the following hold uniformly in $q$ and $m$.
\begin{enumerate}[label=\textup{(\roman*)}]
 \item With probability at least $1-\exp\!\left(-a_k^{\mathrm{TV}}\Psi_r\right)$, the null $r$-spaces are in ordinary direct-sum position; equivalently, $\dim\sum_{H\in\mathcal Z_{N,r}}H=rX_{N,r}$.
 \item For every finite family of pairwise disjoint deterministic sets
 $\mathcal A_{r,1},\ldots,\mathcal A_{r,s}\subseteq\mathcal G_r$ (with $s$ allowed to depend on $r$),
 \[
 d_{\mathrm{TV}}\!\left(
 \mathcal L\left(\bigl(|\mathcal Z_{N,r}\cap\mathcal A_{r,i}|\bigr)_{i=1}^s\right),
 \bigotimes_{i=1}^s\operatorname{Bin}(|\mathcal A_{r,i}|,p_r)
 \right)
 \le \exp\!\left(-a_k^{\mathrm{TV}}\Psi_r\right).
 \]
 In particular, if $s$ is fixed and $p_r|\mathcal A_{r,i}|\to\lambda_i<\infty$, then these counts converge jointly to independent $\Pois(\lambda_i)$ variables.
 \item For every $t$ for which the conditioning event is nonempty, one has the exact symmetry statement
 \[
  \mathcal L\!\left(\mathcal Z_{N,r}\mid X_{N,r}=t,\ \mathcal Z_{N,r}\in\mathfrak D_r\right)
  =\operatorname{Unif}(\mathfrak D_{r,t}),
 \]
 where $\mathfrak D_{r,t}$ is the set of direct $t$-subsets of $\mathcal G_r$. Consequently, for every fixed $t\ge0$, if $\mathcal U_{r,t}$ is a uniformly random $t$-subset of $\mathcal G_r$, then
 \[
 d_{\mathrm{TV}}\!\left(
 \mathcal L(\mathcal Z_{N,r}\mid X_{N,r}=t),
 \mathcal L(\mathcal U_{r,t})
 \right)\longrightarrow0.
 \]
Moreover, conditional on $X_{N,r}=t$ and directness, the ordinary span $\sum_{H\in\mathcal Z_{N,r}}H$ is exactly uniform in $\Gr(tr,V_N)$. For fixed $t$, conditional only on $X_{N,r}=t$, the only exception to this uniform description comes from non-direct configurations, whose conditional probability is at most $q^{-\Omega_{k,t}(m r^{k-1})}$, uniformly in $q$ and $m$.
\end{enumerate}
\end{corollary}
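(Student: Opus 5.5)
The plan is to read off all three parts from the random-set approximation \cref{eq:bernoulli-subset-tv}, the directness estimate \cref{prop:uniform-direct-fixed}, and the $\mathrm{GL}(V_N)$-symmetry already used in the proof of \cref{thm:fixed-target-intro}.

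For (i), I would invoke \cref{prop:uniform-direct-fixed} directly. It gives $\Pp(\mathcal Z_{N,r}\notin\mathfrak D_r)\le e^{-4a_k^{\mathrm{TV}}\Psi_r}\le e^{-a_k^{\mathrm{TV}}\Psi_r}$. By definition, $\mathcal Z_{N,r}\in\mathfrak D_r$ means exactly $\dim\sum_{H\in\mathcal Z_{N,r}}H=rX_{N,r}$, so nothing more is needed.

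For (ii), the map $S\mapsto(|S\cap\mathcal A_{r,i}|)_{i}$ is a deterministic measurable function of the random subset. Total variation does not increase under such maps, so the distance in (ii) is at most $d_{\mathrm{TV}}(\mathcal L(\mathcal Z_{N,r}),\mathcal L(\mathcal B_r))$, which is bounded by \cref{eq:bernoulli-subset-tv}. Under $\mathcal B_r$ the sets $\mathcal A_{r,i}$ are disjoint and sites are retained independently, so the image law is exactly $\bigotimes_i\operatorname{Bin}(|\mathcal A_{r,i}|,p_r)$. For the Poisson limit with fixed $s$, I would apply the binomial part of \cref{lem:bernoulli-poisson-process} coordinatewise. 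This gives an error of at most $\sum_i|\mathcal A_{r,i}|p_r^2\le\mu_rp_r\to0$. Continuity of $\Pois(\lambda)$ in $\lambda$ then finishes the fixed-$s$ statement.

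For (iii), the exact conditional uniformity is the transitivity-plus-invariance argument from \cref{lem:orbit-reduction}. The law of $\mathcal Z_{N,r}$ is $\mathrm{GL}(V_N)$-invariant, the action preserves $\mathfrak D_{r,t}$, and the action is transitive on $\mathfrak D_{r,t}$. Hence the conditional law on $\mathfrak D_{r,t}$ is invariant under a transitive group, and so it is uniform. Uniformity of the span follows because $S\mapsto\sum_{H\in S}H$ is $\mathrm{GL}$-equivariant from $\mathfrak D_{r,t}$ onto $\Gr(tr,V_N)$, which requires $tr\le N$ (this holds whenever the event is nonempty). The pushforward of an invariant measure is then invariant on a transitive $\mathrm{GL}$-space, hence uniform.

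For the conditioning on $X_{N,r}=t$ alone, with $t$ fixed, I would bound the conditional probability of non-directness by $\Pp(X_{N,r}=t,\ \text{nondirect})/\Pp(X_{N,r}=t)$. The numerator is at most $\E R_t/t!\le q^{-b_k^{\mathrm{mom}}mr^{k-1}}$ by \cref{prop:growing-fixed-target-moments}, since $t\le a_k^{\mathrm{mom}}\rho_{m,k}(r)$ for large $r$. For the denominator, \cref{eq:critical-count-poisson} together with $1\le\mu_r\le C_2$ gives $\Pp(X_{N,r}=t)\ge e^{-C_2}/t!-o(1)$, which is bounded below by a constant depending on $t$. This yields the stated bound $q^{-\Omega_{k,t}(mr^{k-1})}$.

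Next, $\operatorname{Unif}(\mathfrak D_{r,t})$ and a uniform $t$-subset $\mathcal U_{r,t}$ differ in total variation by at most $\Pp(\mathcal U_{r,t}\notin\mathfrak D_r)$. This is $o(1)$ by \cref{eq:direct-tuple-deficit}, as in the proof of \cref{prop:uniform-direct-fixed}. A triangle inequality then gives the convergence.

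The main care point is the denominator lower bound, which is needed so that the conditioning does not inflate the error. The uniform Poisson approximation handles it uniformly in $q$ and $m$.
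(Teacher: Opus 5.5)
Your proposal is correct and follows essentially the same route as the paper. Part (i) is read off from \cref{prop:uniform-direct-fixed}, and (ii) is the pushforward of \cref{eq:bernoulli-subset-tv} under the occupancy map together with the coordinatewise binomial--Poisson bound. For (iii) you combine exact $\mathrm{GL}(V_N)$-orbit uniformity on $\mathfrak D_{r,t}$, the bound $\Pp(X_{N,r}=t,\ \mathcal Z_{N,r}\notin\mathfrak D_r)\le\E R_t/t!$ from \cref{prop:growing-fixed-target-moments} over a uniform lower bound on $\Pp(X_{N,r}=t)$ from \cref{eq:critical-count-poisson}, and a comparison of both laws through $\operatorname{Unif}(\mathfrak D_{r,t})$; this is the paper's argument, with the $R_t=t!$ reduction and $\mu_r\ge1$ made explicit.
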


\begin{proof}
Part \textup{(i)} is \cref{prop:uniform-direct-fixed}. Part \textup{(ii)} follows from \cref{eq:bernoulli-subset-tv} by applying the measurable map that records the indicated occupancy counts; under $\mathcal B_r$ those counts are independent binomials. The Poisson conclusion follows from the elementary binomial--Poisson bound in \cref{lem:bernoulli-poisson-process}.

For \textup{(iii)}, conditional on $X_{N,r}=t$ and on directness, $\mathrm{GL}(V_N)$-invariance makes $\mathcal Z_{N,r}$ exactly uniform over the direct $t$-subsets. The uniform $t$-subset $\mathcal U_{r,t}$ is direct with probability $1-q^{-\Omega_{k,t}(mr^{k-1})}$ by the direct-tuple estimate, uniformly in $q$ and $m$. The non-direct contribution $\E R_t$ has the same form of bound by \cref{prop:growing-fixed-target-moments}. Since $1\le C_q\le C_2$ and \cref{eq:critical-scalar-tv-rate} approximates $X_{N,r}$ uniformly by $\Pois(\mu_r)$ with $\mu_r=C_q+o(1)$, for each fixed $t$ the probability $\Pp(X_{N,r}=t)$ is bounded below by a positive constant depending only on $t$ uniformly in $q$ and $m$. Thus the conditional probability of non-directness has the claimed bound. Comparing both laws through the uniform law on direct $t$-subsets proves the conditional distribution statement. The exact conditional uniformity on $\mathfrak D_{r,t}$ is the same $\mathrm{GL}(V_N)$-orbit argument, and the span statement follows because $\mathrm{GL}(V_N)$ acts transitively on $tr$-spaces.
\end{proof}

\begin{corollary}[High-target process law]\label{cor:high-target-process}
There is an absolute constant $a_{\mathrm{HT}}>0$ such that, uniformly over all prime powers $q$, for resonant parameters $2\le k\le r$, $m\ge1$, with $\sigma_r:=\frac{\sqrt c}{r}\longrightarrow\infty$. Define $\mathcal Z_{N,r},\mathcal B_r,\Xi_r,$ and $\Pi_r$ exactly as above. For all sufficiently large parameters,
\begin{align*}
 &d_{\mathrm{TV}}\!\left(\mathcal L(\mathcal Z_{N,r}),\mathcal L(\mathcal B_r)\right)
 +d_{\mathrm{TV}}\!\left(\mathcal L(\Xi_r),\mathcal L(\Pi_r)\right)\\
 &\qquad
 +d_{\mathrm{TV}}\!\left(\mathcal L(X_{N,r}),\Pois(\mu_{N,r})\right)
 \le \exp\!\left(-a_{\mathrm{HT}}\sigma_r\log(2+\sigma_r)\right).
\end{align*}
For fixed $q$, this upgrades \cref{thm:poisson-general-intro} from the scalar count to the entire null configuration.
\end{corollary}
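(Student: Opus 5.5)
The plan mirrors the proof of \cref{thm:fixed-target-intro}, with the high-target factorial estimate \cref{prop:factorial-error} in place of the growing-stability moment bound \cref{prop:growing-fixed-target-moments}. Set $T=\lfloor\theta\sigma_r\rfloor$ for a small absolute $\theta>0$. For $t\le T+1$ we have $(t-1)^2r^2/4\le\theta^2c/4$ and $(t-1)r\le\theta\sqrt c=o(c)$. Hence \cref{prop:factorial-error} gives
\[
\left|\E(X_r)_t-\mu_r^t\right|\le A_{\mathrm{fac}}^{t}\,t\,(1+r)\,q^{-c/2}\le q^{-c/4}
\]
for all large parameters. This uses that $t\log A_{\mathrm{fac}}=O(\sqrt c)$ and $\log r=o(c)$, both guaranteed by $c\ge\sigma_r^2r^2\to\infty$. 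The condition $tr\le N$ holds because $Tr\le\sqrt c<N$.

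The same computation bounds the non-direct contribution $\E R_t$. The proof of \cref{prop:factorial-error} shows that the non-direct tuples contribute at most $tr\,C_2^{t+1}q^{-c+(t-1)^2r^2/4}$, and this is again at most $q^{-c/4}$ in the range $t\le T+1$. Next, \cref{lem:quantitative-brun} with $\lambda=\mu_r\le C_2$ and $\varepsilon=q^{-c/4}$ gives
\[
d_{\mathrm{TV}}(\mathcal L(X_r),\Pois(\mu_r))\le e^2q^{-c/4}+\frac{(2^{T+2}+2)(C_2^{T+1}+1)}{(T+1)!}.
\]
By Stirling, the right side is $\exp(-T\log T+O(T))$. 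Since $q^{-c/4}\le e^{-c/8}$ and $c=\sigma_r^2r^2\gg\sigma_r\log\sigma_r$, the total is $\exp(-\Omega(\sigma_r\log(2+\sigma_r)))$.

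To pass to the random set, I would copy the proof of \cref{prop:uniform-direct-fixed} verbatim with this $T$, and bound
\[
\Pp(\mathcal Z_{N,r}\notin\mathfrak D_r)\le\frac{\E(X_r)_{T+1}}{(T+1)!}+\sum_{s\le T}\frac{\E R_s}{s!}.
\]
For the Bernoulli side, the conditional non-directness given $|\mathcal B_r|=s$ is at most $8sq^{-c+(s-1)r}$ by \cref{eq:direct-tuple-deficit}. For $s\le T$ this is $q^{-c/2}$, and the ratio $(|\mathcal G_r|)_s/|\mathcal G_r|^s$ is $1-o(1)$ because $|\mathcal G_r|\ge q^{rc}$. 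The tail is $\Pp(|\mathcal B_r|>T)\le\mu_r^{T+1}/(T+1)!$. Then \cref{lem:orbit-reduction} with the $\mathrm{GL}(V_N)$-invariance and transitivity already verified, combined with \cref{lem:bernoulli-poisson-process} (whose error $\mu_rp_r\le C_2q^{-rc}$ is negligible), gives the random-set and point-process bounds. Summing the three distances and shrinking the constant yields the constant $a_{\mathrm{HT}}$.

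I expect the only delicate point to be uniformity in $q$ together with the absolute nature of the constants. The factor $A_{\mathrm{fac}}^t$ must not swamp $q^{-c/2}$ when $q=2$, and this is exactly why I take $T\asymp\sigma_r=\sqrt c/r$ rather than something larger. I must also check that no $k$-dependence enters. This holds because \cref{prop:factorial-error} and \cref{thm:exterior-intro} hold for all $2\le k\le r$ with absolute constants, and the argument never uses $c\asymp mr^{k-1}$. The case $k=2$ is allowed here, and nothing above needs $k\ge3$.

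Finally, the phrase ``for all sufficiently large parameters'' should be read as $\sigma_r$ large. Then $c$ is large and $r\ge2$, which is all the absorptions above require.
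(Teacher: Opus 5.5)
Your proposal is correct and follows essentially the same route as the paper. You take $T\asymp\sqrt c/r$ and use \cref{prop:factorial-error}, together with the non-direct bound from its proof, to get $q^{-\Omega(c)}$ errors for all $t\le T+1$. You then apply \cref{lem:quantitative-brun} with Stirling and rerun the directness argument of \cref{prop:uniform-direct-fixed} together with \cref{lem:orbit-reduction,lem:bernoulli-poisson-process}. Your explicit checks fill in details the paper leaves implicit: that $A_{\mathrm{fac}}^t$ and the polynomial factors are absorbed into $q^{-c/4}$, that $(T+1)r\le\sqrt c+r\le N$, and that nothing requires $k\ge3$.
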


\begin{proof}
Take $T=\lfloor\theta_{\mathrm{HT}}\sqrt c/r\rfloor$ with a fixed sufficiently small $\theta_{\mathrm{HT}}>0$. Then $T\to\infty$, $Tr\le N$ for all sufficiently large parameters, and \cref{prop:factorial-error} gives uniformly for $1\le t\le T+1$
\[
 \left|\E(X_r)_t-\mu_r^t\right|\le q^{-b_{\mathrm{HT}}c}
\]
for an absolute constant $b_{\mathrm{HT}}>0$; the proof of that proposition gives the same bound for the non-direct contribution. Choose the fixed $a_{\mathrm{HT}}>0$ sufficiently small for the resulting Brun and process estimates. Quantitative Brun inversion therefore yields
\[
 d_{\mathrm{TV}}\!\left(\mathcal L(X_r),\Pois(\mu_r)\right)
 \le \exp\!\left(-a_{\mathrm{HT}}\sigma_r\log(2+\sigma_r)\right).
\]
Repeating the proof of \cref{prop:uniform-direct-fixed} gives the same bound for the probability that either $\mathcal Z_{N,r}$ or $\mathcal B_r$ is non-direct. The orbit-reduction and Bernoulli-to-Poisson lemmas then give the two process-level estimates.
\end{proof}

\subsection{Maximum law}

\begin{corollary}[Complete maximum-dimension law]
\label{cor:complete-maximum}
Fix $k\ge3$. Let $\alpha_N$ be the maximum null dimension and $M_N=\max\{1\le r\le N:\Delta_N(r)\ge0\}$. Then the following hold.
\begin{enumerate}[label=\textup{(\roman*)}]
\item Fix $q$ and $m\ge1$. Along every sequence $N\to\infty$ for which $\Delta_N(M_N)>0$, $\Pp(\alpha_N=M_N)\longrightarrow1$.

\item Along every sequence for which $\Delta_N(M_N)=0$, put $r=M_N$. If $r\to\infty$ and the resonant sequence is covered by \cref{thm:fixed-target-intro}, allowing $q=q_r$ and $m=m_r$ to vary, then
\[
\Pp(\alpha_N\notin\{r-1,r\})\longrightarrow0,
\]
and
\[
\Pp(\alpha_N=r)=1-e^{-C_q}+o(1),
\qquad
\Pp(\alpha_N=r-1)=e^{-C_q}+o(1).
\]
In particular, if $q_r\to\infty$, these probabilities converge to $1-e^{-1}$ and $e^{-1}$.
\end{enumerate}
\end{corollary}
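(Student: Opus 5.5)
Part (i) is \cref{prop:locking}(i) verbatim, so I will simply cite it. For part (ii), with $\Delta_N(M_N)=0$ and $r=M_N$, I will first check that this is exactly the resonance condition. By \cref{eq:locking-identity}, $r(N-r)-m\binom rk=\frac rk\Delta_N(r)=0$. Writing $N=r+c$, this gives $m\binom rk=rc$, so the quantities $c,p_r,\mu_r$ of \cref{thm:fixed-target-intro} are defined. The plan is to sandwich $\alpha_N$ between $r-1$ and $r$ with probability $1-o(1)$ uniformly in $q,m$, and then read off $\Pp(\alpha_N=r)=\Pp(X_{N,r}>0)+o(1)$ from the Poisson law \cref{eq:uniform-count-to-Cq}.

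For the upper bound I will use \cref{eq:rplusone-space-probability}: $\Pp(\alpha_N\ge r+1)\le\E X_{N,r+1}\le C_2q^{-A_r}$, with $A_r\ge r+1\to\infty$. This holds because any null space of dimension larger than $r$ contains a null $(r+1)$-space. The bound is uniform in $q$ and $m$.

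For the lower bound, I need a null $(r-1)$-space with high probability, uniformly in $q,m$. I will follow the last part of the proof of \cref{thm:poisson-general-intro}. If $k=r$ this is trivial, so assume $k\le r-1$. Set $r'=r-1$ and $N=r'+c'$ with $c'=c+1$. Then $m\binom{r-1}{k}=c(r-k)\le r'(N-r')$, so \cref{eq:below-threshold} holds. Moreover $N-2r'=c-r+2\to\infty$, since $c\asymp_k mr^{k-1}\ge r^2/k!$ for $k\ge3$. \cref{lem:pair-overlap} (uniform in $q,m$) then gives $\E[(X_{N,r-1})_2]/\mu_{N,r-1}^2=1+O(q^{-(c-r)/2})$. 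The exponent of $\mu_{N,r-1}$ is $c(k-1)+r-1\to\infty$, so $\mu_{N,r-1}\ge q^{c(k-1)+r-1}\to\infty$. Chebyshev then gives $\Pp(X_{N,r-1}=0)\le \operatorname{Var}X/\mu^2=O(q^{-(c-r)/2})+\mu^{-1}\to0$ uniformly. The only step needing care is this uniformity in varying $q_r,m_r$, since \cref{prop:locking} was stated for fixed $q,m$. It is handled by the explicit uniform error in \cref{lem:pair-overlap} together with $q\ge2$. I expect this to be the main (mild) obstacle: checking that every bound used depends only on $c-r\to\infty$, not on fixed $q,m$.

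To conclude, on the event $\{\alpha_N\in\{r-1,r\}\}$ we have $\alpha_N=r$ if and only if $X_{N,r}>0$. By \cref{eq:uniform-count-to-Cq}, $\Pp(X_{N,r}=0)=e^{-C_q}+O(q^{-r}+e^{-a_k^{\mathrm{TV}}\Psi_r})=e^{-C_q}+o(1)$, because $\Psi_r\to\infty$. This yields both displayed probabilities. If $q_r\to\infty$ then $C_{q_r}\to1$, giving the limits $1-e^{-1}$ and $e^{-1}$.
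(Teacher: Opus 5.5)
Your proposal is correct and follows the paper's proof step for step: you cite \cref{prop:locking}(i) for part (i), check resonance from $\Delta_N(r)=0$, bound $\Pp(\alpha_N\ge r+1)$ via \cref{eq:rplusone-space-probability} and Markov's inequality, and obtain a null $(r-1)$-space from the pair-overlap lemma using $N-2(r-1)=c-r+2\to\infty$. You then read off the two-point law from \cref{eq:uniform-count-to-Cq}; your explicit Chebyshev bound $O(q^{-(c-r)/2})+\mu_{N,r-1}^{-1}$ makes precise the uniformity in varying $q_r,m_r$ that the paper asserts without writing out.
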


\begin{proof}
Part \textup{(i)} is \cref{prop:locking}(i).

For \textup{(ii)}, the identity $\Delta_N(r)=0$ is equivalent to $N-r=\frac mr\binom rk$, so the parameters are resonant. By \cref{eq:rplusone-space-probability} and Markov's inequality, $\Pp(\alpha_N\ge r+1)\longrightarrow0$ uniformly. At dimension $r-1$, the first moment exponent is $c(k-1)+r-1\to\infty$, while $N-2(r-1)=c-r+2\to\infty$. Hence the pair-overlap estimate implies that a null $(r-1)$-space exists with probability tending uniformly to one. Thus $\Pp(\alpha_N\notin\{r-1,r\})\longrightarrow0$. On this event, $\alpha_N=r$ if and only if $X_{N,r}>0$, so \cref{eq:uniform-count-to-Cq} gives
\[
\Pp(\alpha_N=r)=1-e^{-C_q}+o(1),
\qquad
\Pp(\alpha_N=r-1)=e^{-C_q}+o(1).
\]
\end{proof}

\section{The bilinear boundary}\label{sec:bilinear-boundary}

\begin{theorem}[Bilinear fixed-target failures]\label{thm:bilinear-boundary-intro}
Fix $q$.
\begin{enumerate}[label=\textup{(\roman*)}]
 \item Let $r\to\infty$ through odd integers, $c=(r-1)/2$, and $N=(3r-1)/2$. For a uniform alternating bilinear form $B:\Lambda^2\F_q^N\to\F_q$, the number $X_{N,r}$ of totally isotropic $r$-spaces satisfies
 \[
 \E X_{N,r}\to C_q,
 \qquad
 \Pp(X_{N,r}>0)\le C_q q^{-(r^2-1)/8}\to0.
 \]
 Hence $X_{N,r}\to0$ in probability.
 \item Let $N=2r-1$ and choose uniformly
 $B=(B_1,B_2):\Lambda^2\F_q^N\to\F_q^2$. Every pair has a common totally isotropic $r$-space, so $X_{N,r}\ge1$ deterministically, while
 \[
 \frac{\E(X_{N,r})_2}{(\E X_{N,r})^2}\ge\frac{r-1}{C_q}.
 \]
\end{enumerate}
\end{theorem}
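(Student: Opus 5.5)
For part (i) I would use the rank structure of a single alternating form rather than moment methods. An alternating form $B$ of rank $2s$ on $\F_q^N$ has maximal totally isotropic dimension $N-s$. So a totally isotropic $r$-space exists if and only if $s\le N-r=c=(r-1)/2$, that is, if and only if $\operatorname{rad}B$ has dimension at least $u:=N-(r-1)=(r+1)/2$.

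Hence $\{X_{N,r}>0\}$ is contained in the event that $\operatorname{rad}B$ contains some $u$-space $U$. For a fixed $U$, the forms with $U\le\operatorname{rad}B$ are exactly the forms pulled back from $V/U$, and there are $q^{(N-u)(N-u-1)/2}$ of them. There are at most $\qbinom Nu\le C_q q^{u(N-u)}$ choices of $U$. A union bound then gives
\[
 \Pp(X_{N,r}>0)\le C_q\,q^{u(N-u)+(N-u)(N-u-1)/2-N(N-1)/2}=C_q\,q^{-u(u-1)/2}=C_q\,q^{-(r^2-1)/8}.
\]
The expectation claim is the resonance case $k=2$, $m=1$, for which $c=\frac1r\binom r2=(r-1)/2$. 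Since $c\to\infty$, \cref{eq:mean-resonance} gives $\mu_r\to C_q$. Convergence to $0$ in probability follows immediately.

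For the existence claim in part (ii) I would argue by induction on $r$, proving the statement over any field: every pair of alternating forms on a space of dimension at least $2r-1$ has a common totally isotropic $r$-space. The case $r=1$ is trivial, since every line is isotropic. For the inductive step, pass to a subspace of dimension exactly $2r-1$. There $B_1$ has odd dimension, so its radical contains a nonzero vector $v$.

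Let $W=\ker B_2(v,\cdot)$, so that $v\in W$ and $\dim W\ge2r-2$. Choose $W'$ with $v\in W'\le W$ and $\dim W'=2r-2$. Both forms vanish on $v\times W'$, so they descend to $W'/\langle v\rangle$, which has dimension $2r-3$. By induction there is a common isotropic $(r-1)$-space there. Its preimage in $W'$ is an $r$-space that is isotropic for both forms. Therefore $X_{N,r}\ge1$ deterministically.

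For the moment bound in part (ii), use the exact pair identity \cref{eq:exact-pair-factorial-moment} with $m=2$, $k=2$, $c=N-r=r-1$. Since $N<2r$, every pair of $r$-spaces meets in dimension $d\ge1$, so the sum runs over $1\le d\le r-1$. By \cref{eq:grassmann-intersection-count} and \cref{eq:qbinom-bounds}, I would lower-bound each term by
\[
 \pi_d\,q^{2\binom d2}\ge \frac{q^{d(r-d)}\,q^{(r-d)(c-r+d)}\,q^{(r-d)^2}}{C_q\,q^{rc}}\,q^{d(d-1)}.
\]
The exponent simplifies to $-d(c-r+d)+d(d-1)=-d(d-1)+d(d-1)=0$ when $c=r-1$. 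Thus each of the $r-1$ terms is at least $1/C_q$, and the ratio $\E(X_{N,r})_2/(\E X_{N,r})^2$ is at least $(r-1)/C_q$.

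I expect the main obstacle to be this exponent bookkeeping: making the lower bound on $\pi_d$ exact up to a single factor $C_q$, using only the lower Gaussian-binomial bound in the numerator and the upper bound in the denominator. The inductive existence argument is the conceptual step, but its details (parity of $\dim$, choice of $W'$) are straightforward once the radical vector $v$ is chosen.
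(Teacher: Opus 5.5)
Your proposal is correct and follows the paper's proof essentially step by step: the rank (Witt-index) reduction to $\dim\operatorname{rad}B\ge(r+1)/2$ followed by the same union bound, the same radical-vector induction for two forms (you simply take the pencil member $B_1$, i.e.\ $(\lambda,\mu)=(1,0)$, where the paper allows any nonzero $(\lambda,\mu)$), and the same termwise bound $\pi_d\,q^{d(d-1)}\ge C_q^{-1}$ in the exact pair identity \cref{eq:exact-pair-factorial-moment}. Your exponent computation $-d(c-r+d)+d(d-1)=0$ at $c=r-1$ agrees with the paper's.
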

We prove \cref{thm:bilinear-boundary-intro} after establishing \cref{lem:two-form-common-isotropic}, without using exact rank distributions for random alternating matrices. Such distributions are well known; see, for example, \cite{FulmanGoldstein2015}. For $m=1$ a union bound on the radical is enough, while the $m=2$ obstruction follows from an elementary deterministic induction.

We use the standard Witt-index formula for an alternating form $B$ on an $N$-dimensional space and let $R_B=\dim\operatorname{rad}B$:
\begin{equation}
 \max\{\dim H:H\text{ totally isotropic for }B\}
 =\frac{N+R_B}{2}.
 \label{eq:alternating-witt-index}
\end{equation}
It follows by passing to the nondegenerate symplectic quotient and lifting a Lagrangian subspace; see, for example, \cite{BGH1987}.

\begin{lemma}[Common isotropic subspace for two alternating forms]
\label{lem:two-form-common-isotropic}
Let $B_1,B_2$ be alternating bilinear forms on a vector space $V$ over an arbitrary field. If $\dim V\ge2r-1$, then there is an $r$-dimensional subspace that is totally isotropic for both forms.
\end{lemma}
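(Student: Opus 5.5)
We argue by induction on $r$, with the paper's "elementary deterministic induction" in mind. We may assume $\dim V=2r-1$ by restricting to a subspace. For $r=1$ any line works, since alternating forms vanish on lines.

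For the inductive step, suppose the statement holds for $r-1$ and $\dim V=2r-1$. The idea is to pick a vector $v\ne0$, pass to $v^{\perp_1}\cap v^{\perp_2}$, and quotient by $v$.
\begin{itemize}
\item The subspace $U:=\{w: B_1(v,w)=B_2(v,w)=0\}$ has codimension at most $2$ and contains $v$, so $\dim U\ge 2r-3$.
\item Both forms descend to $\bar U:=U/\langle v\rangle$, which has dimension at least $2r-4$. That is one short of the $2(r-1)-1=2r-3$ the induction needs.
\end{itemize}
So the real work is to choose $v$ so that the two linear conditions $B_1(v,\cdot)$ and $B_2(v,\cdot)$ are linearly dependent. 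Then $U$ has codimension at most $1$ and $\dim\bar U\ge 2r-3$. Given such a $v$, the induction hypothesis yields an $(r-1)$-space $\bar H\le\bar U$ isotropic for both descended forms. Its preimage $H\le U$ has dimension $r$ and is isotropic for both forms, because $v$ is orthogonal to all of $U$ under $B_1$ and $B_2$.

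Finding $v$ is the main obstacle: we need $v\ne0$ and $(\lambda,\mu)\ne(0,0)$ with $(\lambda B_1+\mu B_2)(v,\cdot)=0$, i.e.\ $v$ lies in the radical of some nonzero member of the pencil. The key is parity. Every alternating form has even rank, while $\dim V=2r-1$ is odd, so every form on $V$, in particular $B_1$, is degenerate. Take $\mu=0$, $\lambda=1$, and choose $0\ne v\in\operatorname{rad}B_1$. Then $B_1(v,\cdot)=0$ identically, so $U=\ker B_2(v,\cdot)$ has codimension at most $1$, as required. This works over an arbitrary field because the even-rank property of alternating forms holds in every characteristic.

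It remains to check two points. First, at every stage the ambient dimension stays odd: after restricting $\bar U$ to a subspace of dimension exactly $2(r-1)-1$, parity is preserved, so the radical argument can be applied again at the next step. Second, the lifted space has the right dimension: $\dim H=\dim\bar H+1=r$. With these verified, the induction closes and the lemma follows.
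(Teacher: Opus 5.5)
Your proof is correct and is essentially the paper's argument. The paper takes $0\ne v$ in the radical of an arbitrary nonzero pencil member $\lambda B_1+\mu B_2$, which is singular by the same even-rank versus odd-dimension parity. This makes $B_1(v,\cdot)$ and $B_2(v,\cdot)$ linearly dependent, and the paper then descends both forms to $\bigl(\ker B_1(v,\cdot)\cap\ker B_2(v,\cdot)\bigr)/\langle v\rangle$, of dimension at least $2r-3$, and inducts exactly as you do. Your choice $(\lambda,\mu)=(1,0)$, taking $v$ in the radical of $B_1$ itself, is a special case of that construction and works equally well.
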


\begin{proof}
It is enough to treat $\dim V=2r-1$, by restricting both forms to a $(2r-1)$-dimensional subspace. We induct on $r$. The case $r=1$ is immediate.

Choose a nonzero pair $(\lambda,\mu)$ and put $B_{\lambda,\mu}=\lambda B_1+\mu B_2$. An alternating form has even rank, so on the odd-dimensional space $V$ the form $B_{\lambda,\mu}$ is singular; choose $0\ne v\in\operatorname{rad}B_{\lambda,\mu}$. Set $f_i(x)=B_i(v,x)$ for $i=1,2$. Since $\lambda f_1+\mu f_2=0$, the space $W=\ker f_1\cap\ker f_2$ has codimension at most one in $V$ and contains $v$. The vector $v$ lies in the radical of both restricted forms, so they descend to $\overline W=W/\langle v\rangle$, whose dimension is at least $2r-3$.
Choose a $(2r-3)$-dimensional subspace of $\overline W$ if necessary. By the induction hypothesis, the two descended forms have a common totally isotropic $(r-1)$-space $\overline H$. Its inverse image in $W$ is an $r$-space containing $v$, and both $B_1,B_2$ vanish on it.
\end{proof}

\begin{proof}[Proof of \cref{thm:bilinear-boundary-intro}]
For part \textup{(i)}, at $k=2$, $m=1$, and $c=(r-1)/2$, the resonance identity $\binom r2=rc$ holds. Since $r,c\to\infty$, \cref{eq:mean-resonance} gives $\E X_{N,r}\to C_q$. If $X_{N,r}>0$, then \cref{eq:alternating-witt-index} implies $r\le(N+R_B)/2$, hence $R_B\ge2r-N=(r+1)/2$ because $N=(3r-1)/2$.

Put $\sigma=(r+1)/2$. For a fixed $\sigma$-space $W\le\F_q^N$, the condition $W\le\operatorname{rad}B$ imposes $\binom N2-\binom{N-\sigma}{2}=\sigma N-\sigma(\sigma+1)/2$ independent linear conditions on $B$. Hence, by a union bound and \cref{eq:qbinom-bounds},
\begin{align*}
 \Pp(X_{N,r}>0)
 &\le \Pp(R_B\ge \sigma)\\
 &\le \qbinom N\sigma q^{-\sigma N+\sigma(\sigma+1)/2}\\
 &\le C_q q^{-\sigma(\sigma-1)/2}\\
 &=C_q q^{-(r^2-1)/8}.
\end{align*}
This proves the probability bound and hence
$X_{N,r}\to0$ in probability. Moreover,
\[
 \E[X_{N,r}\mid X_{N,r}>0]
 =\frac{\E X_{N,r}}{\Pp(X_{N,r}>0)}
 \ge (1-o(1))q^{(r^2-1)/8}.
\]

For part \textup{(ii)}, the resonance parameters are $c=r-1$ and $N=2r-1$. By \cref{lem:two-form-common-isotropic}, every pair $(B_1,B_2)$ has a common totally isotropic $r$-space, so $X_{N,r}\ge1$ deterministically. By \cref{eq:mean-resonance}, $\mu_{N,r}\longrightarrow C_q$.

It remains to prove the second-factorial-moment lower bound. Fix an $r$-space $H_1$. Every distinct $r$-space $H_2$ in a $(2r-1)$-dimensional ambient space has $1\le d_2\le r-1$. Let $\pi_{d_2}$ be the probability that a uniformly random $H_2$ has intersection dimension $d_2$ with $H_1$. By \cref{eq:grassmann-intersection-count},
\[
 \pi_{d_2}=
 \frac{
 \qbinom {r}{d_2}\qbinom{r-1}{r-d_2}q^{(r-d_2)^2}
 }{\qbinom{2r-1}{r}}.
\]
Using the lower Gaussian-binomial bound in the numerator and the upper bound
in the denominator,
\[
 \pi_{d_2}
 \ge
 C_q^{-1}
 q^{d_2(r-d_2)+(d_2-1)(r-d_2)+(r-d_2)^2-r(r-1)}
 =
 C_q^{-1}q^{-d_2(d_2-1)}.
\]
For $k=2,m=2$, the weight in \cref{eq:exact-pair-factorial-moment} is $q^{2\binom {d_2}{2}}=q^{d_2(d_2-1)}$. Hence, using the preceding lower bound on $\pi_{d_2}$,
\[
 \frac{\E(X_{N,r})_2}{\mu_{N,r}^2}
 =
 \sum_{d_2=1}^{r-1}\pi_{d_2}q^{2\binom {d_2}{2}}
 \ge\frac{r-1}{C_q},
\]
which proves the stated second-moment bound. Since $\Pp(X_{N,r}=0)=0$ for every $r$, the law cannot converge to $\Pois(C_q)$, which assigns positive mass to zero.
\end{proof}

\begin{corollary}[Bilinear $m\ge3$ moment obstruction]\label{cor:bilinear-high-moment}
Fix $q$ and $m\ge3$. Let $r\to\infty$ along integers for which $c=m(r-1)/2\in\mathbb Z$, let $X_{N,r}$ count common totally isotropic $r$-spaces of a uniformly random $m$-tuple of alternating bilinear forms. Then $\mu_{N,r}\to C_q$ and $\frac{\E(X_{N,r})_2}{\mu_{N,r}^2}\longrightarrow1$. Nevertheless, for every fixed integer $t>(m+2)/2$,
\[
\log_q\E(X_{N,r})_t\ge\left(t-\frac{m+2}{2}\right)r-O_{m,t}(1),
\]
and hence $\E(X_{N,r})_t\to\infty$. If $X_{N,r+1}$ denotes the number of null $(r+1)$-spaces, then $\Pp(X_{N,r+1}>0)\le C_q q^{-(m+2)(r+1)/2}\to0$. Thus vanishingly rare null $(r+1)$-spaces already force divergence of sufficiently high factorial moments.
\end{corollary}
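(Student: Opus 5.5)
The plan is to specialize results already proved to $k=2$, fixed $m\ge3$, with $c=m(r-1)/2$; at this $c$ the resonance identity $m\binom r2=rc$ holds. No new geometric input should be needed.

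\emph{First and second moments.} Since $c\to\infty$ and $r\to\infty$, the resonant product formula \cref{eq:mean-resonance} gives $\mu_{N,r}\to C_q$ directly. For the second moment I will use the exact pair identity \cref{eq:exact-pair-factorial-moment} together with \cref{lem:pair-overlap}, whose hypotheses I need to check.
\begin{itemize}
\item Condition \cref{eq:below-threshold} holds with equality, by resonance.
\item We have $N-2r=c-r=\tfrac{(m-2)r-m}{2}$, which tends to $\infty$ precisely because $m\ge3$. This is the only place the hypothesis $m\ge3$ enters; it is exactly what fails in the case $m=2$ of \cref{thm:bilinear-boundary-intro}.
\end{itemize}
The lemma then shows that the terms $1\le d_2\le r-1$ are $o(1)$. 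Also $\pi_r=\qbinom Nr^{-1}=o(1)$, and hence $\pi_0=1-o(1)$. This gives $\E(X_{N,r})_2/\mu_{N,r}^2\to1$, exactly as in \cref{eq:second-moment-consequence}.

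\emph{Higher moments and rare $(r+1)$-spaces.} I will invoke \cref{prop:rplusone-space-moment-obstruction} at $k=2$. There $A_r=(r+1)\bigl(1+\tfrac{c}{r-1}\bigr)=(r+1)\bigl(1+\tfrac m2\bigr)=\tfrac{(m+2)(r+1)}{2}$. Then \cref{eq:rplusone-space-probability} together with Markov's inequality gives
\[
\Pp(X_{N,r+1}>0)\le\E X_{N,r+1}\le C_q q^{-(m+2)(r+1)/2}\to0.
\]
For fixed $t\ge2$, the condition $t\le L_r/2$ holds for large $r$, since $L_r\asymp_q q^r$. Then \cref{eq:rplusone-space-moment-lower} gives
\[
\log_q\E(X_{N,r})_t\ge tr-\tfrac{(m+2)(r+1)}{2}-O(t)=\Bigl(t-\tfrac{m+2}{2}\Bigr)r-O_{m,t}(1).
\]
When $t>(m+2)/2$ the coefficient of $r$ is positive, so $\E(X_{N,r})_t\to\infty$. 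This is part (i) of that proposition, now made quantitative.

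The only step needing care is verifying $N-2r\to\infty$ so that \cref{lem:pair-overlap} applies. The rest is bookkeeping of the constant $A_r$ at $k=2$.
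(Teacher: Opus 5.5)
Your proposal is correct and follows the paper's proof essentially step for step: you get $\mu_{N,r}\to C_q$ from \cref{eq:mean-resonance}, the second-moment limit from \cref{lem:pair-overlap} after checking $N-2r=((m-2)r-m)/2\to\infty$, and the remaining claims from \cref{prop:rplusone-space-moment-obstruction} at $k=2$ with $A_r=(m+2)(r+1)/2$. Your explicit check that $t\le L_r/2$ holds for fixed $t$ and large $r$ supplies a small step the paper leaves implicit.
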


\begin{proof}
At bilinear resonance, $m\binom r2=rc$ and $c=m(r-1)/2$, so \cref{eq:mean-resonance} gives $\mu_{N,r}\to C_q$. Moreover, $N-2r=c-r=((m-2)r-m)/2\to\infty$ for $m\ge3$. Hence \cref{lem:pair-overlap}, applied at equality in \cref{eq:below-threshold}, gives the stated second-moment limit.

For $k=2$, the exponent in \cref{prop:rplusone-space-moment-obstruction} is $A_r=(r+1)(1+c/(r-1))=(m+2)(r+1)/2$. Therefore \cref{eq:rplusone-space-probability} gives $\Pp(X_{N,r+1}>0)\le \E X_{N,r+1}\le C_q q^{-(m+2)(r+1)/2}$. Finally, for every fixed $t>(m+2)/2$, \cref{eq:rplusone-space-moment-lower} gives
\[
\log_q\E(X_{N,r})_t\ge tr-A_r-O(t)=\left(t-\frac{m+2}{2}\right)r-O_{m,t}(1),
\]
which tends to $+\infty$.
\end{proof}

\begin{remark}[The fixed bilinear regime $m\ge3$]
For fixed $m\ge3$, \cref{cor:bilinear-high-moment} shows that high factorial moments diverge, but this does not rule out convergence in distribution to $\Pois(C_q)$: the divergence is carried by events whose probability tends to zero, so uniform integrability fails. Thus the factorial-moment method does not determine the limiting law in this regime. We conjecture below that, for every fixed $q$ and $m\ge3$, the critical count nevertheless converges to $\Pois(C_q)$; see \cref{conj:bilinear-poisson}.
\end{remark}

Thus exact first-moment resonance alone does not force Poisson behavior. Although the low-excess stability theorem remains valid for $k=2$, the entropy separation that makes it decisive for $k\ge3$ is lost at the bilinear scale.

\section{Sharp thresholds and open problems}
\label{sec:conjectures}

The results above leave three concrete thresholds unresolved. The first is the remaining fixed-target bilinear regime. The second asks whether the obstruction from rare null $(r+1)$-spaces gives the exact factorial-moment transition. The third asks whether the first distance-one exterior dependence also determines the sharp total-variation exponent. Recall that $\chi_k=(k-1)/k!$.

\begin{conjecture}[Bilinear Poisson phase transition]
\label{conj:bilinear-poisson}
Fix a prime power $q$ and an integer $m\ge3$. Let $r\to\infty$ along integers for which $c=m(r-1)/2\in\mathbb Z$. For a uniformly random $\Theta_N:\Lambda^2\F_q^N\to\F_q^m$, let $X_{N,r}$ count the common totally isotropic $r$-spaces. Then $X_{N,r}\xrightarrow{d}\Pois(C_q)$.
\end{conjecture}

This would complete the fixed-target bilinear phase diagram: $m=1$ and $m=2$ are non-Poisson by \cref{thm:bilinear-boundary-intro}, whereas every fixed $m\ge3$ would be Poisson. The restriction to fixed $q$ is deliberate. The conjecture concerns the scalar critical count; the stronger independent Bernoulli model is not asserted here.

\begin{conjecture}[Exact factorial-moment transition]
\label{conj:factorial-threshold}
Fix a prime power $q$ and integers $m\ge1$ and $k\ge3$. Let $r\to\infty$ along resonant integers. Then, for every $0<\varepsilon<\chi_k$,
\[
\sup_{1\le t\le\lfloor(\chi_k-\varepsilon)m r^{k-2}\rfloor}
\left|\frac{\E(X_r)_t}{\mu_r^t}-1\right|
\longrightarrow0.
\]
\end{conjecture}

The opposite side of this proposed transition is already detected by \cref{prop:rplusone-space-moment-obstruction}: for every $\varepsilon>0$, factorial orders in its stated range satisfying $t\ge(\chi_k+\varepsilon)m r^{k-2}$ receive an exponentially large non-Poisson contribution from null $(r+1)$-spaces. For fixed $q,m,k$, the condition $t\le L_r/2$ from that proposition is automatic at this polynomial scale. Thus the conjecture predicts the sharp transition
\[
t_{\mathrm{crit}}\sim\chi_k m r^{k-2}=\frac{k-1}{k!}m r^{k-2},
\]
instead of the smaller stability scale $\rho_{m,k}(r)$ reached by the present proof.

\begin{conjecture}[Sharp dependence exponent]
\label{conj:sharp-tv-exponent}
Fix a prime power $q$ and integers $m\ge1$ and $k\ge3$. Let $r\to\infty$ along resonant integers. Then
\[
\lim_{r\to\infty}
\frac{-\log_q d_{\mathrm{TV}}\!\left(\mathcal L(X_{N,r}),\Pois(\mu_{N,r})\right)}
{m r^{k-1}}
=\chi_k.
\]
The same limit, with the same normalization, holds for the random-set and point-process distances in \cref{eq:bernoulli-subset-tv,eq:poisson-process-tv}.
\end{conjecture}

The upper bound on the possible exponent is already proved in \cref{prop:tv-lower-obstruction}. Moreover, \cref{cor:exterior-dependence-gaps} shows that $\chi_k=(k-1)/k!$ is exactly the first possible exterior-dependence cost and is attained by a distance-one pair. The remaining direction is to show that the aggregate contribution of all more complicated dependencies does not occur on a larger probability scale.

\appendix
\section{Sharp quantized stability refinements}\label{app:quantized-stability}
We retain the notation and standing assumptions of \cref{sec:low-excess-stability}.

\begin{proposition}[Quantized endpoint-defect budget]\label{prop:quantized-endpoint-budget}
Let $\mathcal H=(H_1,\ldots,H_t)$ be a $\Gamma$-low-excess family. There exists $r_0=r_0(k,\Gamma)$ such that, whenever $r\ge r_0$ and $r>2D_{k,\Gamma}$, the following holds. Set
\[
  \mathcal N=\{j\ge2:d_j\le D_{k,\Gamma}\},\qquad
  \mathcal A=\{j\ge2:r-d_j\le D_{k,\Gamma}\}.
\]
Then
\begin{equation}
 \sum_{j\in\mathcal N}d_j
 +(k-1)\sum_{j\in\mathcal A}(r-d_j)
 \le \lfloor k!\Gamma\rfloor.\label{eq:quantized-endpoint-budget}
\end{equation}
More precisely,
\begin{equation}
 \frac{G_k(\mathcal H)}{r^{k-1}}
 \ge \frac1{k!}\left(
 \sum_{j\in\mathcal N}d_j
 +(k-1)\sum_{j\in\mathcal A}(r-d_j)
 \right)-O_{k,\Gamma}(r^{-1}).\label{eq:weighted-endpoint-asymptotic}
\end{equation}
Thus one unit of low-end overlap costs asymptotically $1/k!$ at the natural scale, while one unit of high-end defect costs $(k-1)/k!$. In particular, there are at most $\lfloor k!\Gamma\rfloor$ indices with $1\le d_j\le D_{k,\Gamma}$, and at most $\left\lfloor\dfrac{\lfloor k!\Gamma\rfloor}{k-1}\right\rfloor$ indices with $1\le r-d_j\le D_{k,\Gamma}$.
\end{proposition}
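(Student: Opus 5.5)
The plan is to refine the per-step estimate \cref{eq:uniform-endpoint-profile} into a sharp asymptotic, using the exact decomposition \cref{eq:fixed-excess-sum}. In every ordering, $G_k(\mathcal H)=\sum_{j\ge2}(\beta d_j-e_j)\ge\sum_{j\ge2}f_{r,k}(d_j)$, by \cref{lem:overlap-profile}. By \cref{lem:low-excess-endpoints}, each index $j\ge2$ lies in exactly one of $\mathcal N$ or $\mathcal A$ (for $r>2D_{k,\Gamma}$), and each defect is at most $D_{k,\Gamma}=O_{k,\Gamma}(1)$. It therefore suffices to expand $f_{r,k}(d)$ precisely in the two endpoint windows.

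\emph{Low end.} For $0\le d\le D_{k,\Gamma}$ we have $f_{r,k}(d)=\beta d-\binom dk$. Here $\binom dk=O_{k,\Gamma}(1)$ and $\beta=\frac1r\binom rk=\frac{r^{k-1}}{k!}+O_k(r^{k-2})$. Hence
\[
f_{r,k}(d)\ge \frac{d}{k!}r^{k-1}-O_{k,\Gamma}(r^{k-2}).
\]

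\emph{High end.} Write $s=r-d\le D_{k,\Gamma}$. Using $\beta d=\binom rk-s\beta$ and $\binom rk-\binom{r-s}k=\sum_{h=0}^{s-1}\binom{r-1-h}{k-1}=\frac{s\,r^{k-1}}{(k-1)!}+O_{k,\Gamma}(r^{k-2})$, we get
\[
f_{r,k}(d)=\Bigl(\tfrac1{(k-1)!}-\tfrac1{k!}\Bigr)s\,r^{k-1}+O_{k,\Gamma}(r^{k-2})=\frac{(k-1)s}{k!}r^{k-1}+O_{k,\Gamma}(r^{k-2}).
\]

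Summing these expansions only over the indices with a nonzero defect gives \cref{eq:weighted-endpoint-asymptotic}. Exact endpoints ($d_j=0$ or $d_j=r$) contribute $0$ to both sides, so they cost no error. The main obstacle is that $t$ is unbounded, so the error terms must not be summed over all $t$ indices. The fix is that, by \cref{eq:nonendpoint-step-count}, at most $\lfloor C_{k,\Gamma}\rfloor$ indices have $0<d_j<r$. The total error is therefore $O_{k,\Gamma}(r^{k-2})$, and dividing by $r^{k-1}$ yields the $O_{k,\Gamma}(r^{-1})$ term.

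For the integer bound \cref{eq:quantized-endpoint-budget}, let $S$ denote the weighted sum on the left of that inequality; it is an integer. Combining $G_k(\mathcal H)\le\Gamma r^{k-1}$ with \cref{eq:weighted-endpoint-asymptotic} gives $S\le k!\Gamma+O_{k,\Gamma}(r^{-1})$. We then need to pass from this to $S\le\lfloor k!\Gamma\rfloor$ for $r\ge r_0$.

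This step is delicate when $k!\Gamma$ is an integer. If $k!\Gamma\notin\mathbb Z$, the error is eventually below $\lceil k!\Gamma\rceil-k!\Gamma$, and integrality of $S$ finishes the proof. If $k!\Gamma\in\mathbb Z$, we must rule out $S=k!\Gamma+1$, which is impossible once the error is below $1$. So in both cases we choose $r_0$ with $O_{k,\Gamma}(r_0^{-1})<\min\bigl(1,\ \lceil k!\Gamma\rceil-k!\Gamma\bigr)$, where the minimum is interpreted as $1$ in the integer case. Since $\Gamma$ is fixed, this is a fixed positive threshold.

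The counting consequences are then immediate. Each index with $1\le d_j\le D_{k,\Gamma}$ contributes at least $1$ to $S$. Each index with $1\le r-d_j\le D_{k,\Gamma}$ contributes at least $k-1$ to $S$. Together with $S\le\lfloor k!\Gamma\rfloor$, this gives the two stated bounds.
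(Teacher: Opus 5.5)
Your proposal is correct and follows the paper's proof essentially step for step. Both arguments use the lower bound $G_k(\mathcal H)\ge\sum_{j\ge2}f_{r,k}(d_j)$, the two endpoint expansions $f_{r,k}(d)=\frac{d}{k!}r^{k-1}+O(r^{k-2})$ and $f_{r,k}(r-s)=\frac{(k-1)s}{k!}r^{k-1}+O(r^{k-2})$, summation only over the $O_{k,\Gamma}(1)$ positive-defect indices, and integrality of the weighted sum. Your explicit choice of $r_0$ via the margin $\lceil k!\Gamma\rceil-k!\Gamma$ (or $1$ when $k!\Gamma\in\mathbb Z$) simply spells out the paper's one-line appeal to integrality.
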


\begin{proof}
Recall $f_{r,k}(d)=\beta d-\binom dk$. By \cref{eq:fixed-excess-sum,lem:overlap-profile}, $G_k(\mathcal H)\ge\sum_{j=2}^t f_{r,k}(d_j)$. By \cref{lem:low-excess-endpoints}, $\sum_{j=2}^t\min\{d_j,r-d_j\}=O_{k,\Gamma}(1)$, so only $O_{k,\Gamma}(1)$ indices have positive endpoint defect. Uniformly for fixed $0\le d\le D_{k,\Gamma}$,
\[
 f_{r,k}(d)=\frac{d}{k!}r^{k-1}+O_{k,\Gamma}(r^{k-2}),
\]
while, uniformly for fixed $0\le h\le D_{k,\Gamma}$,
\[
 f_{r,k}(r-h)=\frac{(k-1)h}{k!}r^{k-1}+O_{k,\Gamma}(r^{k-2}).
\]
At $d=0$ and $h=0$ the formulas are exact, so summing only over the positive-defect indices gives \cref{eq:weighted-endpoint-asymptotic}. Combining with $G_k(\mathcal H)\le \Gamma r^{k-1}$ yields
\[
 \sum_{j\in\mathcal N}d_j
 +(k-1)\sum_{j\in\mathcal A}(r-d_j)
 \le k!\Gamma+O_{k,\Gamma}(r^{-1}).
\]
The left side is integral, which gives \cref{eq:quantized-endpoint-budget}.
\end{proof}

\begin{corollary}[Sharp quantized low-excess phase]\label{cor:sharp-pairwise-phase-intro}
Fix $k\ge2$ and $\Gamma>0$, and put $J_\Gamma=\lfloor k!\Gamma\rfloor$ and $B_\Gamma=\left\lfloor\frac{k!\Gamma}{k-1}\right\rfloor$. Every ordering of a $\Gamma$-low-excess family $\mathcal H$ and canonical partition $[t]=P_1\sqcup\cdots\sqcup P_\kappa$ satisfies:
\begin{enumerate}[label=\textup{(\roman*)}]
 \item pairs in one block have Grassmann distance at most $B_\Gamma$, while pairs in different blocks have intersection dimension at most $J_\Gamma$;
 \item every block has an explicit bounded-thickness top, $r\le \dim\sum_{i\in P_a}H_i\le r+B_\Gamma$;
 \item for every choice of representatives $A_a=H_{i_a}$ with $i_a\in P_a$, $0\le \kappa r-\dim(A_1+\cdots+A_\kappa)\le J_\Gamma$;
 \item $\kappa r-J_\Gamma\le \mathfrak O\le\kappa r+B_\Gamma$.
\end{enumerate}
The constants are sharp: a fixed intersection-$d$ pair belongs to the class whenever $\Gamma\ge d/k!$, and a fixed distance-$h$ pair belongs whenever $\Gamma\ge h(k-1)/k!$; below the corresponding value, that pair type is eventually excluded. In particular, the upper block-thickness bound $B_\Gamma$ and the representative-span-deficiency bound $J_\Gamma$ are attained by two-space families whenever the corresponding integer is positive.

For $k\ge3$, this produces two distinct low-excess transitions. If $\Gamma<1/k!$, then $J_\Gamma=B_\Gamma=0$ and every $\Gamma$-low-excess family is an ordinary direct sum. If $1/k!\le\Gamma<(k-1)/k!$, then $B_\Gamma=0$, so every canonical block is a singleton, while $0\le tr-\mathfrak O\le J_\Gamma$. Thus bounded intersections may already occur in this intermediate range, but no nontrivial Grassmann cluster can occur. The first departure from ordinary directness occurs at $\Gamma=1/k!$, whereas genuine distance-one attachments first become admissible at $\Gamma=(k-1)/k!$. For $k=2$ these two thresholds coincide at $1/2$, so the intermediate phase collapses.
\end{corollary}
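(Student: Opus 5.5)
The corollary is essentially a sharpened rerun of the proof of \cref{cor:low-excess-normal-form-intro}. The only new input is the weighted budget of \cref{prop:quantized-endpoint-budget}, which replaces $C_{k,\Gamma}$ everywhere. The plan is to fix an ordering in which, inside each canonical block, the first member comes first, and then to read off every quantity from the integer inequality
\[
\sum_{j\in\mathcal N}d_j+(k-1)\sum_{j\in\mathcal A}(r-d_j)\le J_\Gamma .
\]

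Since $\Gamma$-low excess passes to subfamilies, we can apply \cref{prop:quantized-endpoint-budget} to any subfamily: a pair, a single block, or a set of representatives. By \cref{cor:low-excess-normal-form-intro}, the first member of each block is a new-block step and every later member is an attachment.

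\emph{Part (i).} For a pair $H_i,H_j$ in one block, apply the budget to the two-element subfamily. The second member is an attachment, so $(k-1)d_{\mathrm{Gr}}(H_i,H_j)\le J_\Gamma$, which gives $d_{\mathrm{Gr}}\le\lfloor J_\Gamma/(k-1)\rfloor=B_\Gamma$. For a cross-block pair, the second member is a new-block step, so $\dim(H_i\cap H_j)\le J_\Gamma$.

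\emph{Part (ii).} For a single block, all steps after the first are attachments. Hence $(k-1)(\dim W_a-r)\le J_\Gamma$, which gives $\dim W_a\le r+B_\Gamma$.

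\emph{Part (iii).} Order the representatives; all of them are new-block steps. The budget then gives $\kappa r-\dim(A_1+\cdots+A_\kappa)=\sum d_j\le J_\Gamma$.

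\emph{Part (iv).} Use the span increment formula $\mathfrak O=\kappa r-\sum_{\mathcal N}d_j+\sum_{\mathcal A}(r-d_j)$. The first sum is at most $J_\Gamma$, and the second is at most $J_\Gamma/(k-1)$, hence at most $B_\Gamma$ by integrality.

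One point needs care: $\lfloor\lfloor k!\Gamma\rfloor/(k-1)\rfloor=\lfloor k!\Gamma/(k-1)\rfloor$, which holds by the standard nested-floor identity. All steps also require $r\ge r_0(k,\Gamma)$, which we assume as in the appendix.

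\emph{Sharpness.} Compute the excess of a two-space family exactly. A pair meeting in dimension $d$ has $G_k=\beta d-\binom dk=f_{r,k}(d)$. By the asymptotics in the proof of \cref{prop:quantized-endpoint-budget}, this equals $\frac{d}{k!}r^{k-1}+O(r^{k-2})$ for fixed $d$, and $\frac{(k-1)h}{k!}r^{k-1}+O(r^{k-2})$ for $d=r-h$ with $h$ fixed. Therefore:
\begin{itemize}
\item if $\Gamma>d/k!$ (respectively $\Gamma>h(k-1)/k!$), the pair is eventually $\Gamma$-low-excess;
\item below that value it is eventually excluded.
\end{itemize}

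The main obstacle is the boundary case of equality, $\Gamma=d/k!$. Here we need the exact sign of the $r^{k-2}$ correction. Since $\binom{d}{k}=0$ for fixed $d<k$, we have $f_{r,k}(d)=d\beta=\frac{d}{r}\binom rk<\frac{d}{k!}r^{k-1}$, so the pair is admissible. For $d\ge k$ this requires a closer look, and I would check it directly. Similarly,
\[
f_{r,k}(r-h)=\binom rk-\frac hr\binom rk-\binom{r-h}{k},
\]
and the upper bound $\le\frac{(k-1)h}{k!}r^{k-1}$ must be verified, for instance at $h=1$, where it equals $\binom{r-1}{k-1}-\beta=(k-1)\beta<\frac{k-1}{k!}r^{k-1}$. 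Attainment of $B_\Gamma$ and $J_\Gamma$ then follows by taking $h=B_\Gamma$ or $d=J_\Gamma$.

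The two-phase statement for $k\ge3$ follows directly:
\begin{itemize}
\item If $\Gamma<1/k!$, then $J_\Gamma=0$, which forces all $d_j=0$.
\item If $1/k!\le\Gamma<(k-1)/k!$, then $B_\Gamma=0$, so by (i) every block is a singleton, and (iv) gives $tr-\mathfrak O\le J_\Gamma$.
\item For $k=2$, the two thresholds $1/k!$ and $(k-1)/k!$ coincide at $1/2$.
\end{itemize}
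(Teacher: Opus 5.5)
Your treatment of (i)--(iv) follows the paper's argument. Heredity lets you apply \cref{prop:quantized-endpoint-budget} to pairs, single blocks and representative subfamilies; the new-block/attachment typing comes from \cref{cor:low-excess-normal-form-intro}; and (iv) is the span-increment formula plus $\lfloor J_\Gamma/(k-1)\rfloor=B_\Gamma$.

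\textbf{The gap.} It is in the boundary case of the sharpness statement. You flag this as the main obstacle but settle it only for $d<k$ and $h=1$. It cannot be skipped. The corollary asserts membership whenever $\Gamma\ge d/k!$ or $\Gamma\ge h(k-1)/k!$. Moreover, attainment of $J_\Gamma$ or $B_\Gamma$ by two-space families lands exactly on this boundary when $k!\Gamma$ or $k!\Gamma/(k-1)$ is an integer. For instance, $k=3$, $\Gamma=2/3$ gives $J_\Gamma=4$ and $B_\Gamma=2$. Both are boundary cases, with $d=4\ge k$ and $h=2$, so neither is covered by what you checked.

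\textbf{The fix.} No sign of an $r^{k-2}$ correction is needed; as in the paper, use bounds valid for every $r$.
\begin{itemize}
\item Intersection $d$: since $\beta=(r-1)\cdots(r-k+1)/k!\le r^{k-1}/k!$ and $\binom dk\ge0$, one has $f_{r,k}(d)\le\beta d\le\frac{d}{k!}r^{k-1}$ for all $d$. The term $\binom dk$ you were worried about only helps.
\item Distance $h$: telescoping and monotonicity give $\binom rk-\binom{r-h}k=\sum_{a=0}^{h-1}\binom{r-1-a}{k-1}\le h\binom{r-1}{k-1}=hk\beta$. Hence $f_{r,k}(r-h)\le(k-1)h\beta\le\frac{(k-1)h}{k!}r^{k-1}$.
\end{itemize}

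\textbf{A smaller slip.} In the phase $\Gamma<1/k!$, the budget with $J_\Gamma=0$ gives only $d_j\in\{0,r\}$, not $d_j=0$. To exclude contained steps with $d_j=r$, use $B_\Gamma=0$ and (i) to make every block a singleton. Then $\kappa=t$, and (iv) yields $\mathfrak O=tr$, as the paper does.
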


\begin{figure}[htbp]
\centering
\begin{tikzpicture}[
 x=1cm,y=1cm,
 every node/.style={font=\scriptsize},
 tick/.style={line width=0.7pt},
 arr/.style={-{Latex[length=1.7mm]},line width=0.75pt}
]

\draw[arr] (0,0) -- (11,0) node[right] {$\Gamma$};

\draw[tick] (3,0.14) -- (3,-0.14);
\draw[tick] (7,0.14) -- (7,-0.14);

\node[above=3pt,align=center] at (3,0)
 {$\displaystyle \frac1{k!}$\\
 first ordinary overlap};

\node[above=3pt,align=center] at (7,0)
 {$\displaystyle \frac{k-1}{k!}$\\
 first same-block pair};

\node[align=center,text width=2.7cm] at (1.4,-1.05)
 {$\displaystyle 0\le\Gamma<\frac1{k!}$\\[3pt]
 ordinary direct sum};

\node[align=center,text width=3.4cm] at (5,-1.05)
 {$\displaystyle
 \frac1{k!}\le\Gamma<\frac{k-1}{k!}$\\[3pt]
 bounded intersections,\\
 all blocks are singletons};

\node[align=center,text width=3.2cm] at (9,-1.05)
 {$\displaystyle
 \Gamma\ge\frac{k-1}{k!}$\\[3pt]
 blocks with at least two\\
 members become possible};

\end{tikzpicture}

\caption{Sharp low-excess phases for fixed $k\ge3$. Ordinary non-directness first becomes possible at $\Gamma=1/k!$. In the intermediate regime, distinct $r$-spaces may have bounded intersection, but all clusters are singletons. At $\Gamma=(k-1)/k!$, multi-member clusters become possible.}
\label{fig:sharp-low-excess-thresholds}
\end{figure}

\begin{proof}[Proof of \cref{cor:sharp-pairwise-phase-intro}]
By heredity of the exterior excess, \cref{prop:quantized-endpoint-budget} applies to every subfamily and after every reordering.

Take two distinct members $H_1,H_2$. Applying the endpoint dichotomy and \cref{prop:quantized-endpoint-budget} to this pair gives either
\[
\dim(H_1\cap H_2)\le J_\Gamma
\qquad\text{or}\qquad
d_{\mathrm{Gr}}(H_1,H_2)\le
\left\lfloor\frac{J_\Gamma}{k-1}\right\rfloor
=B_\Gamma.
\]
By \cref{cor:low-excess-normal-form-intro}, two members of the same canonical block have distance $<r/2$, so they must satisfy the second alternative. Conversely, members of distinct blocks cannot satisfy the second alternative, since $B_\Gamma<r/2$ and they would then be adjacent in the graph defining the canonical partition. This proves \textup{(i)}.

Fix a block $P_a$ and order its members with any one of them first. By \textup{(i)}, every later member meets the first in dimension at least $r-B_\Gamma$, and hence is an attachment step. Therefore the span-increment formula and \cref{prop:quantized-endpoint-budget} give
\[
\dim\sum_{i\in P_a}H_i-r
=\sum_{j\ge2}(r-d_j)
\le
\left\lfloor\frac{J_\Gamma}{k-1}\right\rfloor
=B_\Gamma,
\]
which proves \textup{(ii)}.

For \textup{(iii)}, choose one representative $A_a$ from each block and apply the budget to this representative subfamily. Every representative after the first is a new-block step: if one were an attachment, \cref{lem:bounded-radius-clustering} would place it within bounded Grassmann distance of an earlier new-block representative, hence in the same canonical block. Thus $0\le\kappa r-\dim(A_1+\cdots+A_\kappa)=\sum_{j=2}^{\kappa}d_j\le J_\Gamma$.

For \textup{(iv)}, order the whole family block by block, placing a representative first in each block. By the preceding argument the $\kappa$ representatives are exactly the new-block steps, while by \textup{(i)} every remaining member is an attachment step. Hence $\mathfrak O-\kappa r=-\sum_{\substack{j\ge2\\ H_j\text{ new}}}d_j+\sum_{\substack{j\ge2\\ H_j\text{ attach}}}(r-d_j)$, where \cref{prop:quantized-endpoint-budget} gives $\sum_{\substack{j\ge2\\ H_j\text{ new}}}d_j+(k-1)\sum_{\substack{j\ge2\\ H_j\text{ attach}}}(r-d_j)\le J_\Gamma$. Since both sums are nonnegative, $-J_\Gamma\le\mathfrak O-\kappa r\le B_\Gamma$, proving \textup{(iv)}.

It remains to prove sharpness. For two $r$-spaces, \cref{eq:exterior-intersection} and the definition of $f_{r,k}$ give
\begin{equation}
G_k((H_1,H_2))=f_{r,k}(d_2).
\label{eq:pair-exact-excess}
\end{equation}
If the intersection dimension $d$ is fixed, then $f_{r,k}(d)\le \beta d\le \frac{d}{k!}r^{k-1}$ and $\frac{f_{r,k}(d)}{r^{k-1}}\longrightarrow\frac{d}{k!}$. Thus such a pair is $\Gamma$-low-excess whenever $\Gamma\ge d/k!$, and is eventually excluded when $\Gamma<d/k!$.

Likewise, for fixed Grassmann distance $h\ge1$,
\[
\begin{aligned}
f_{r,k}(r-h)
&=\sum_{a=0}^{h-1}\binom{r-1-a}{k-1}
-\frac hk\binom{r-1}{k-1} \\
&\le \frac{(k-1)h}{k}\binom{r-1}{k-1}
\le \frac{(k-1)h}{k!}r^{k-1},
\end{aligned}
\qquad
\frac{f_{r,k}(r-h)}{r^{k-1}}
\longrightarrow\frac{(k-1)h}{k!}.
\]
Hence the distance-$h$ threshold is exactly $(k-1)h/k!$. Taking $d=J_\Gamma$ when $J_\Gamma>0$ gives a two-space family with representative-span deficiency exactly $J_\Gamma$, while taking $h=B_\Gamma$ when $B_\Gamma>0$ gives a single canonical block whose span has dimension exactly $r+B_\Gamma$. Thus both constants are attained.

Finally, if $k\ge3$ and $\Gamma<1/k!$, then $J_\Gamma=B_\Gamma=0$. By \textup{(i)} every block is a singleton, so $\kappa=t$, and \textup{(iv)} gives $\mathfrak O=tr$. If $1/k!\le\Gamma<(k-1)/k!$, then $B_\Gamma=0$, so again $\kappa=t$, while \textup{(iv)} gives $0\le tr-\mathfrak O\le J_\Gamma$. For $k=2$, the two thresholds coincide at $1/2$.
\end{proof}

\begin{corollary}[Exterior-dependence gaps]
\label{cor:exterior-dependence-gaps}
Fix $k\ge3$ and let $\mathcal H=(H_1,\ldots,H_t)$ be a family of distinct $r$-spaces. Suppose that the exterior subspaces are not in direct-sum position, i.e. $\mathfrak E<t\binom rk$. Then, for every $\varepsilon>0$ and all sufficiently large $r$,
\begin{equation}
G_k(\mathcal H)\ge\left(\frac{k-1}{k!}-\varepsilon\right)r^{k-1}.
\label{eq:first-exterior-dependence-gap}
\end{equation}
The constant $(k-1)/k!$ is sharp.

If, in addition,
\begin{equation}
\dim(H_i\cap H_j)<k\qquad(i\ne j),
\label{eq:pairwise-exterior-disjoint}
\end{equation}
equivalently $\Lambda^kH_i\cap\Lambda^kH_j=0$ for $i\ne j$, then the stronger bound
\begin{equation}
G_k(\mathcal H)\ge\left(\frac1{(k-1)!}-\varepsilon\right)r^{k-1}
\label{eq:second-exterior-dependence-gap}
\end{equation}
holds for all sufficiently large $r$.
\end{corollary}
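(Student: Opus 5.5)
The plan is to argue by contradiction through the sharp quantized phase diagram in \cref{cor:sharp-pairwise-phase-intro}. Suppose $\mathfrak E<t\binom rk$ but $G_k(\mathcal H)<\Gamma r^{k-1}$ with $\Gamma:=\frac{k-1}{k!}-\varepsilon$; we may take $0<\varepsilon<\frac{k-1}{k!}$, since otherwise the bound is trivial because $G_k\ge0$. Then $\mathcal H$ is $\Gamma$-low-excess for this fixed $\Gamma$, so \cref{prop:quantized-endpoint-budget} and \cref{cor:sharp-pairwise-phase-intro} apply for $r\ge r_0(k,\Gamma)$. Since $k!\Gamma<k-1$, we get $J_\Gamma=\lfloor k!\Gamma\rfloor\le k-2$ and $B_\Gamma=\lfloor k!\Gamma/(k-1)\rfloor=0$.

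By part (i) of that corollary every canonical block is then a singleton, so $\kappa=t$. Taking all members as representatives in part (iii), in any ordering we get
\[
tr-\mathfrak O=\sum_{j\ge2}d_j\le J_\Gamma\le k-2 .
\]
In particular every $d_j\le k-2<k$. Now \cref{lem:overlap-profile} gives $e_j\le\binom{d_j}{k}=0$ for every $j$, hence $\mathfrak E=t\binom rk$, contradicting the hypothesis. This proves \cref{eq:first-exterior-dependence-gap}.

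For sharpness, take two $r$-spaces meeting in dimension $r-1\ge k$. Then $\Lambda^kH_1\cap\Lambda^kH_2=\Lambda^k(H_1\cap H_2)\ne0$ by \cref{eq:exterior-intersection}, so the exterior sum is not direct. By \cref{eq:pair-exact-excess} the excess is $f_{r,k}(r-1)$, and $f_{r,k}(r-1)/r^{k-1}\to(k-1)/k!$, as computed in the sharpness part of \cref{cor:sharp-pairwise-phase-intro}.

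For \cref{eq:second-exterior-dependence-gap} I would run the same argument with $\Gamma:=\frac1{(k-1)!}-\varepsilon$, so that $k!\Gamma<k$. This yields $J_\Gamma\le k-1$ and $B_\Gamma\le\lfloor k!\Gamma/(k-1)\rfloor\le1$, since $k/(k-1)\le 2$. The step needing care is excluding nontrivial blocks. Two members of one block would be at Grassmann distance at most $B_\Gamma\le1$, hence meet in dimension $r-1\ge k$. This contradicts \cref{eq:pairwise-exterior-disjoint}, whose equivalence with $\Lambda^kH_i\cap\Lambda^kH_j=0$ is immediate from \cref{eq:exterior-intersection}.

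So again $\kappa=t$, and part (iii) gives $\sum_{j\ge2}d_j\le J_\Gamma\le k-1<k$. Therefore every $e_j\le\binom{d_j}{k}=0$, contradicting $\mathfrak E<t\binom rk$. The main thing to verify is the integer bookkeeping: the floors $J_\Gamma$ and $B_\Gamma$ must land exactly at $k-2$ or $k-1$ and at $0$ or $1$, respectively. The threshold $r_0(k,\Gamma)$ from \cref{prop:quantized-endpoint-budget} must also be absorbed into ``sufficiently large $r$'' for each fixed $\varepsilon$.
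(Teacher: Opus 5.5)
Your proposal is correct and follows the paper's proof essentially step for step. In both, you argue by contradiction via \cref{cor:sharp-pairwise-phase-intro} with $\Gamma=\frac{k-1}{k!}-\varepsilon$ (respectively $\frac1{(k-1)!}-\varepsilon$), forcing singleton blocks and $tr-\mathfrak O<k$, and use the distance-one pair for sharpness. The only cosmetic difference is that you conclude $\mathfrak E=t\binom rk$ directly from $e_j\le\binom{d_j}{k}=0$ in \cref{lem:overlap-profile}, whereas the paper quotes \cref{eq:deficiency-exterior-expansion} from \cref{thm:exterior-intro}, which is itself derived from that lemma.
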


\begin{proof}
Since $G_k(\mathcal H)\ge0$, the claim is trivial for $\varepsilon\ge(k-1)/k!$. Thus assume $0<\varepsilon<(k-1)/k!$ and set $\Gamma=(k-1)/k!-\varepsilon$. Suppose, contrary to the claim, that $G_k(\mathcal H)\le\Gamma r^{k-1}$. By \cref{cor:sharp-pairwise-phase-intro}, $B_\Gamma=\lfloor k!\Gamma/(k-1)\rfloor=0$, so every canonical block is a singleton and hence $\kappa=t$. Also $J_\Gamma=\lfloor k!\Gamma\rfloor\le k-2$, and therefore $0\le tr-\mathfrak O\le k-2<k$. Hence \cref{thm:exterior-intro} gives $\mathfrak E\ge t\binom rk-\binom{tr-\mathfrak O}{k}=t\binom rk$, contrary to exterior dependence. This proves \cref{eq:first-exterior-dependence-gap}.

For sharpness, take two $r$-spaces with Grassmann distance one, so their intersection has dimension $r-1$. By \cref{eq:pair-exact-excess}, $G_k(H_1,H_2)=f_{r,k}(r-1)$ and $\frac{f_{r,k}(r-1)}{r^{k-1}}\longrightarrow\frac{k-1}{k!}$. Moreover, $\Lambda^kH_1\cap\Lambda^kH_2=\Lambda^k(H_1\cap H_2)\ne0$, so the exterior sum is dependent. This proves sharpness.

Now assume \cref{eq:pairwise-exterior-disjoint}. Since $G_k(\mathcal H)\ge0$, the claim is trivial for $\varepsilon\ge1/(k-1)!$. Thus assume $0<\varepsilon<1/(k-1)!$ and set $\Gamma=1/(k-1)!-\varepsilon$. Suppose, contrary to the claim, that $G_k(\mathcal H)\le\Gamma r^{k-1}$. Then $J_\Gamma\le k-1$ and $B_\Gamma=\left\lfloor\frac{k!\Gamma}{k-1}\right\rfloor\le1$. If a canonical block contained two distinct members, \cref{cor:sharp-pairwise-phase-intro}(i) would give Grassmann distance at most one. Distinctness would therefore force distance exactly one, and hence intersection dimension $r-1\ge k$ for large $r$, contradicting \cref{eq:pairwise-exterior-disjoint}. Thus every canonical block is again a singleton and $\kappa=t$. Therefore $0\le tr-\mathfrak O\le J_\Gamma\le k-1<k$. Applying \cref{thm:exterior-intro} exactly as above contradicts $\mathfrak E<t\binom rk$. This proves \cref{eq:second-exterior-dependence-gap}.
\end{proof}


\begin{thebibliography}{99}

\bibitem{BarbourBrown1992}
A.~D.~Barbour and T.~C.~Brown,
\newblock Stein's method and point process approximation,
\newblock \emph{Stochastic Process. Appl.} \textbf{43} (1992), no.~1, 9--31,
\newblock doi:10.1016/0304-4149(92)90073-Y.

\bibitem{FeldmanPropp1992}
D.~Feldman and J.~Propp,
\newblock A linear Ramsey theorem,
\newblock \emph{Adv. Math.} \textbf{95} (1992), no.~1, 1--7,
\newblock doi:10.1016/0001-8708(92)90041-I.

\bibitem{ChenXuYe2026}
Q.~Chen, Z.~Xu, and K.~Ye,
\newblock Tur\'an problems for multilinear maps,
\newblock arXiv:2603.00715, 2026.

\bibitem{ChenYe2025}
Q.~Chen and K.~Ye,
\newblock Isotropy and completeness indices of multilinear maps,
\newblock arXiv:2510.27387, 2025.

\bibitem{ChenYeGeometry2026}
Q.~Chen and K.~Ye,
\newblock Geometry of multilinear varieties over infinite fields and its applications,
\newblock arXiv:2605.04859, 2026.

\bibitem{Tevelev2001}
E.~A.~Tevelev,
\newblock Isotropic subspaces of polylinear forms,
\newblock \emph{Math. Notes} \textbf{69} (2001), nos.~5--6, 845--852,
\newblock doi:10.1023/A:1010294818389.

\bibitem{Anzaldo2026}
L.~B.~Anzaldo,
\newblock Isotropic subspaces of Schur modules,
\newblock \emph{Comm. Algebra} \textbf{54} (2026), no.~11, 3544--3553,
\newblock doi:10.1080/00927872.2025.2602887.

\bibitem{EberhardSabatini2025}
S.~Eberhard and L.~Sabatini,
\newblock Probabilistic construction of some extremal $p$-groups,
\newblock \emph{J. Algebra} \textbf{682} (2025), 463--480,
\newblock doi:10.1016/j.jalgebra.2025.06.013.

\bibitem{Qiao2023}
Y.~Qiao,
\newblock Tur\'an and Ramsey problems for alternating multilinear maps,
\newblock \emph{Discrete Anal.} (2023), Paper No.~12, 22 pp.,
\newblock doi:10.19086/da.84736.

\bibitem{ConlonPohoataZakharov2021}
D.~Conlon, C.~Pohoata, and D.~Zakharov,
\newblock Random multilinear maps and the Erd\H{o}s box problem,
\newblock \emph{Discrete Anal.} (2021), Paper No.~17, 8 pp.,
\newblock doi:10.19086/da.28336.

\bibitem{FulmanGoldstein2015}
J.~Fulman and L.~Goldstein,
\newblock Stein's method and the rank distribution of random matrices over finite fields,
\newblock \emph{Ann. Probab.} \textbf{43} (2015), no.~3, 1274--1314,
\newblock doi:10.1214/13-AOP889.

\bibitem{BGH1987}
J.~Buhler, R.~Gupta, and J.~Harris,
\newblock Isotropic subspaces for skewforms and maximal abelian subgroups of $p$-groups,
\newblock \emph{J. Algebra} \textbf{108} (1987), no.~1, 269--279,
\newblock doi:10.1016/0021-8693(87)90138-4.

\bibitem{GhorpadePatilPillai2009}
S.~R.~Ghorpade, A.~R.~Patil, and H.~K.~Pillai,
\newblock Decomposable subspaces, linear sections of Grassmann varieties, and higher weights of Grassmann codes,
\newblock \emph{Finite Fields Appl.} \textbf{15} (2009), no.~1, 54--68,
\newblock doi:10.1016/j.ffa.2008.08.001.

\bibitem{Kinser2011}
R.~Kinser,
\newblock New inequalities for subspace arrangements,
\newblock \emph{J. Combin. Theory Ser. A} \textbf{118} (2011), no.~1, 152--161,
\newblock doi:10.1016/j.jcta.2009.10.014.

\bibitem{ScottWilmer2021}
A.~Scott and E.~Wilmer,
\newblock Combinatorics in the exterior algebra and the Bollob\'as Two Families Theorem,
\newblock \emph{J. Lond. Math. Soc.} \textbf{104} (2021), no.~4, 1812--1839,
\newblock doi:10.1112/jlms.12484.

\end{thebibliography}
\end{document}